\definecolor{darkred}{rgb}{0.7,0.1,0.1}
\definecolor{darkblue}{rgb}{0.1,0.1,0.4}
\definecolor{darkgrey}{rgb}{0.5,0.5,0.5}
\numberwithin{equation}{section}
\theoremstyle{plain}
\newtheorem{thm}{Theorem}[section]
\newtheorem{lem}[thm]{Lemma}
\newtheorem{prop}[thm]{Proposition}
\newtheorem{cor}[thm]{Corollary}
\newtheorem{definition}[thm]{Definition}
\theoremstyle{remark}
\newtheorem{remark}[thm]{Remark}
\theoremstyle{plain}
\newtheorem{hypothesis}[thm]{Hypothesis}
\newcommand{\hyp}[1]{$C^{2}$-hypersurface as in Definition~\ref{definition_hypersurface}}
\newcommand{\sign}{\mathrm{sign}\,}   
\DeclareMathOperator\ran{ran}
\newcommand{\dom}{\mathrm{dom}\,}
\begin{document}
\title[]{ On two-dimensional Dirac operators with $\delta$-shell interactions supported on unbounded curves with straight ends}


\author[J. Behrndt]{Jussi Behrndt}
\address{Institut f\"{u}r Angewandte Mathematik\\
Technische Universit\"{a}t Graz\\
 Steyrergasse 30, 8010 Graz, Austria\\
E-mail: {\tt behrndt@tugraz.at}}

\author[P. Exner]{Pavel Exner}
\address{Doppler Institute for Mathematical Physics and Applied Mathematics\\ 
Czech Technical University in Prague\\ B\v{r}ehov\'{a} 7, 11519 Prague, Czech Republic,
{\rm and}
Department of Theoretical Physics\\
Nuclear Physics Institute, Czech Academy of Sciences, 
25068 \v{R}e\v{z}, Czech Republic\\
E-mail: {\tt exner@ujf.cas.cz}
}

\author[M. Holzmann]{Markus Holzmann}
\address{Institut f\"{u}r Angewandte Mathematik\\
Technische Universit\"{a}t Graz\\
 Steyrergasse 30, 8010 Graz, Austria\\
E-mail: {\tt holzmann@math.tugraz.at}}

\author[M. Tu\v{s}ek]{Mat\v{e}j Tu\v{s}ek}
\address{Department of Mathematics, Faculty of Nuclear Sciences and Physical Engineering\\
Czech Technical University in Prague \\
Trojanova 13, 120 00, Prague \\
E-mail: {\tt matej.tusek@fjfi.cvut.cz}}

\keywords{Dirac operator, singular interaction, essential spectrum, geometrically induced bound states}

\subjclass[2020]{Primary 81Q10; Secondary 35Q40} 
\maketitle

\begin{abstract}
  In this paper we study the self-adjointness and spectral properties of two-dimensional Dirac operators with electrostatic, Lorentz scalar, and anomalous magnetic $\delta$-shell interactions with constant weights that are supported on a smooth unbounded curve that is straight outside a compact set and whose ends are rays that are not parallel to each other. For all possible combinations of interaction strengths we describe the self-adjoint realizations and compute their essential spectra. Moreover, we prove in different situations the existence of geometrically induced discrete eigenvalues.
\end{abstract}

\section{Introduction}

Operators with contact interactions are frequently used as quantum mechanical Hamiltonians. True, as models of real physical systems they are idealized but often they provide a description which preserves the salient features of the system one wants to describe while being mathematically more accessible than their `regular' counterparts. In nonrelativistic quantum mechanics there is wealth of such models; for a survey and bibliography we refer to the monograph \cite{AGHH05}. The motivation to investigate Dirac operators with contact interactions was weaker for a long time; the only system of that type appeared in connection with the so-called MIT bag model \cite{CJJTW74}. This changed with the advent of graphene and similar materials, in which the effective dynamics is described by the Dirac equation, see, e.g., \cite{Be08}, and the systems of interest may have many geometric features depending on their experimental design.

The goal in this paper is to study the self-adjointness and the spectrum of a Dirac operator in $\mathbb{R}^2$ with a combination of electrostatic, Lorentz scalar, and anomalous magnetic $\delta$-shell potentials supported on an unbounded curve $\Gamma$ that is straight outside a compact set and such that its ends are not parallel to each other. Such an operator is associated with the formal differential expression
\begin{equation} \label{eq:formal_expr}
  \mathcal{A}_{\eta, \tau, \lambda}^{m, c} := -i c (\sigma \cdot \nabla) + m c^2 \sigma_3 + (\eta \sigma_0 + \tau \sigma_3 + \lambda i (\sigma_1 \nu_1 + \sigma_2 \nu_2) \sigma_3) \delta_\Gamma,
\end{equation}
where $c>0$ is the speed of light, $m\geq 0$ is the mass of the particle (note that we will later consider $m\in\mathbb{R}$), $\delta_\Gamma$ stands for the $\delta$- or single layer distribution supported on $\Gamma$ and $\nu = (\nu_1, \nu_2)$ is a unit normal vector field along $\Gamma$; finally, $\eta,\tau,\lambda\in\mathbb R$ are coupling constants for the electrostatic, Lorentz scalar, and anomalous magnetic interaction, respectively. It is known that with these three parameters one can define a general symmetric $\delta$-interaction with constant coefficients for the two-dimensional Dirac operator; a fourth possible parameter in the $2 \times 2$-matrix-valued coefficient of $\delta_\Gamma$ can be gauged away by a unitary transformation, see  \cite{BHT23, CLMT21}.

Operators of the form $\mathcal{A}_{\eta, \tau, \lambda}^{m, c}$ were first studied in the one-dimensional case \cite{GS87}, in which the singular interaction is supported on a point in $\mathbb{R}$. By using this analysis and a decomposition to spherical harmonics, the three-dimensional counterpart of the operator in~\eqref{eq:formal_expr} was investigated in the end of the 1980s with the interaction support being a sphere \cite{DES89}. It took then 25 years, until more general bounded $C^2$-surfaces in $\mathbb{R}^3$ were considered as supports of purely electrostatic interactions in \cite{AMV14}. After that the interest on the operators induced by \eqref{eq:formal_expr} grew, the more general interactions including Lorentz scalar and anomalous magnetic terms were introduced, and the self-adjointness, the spectral and scattering properties were studied for bounded interaction supports in dimensions two and three, see \cite{AMV15, BEHL18, BEHL19_1, BHOP20, CLMT21}, to mention just a few of references, and the recent review \cite{BHSS22} for more references.  The case of unbounded interaction supports was not studied that extensively, despite the fact that for the case of the straight line in $\mathbb{R}^2$ interesting spectral transitions were observed in \cite{BHT22, BHT23}. We also mention the paper \cite{Ben21}, where the self-adjointness and spectral properties for combinations of electrostatic and Lorentz scalar interactions supported on surfaces in $\mathbb{R}^3$, which are compact perturbations of the plane, were investigated, the works \cite{FL23, PvdB22, R21, R22} about the self-adjointness of the operator in~\eqref{eq:formal_expr} under different assumptions on the interaction support including non-smooth broken lines, and the recent paper \cite{FHL23}, where the spectrum of the operator $\mathcal{A}_{0, \tau, 0}^{m, c}$ was studied, when $\Gamma$ is a broken line.

It is the main goal in this paper to treat the two-dimensional Dirac operator in~\eqref{eq:formal_expr} with a singular potential that is supported on a smooth unbounded curve in $\mathbb{R}^2$ that is straight outside a compact set in a systematic way. As for bounded interaction supports, the operator associated with $\mathcal{A}_{\eta, \tau, \lambda}^{m, c}$ lacks semi-boundedness and hence, form methods are not suitable to introduce it as a self-adjoint operator. Therefore, a different technique from extension theory of symmetric operators will be applied. We follow closely the approach in \cite{BHSS22}, where two- and three-dimensional Dirac operators with singular potentials supported on bounded interaction supports were studied with the help of boundary triples and their Weyl functions. More precisely, we construct a suitable ordinary boundary triple, which allows us to introduce $\mathcal{A}_{\eta, \tau, \lambda}^{m, c}$ as a self-adjoint operator for all combinations of the interaction strengths $\eta, \tau, \lambda\in\mathbb R$ and to identify the essential spectrum. In a similar way as it is known for bounded interaction supports, it turns out that there are special combinations of the interaction strengths $\eta, \tau, \lambda$ for which there is a lack of Sobolev regularity in the operator domain. Moreover, as in the case of the straight line \cite{BHT22, BHT23}, unexpected transitions in the essential spectra appear in several situations.

Finally, we are going to show for particular geometries of the interaction support and/or parameter values the existence of isolated eigenvalues. Relations between spectral properties of differential operators and the geometry belong to the most traditional questions, and one of the intriguing examples is the fact that geometrically induced bound states may exist even in regions which are spatially infinitely extended. A classical example is the Dirichlet Laplacian in strips around a bent reference curve, for which we refer to the monograph \cite{EK15} and the bibliography therein. The same effect was observed for two-dimensional Schr\"odinger operators with attractive
$\delta$-potentials supported on unbounded curves \cite{EI01, P17}, and recently also for Dirac operators
with Lorentz scalar $\delta$-shell interactions on broken lines \cite{FHL23}.
We will prove that also for smooth unbounded interaction supports $\Gamma$ that are straight outside a compact set geometrically induced eigenvalues may exist. We do this first 
in the case, where the electrostatic and the Lorentz scalar interaction strengths have the same negative value, no magnetic term is present, and the speed of light $c$ is large by using the nonrelativistic limit of the Dirac operator in combination with the known result about the corresponding Schrödinger operator from \cite{EI01}. Secondly, we prove the existence of geometrically induced bound states for attractive purely Lorentz scalar interactions that are supported on a compact perturbation of a broken line with a sufficiently small opening angle by using the claim proved in the paper \cite{FHL23}. While such existence results are of obvious interest, they lack the universal character of their nonrelativistic counterparts mentioned above, and their extension to a wider class of operators represents an important challenge.

The paper is organized as follows: In Section~\ref{section_main_results} we state and discuss the main results of this paper in detail. Then, in Section~\ref{section_preliminaries} we provide some preliminary material that is necessary to prove the main results in Sections~\ref{section_proof} and~\ref{section_nonrelativistic_limit}. Finally, in Appendix~\ref{appendix_bt} we recall some useful abstract results about ordinary boundary triples, and in Appendix~\ref{appendix_int_op} we prove a technical statement about compactness properties of integral operators.

\subsection*{Notations}

Let 
\begin{equation} \label{def_Pauli_matrices}
   \sigma_1 := \begin{pmatrix} 0 & 1 \\ 1 & 0 \end{pmatrix}, \qquad
   \sigma_2 := \begin{pmatrix} 0 & -i \\ i & 0 \end{pmatrix}, \qquad
   \sigma_3 := \begin{pmatrix} 1 & 0 \\ 0 & -1 \end{pmatrix},
\end{equation}
be the Pauli spin matrices and denote by $\sigma_0$ the $2 \times 2$-identity matrix. Note that the Pauli matrices satisfy 
\begin{equation} \label{anti_commutation}
  \sigma_j \sigma_k + \sigma_k \sigma_j = 2 \delta_{jk} \sigma_0, \quad j,k \in \{ 1,2,3\}.
\end{equation}
For $x = (x_1,x_2)$ we will often write $\sigma \cdot x = \sigma_1 x_1 + \sigma_2 x_2$ and $\sigma \cdot \nabla = \sigma_1 \partial_1 + \sigma_2 \partial_2$.

Let $\Omega$ be an open set.
The $L^2$-based Sobolev spaces of $k$ times weakly differentiable functions are denoted by $H^k(\Omega; \mathbb{C})$ and we write $H^k(\Omega; \mathbb{C}^2) := H^k(\Omega; \mathbb{C}) \otimes \mathbb{C}^2$. Moreover, if $\Gamma$ is an infinite curve that is straight outside a compact set as in~\eqref{arc_length_Gamma_intro} with arc-length parametrization $\gamma: \mathbb{R} \rightarrow \mathbb{R}^2$,
then we define the mapping
\begin{equation} \label{def_U}
  U_\gamma: \mathcal{S}'(\Gamma; \mathbb{C}) \rightarrow \mathcal{S}'(\mathbb{R}; \mathbb{C}), \quad (U_\gamma f)(\varphi) = f(\varphi( \gamma^{-1}(\cdot))).
\end{equation}
Clearly, if $f \in L^1(\Gamma; \mathbb{C})$, then the distribution $U_\gamma f \in L^1(\mathbb{R}; \mathbb{C})$ is generated by $f\circ \gamma$.
With the help of $U_\gamma$ we introduce the Sobolev space of order $s \in \mathbb{R}$ on $\Gamma$  by
\begin{equation*}
  H^s(\Gamma) := \big\{f \in \mathcal{S}'(\Gamma; \mathbb{C}):  U_\gamma f \in H^s(\mathbb{R}; \mathbb{C}) \big\}.
\end{equation*}
In the main part of the paper we will often use the mapping $\Lambda$ defined by
\begin{equation} \label{def_Lambda}
  (\mathcal{F} U_\gamma \Lambda f)(p) = \sqrt[4]{1+p^2} \, (\mathcal{F} U_\gamma f)(p), \quad  p \in \mathbb{R},
\end{equation}
where $\mathcal{F}$ is the Fourier transform in $\mathcal{S}'(\mathbb{R}; \mathbb{C})$.
Evidently, for any $s \in \mathbb{R}$ the operator $\Lambda: H^s(\Gamma; \mathbb{C}) \rightarrow H^{s-1/2}(\Gamma; \mathbb{C})$ is well defined and bijective and for $s=\frac{1}{2}$ 
it can be viewed as an unbounded self-adjoint operator in $L^2(\Gamma; \mathbb{C})$ with dense domain $H^{1/2}(\Gamma; \mathbb{C})$.

If $\mathcal{H}$ is a Hilbert space and $A$ is a closed operator in $\mathcal{H}$, then we denote its domain of definition, its range, and its kernel by $\dom A$, $\ran A$, and $\ker A$, respectively. Furthermore, we use the symbols $\sigma(A)$, $\sigma_\textup{p}(A)$, and $\rho(A)$ for the spectrum, the point spectrum, and the resolvent set of $A$, respectively. If $A$ is self-adjoint, then $\sigma_{\textup{disc}}(A)$ and $\sigma_{\textup{ess}}(A)$ denote the discrete and the essential spectrum of~$A$, respectively.

\section{Main results} \label{section_main_results}

\subsection{Definition of $A_{\eta, \tau, \lambda}^{m, c}$ and confinement}

Throughout the paper, we assume that the support of the singular interaction is a smooth curve that does not intersect with itself and that is straight outside a compact set. To make this mathematically more precise, we make the following assumption:

\begin{hypothesis} \label{hypothesis_Gamma}
Assume that $\gamma: \mathbb{R} \rightarrow \mathbb{R}^2$ is a $C^\infty$-smooth and injective map that satisfies $|\dot{\gamma}(s)| = 1$ for all $s \in \mathbb{R}$ and 
\begin{equation} \label{arc_length_Gamma_intro}
  \gamma(s) = \begin{cases} (a_- s + b_-, c_- s + d_-) &\text{ for } s < -M, \\ (a_+ s + b_+, c_+ s + d_+) &\text{ for } s > M, \end{cases}
\end{equation}
for suitable constants $M>0$ and $a_\pm, b_\pm, c_\pm, d_\pm \in \mathbb{R}$ with $a_\pm^2 + c_\pm^2 = 1$ and $(a_-, c_-) \neq -(a_+, c_+)$. We set $\Gamma := \ran \gamma$ and denote by $\nu(x)$ the unit normal vector at $x = \gamma(s) \in \Gamma$, $s \in \mathbb{R}$, that is given by $\nu(\gamma(s)) = (\dot{\gamma}_2(s), -\dot{\gamma}_1(s))$. The set $\Omega_+$ is the domain with boundary $\Gamma$ such that $\nu$ is pointing outwards of $\Omega_+$ and we define $\Omega_- := \mathbb{R}^2 \setminus \overline{\Omega_+}$.
\end{hypothesis}

If not mentioned differently, we always assume that $\Gamma$ satisfies the above hypothesis. The condition $(a_-, c_-) \neq -(a_+, c_+)$ in Hypothesis~\ref{hypothesis_Gamma} means that the two asymptotes of $\Gamma$ are not parallel rays (of the same orientation) as $t\to\pm\infty$. Note that for $\gamma$ in Hypothesis~\ref{hypothesis_Gamma} there exists $C_1 > 0$ such that for all $s, t \in \mathbb{R}$ 
\begin{equation} \label{equation_bi_Lipschitz}  
  C_1 |s - t| \leq |\gamma(s) - \gamma(t)| \leq |s - t|
\end{equation}
holds. The second inequality in \eqref{equation_bi_Lipschitz} is clear. To see the first one consider the function $(s,t)\mapsto r(s,t):=\frac{|\gamma(s)-\gamma(t)|}{|s-t|}$, which is positive and continuous, being extended to $r(s,s)=1$ as $\Gamma$ is smooth by assumption. In the first and the third quadrant of the $(s,t)$ plane we have $r(s,t)=1$ for all $s,t$ large enough. Moreover, for any $a\in[0,+\infty)$,
$$\lim_{s\to\pm\infty}\inf_{\pm t\in[0,a]}r(s,t)=\lim_{s\to\pm\infty}\frac{|\gamma(s)|}{|s|}=1,$$
and similarly with the roles of $s$ and $t$ interchanged. 
In the other two quadrants we have $r(s,t)\leq 1$ and it is not difficult to check that
\begin{equation*}   
\lim_{|s|\to\infty} r(s,-\mu s) = \frac{\sqrt{1+\mu^2-2\mu\cos\theta}}{1+\mu}
\end{equation*}
holds for $\mu>0$, where $\theta$ is the angle between the asymptotes; the right-hand side reaches its minimum $\sin \frac{\theta}{2}$ at the axis of the quadrants, $\mu=1$. We conclude that there is a compact subset $M$ of the plane outside of which we have $r(s,t)>\frac12 \sin \frac{\theta}{2}$. In view of the compactness, the minimum is attained within $M$ and $C_2:= \min_M r(s,t)>0$, because the curve does not intersect itself; it is then enough to choose $C_1=\min\{C_2,\frac12 \sin \frac{\theta}{2}\}$.  

In the following we use the notation $f_\pm := f\upharpoonright \Omega_\pm$ for a function $f$ that is defined on $\mathbb{R}^2$. Recall that $\sigma_1, \sigma_2, \sigma_3$ are the $\mathbb{C}^{2 \times 2}$-valued Pauli spin matrices defined in~\eqref{def_Pauli_matrices} and $\sigma_0$ is the $2\times 2$ identity matrix. 
 We define the spaces
\begin{equation*}
  H(\sigma, \Omega_\pm) := \big\{ f_\pm \in L^2(\Omega_\pm; \mathbb{C}^2): (\sigma\cdot \nabla) f_\pm \in L^2(\Omega_\pm; \mathbb{C}^2) \big\}
\end{equation*}
and note that for $f_\pm \in H(\sigma; \Omega_\pm)$ the trace $T^D_\pm f_\pm$ exists in the Sobolev space $H^{-1/2}(\Gamma; \mathbb{C}^2)$; cf. Section~\ref{section_A_0} for details. This allows us to define for $m, \eta, \tau, \lambda \in \mathbb{R}$ and $c > 0$ in $L^2(\mathbb{R}^2; \mathbb{C}^2)$ the operator
\begin{equation} \label{def_A_eta_tau_intro}
  \begin{split}
    &\, \, \, \, \, \, \, \,  \, A_{\eta, \tau, \lambda}^{m, c} f := \big(-i c (\sigma \cdot \nabla) + m c^2 \sigma_3 \big) f_+ \oplus \big(-i c (\sigma \cdot \nabla) + m c^2 \sigma_3 \big) f_-, \\
    &\dom A_{\eta, \tau, \lambda}^{m, c} := \bigg\{ f = f_+ \oplus f_- \in H(\sigma, \Omega_+) \oplus H(\sigma, \Omega_-): \\
    &\quad  -i c (\sigma \cdot \nu) (T^D_+f_+ - T^D_- f_-) = \frac{1}{2} (\eta \sigma_0 + \tau \sigma_3 + \lambda i (\sigma \cdot \nu) \sigma_3) (T^D_+ f_+ + T^D_- f_-) \bigg\}.
  \end{split}
\end{equation}
Using integration by parts, it can be seen that the operator in \eqref{def_A_eta_tau_intro} is the rigorous mathematical definition of the formal expression in \eqref{eq:formal_expr}, see, e.g., \cite{BoPi_21}. 

A remarkable property of $A_{\eta, \tau, \lambda}^{m, c}$ is that it decomposes for special combinations of $\eta, \tau, \lambda$ to the orthogonal sum of two Dirac operators in $L^2(\Omega_\pm; \mathbb{C}^2)$ with boundary conditions. This phenomenon, which was observed originally in the three-dimensional setting and later in the current two-dimensional case (but always with  bounded interaction supports) in \cite{AMV15, BEHL19_1, BHOP20, BHSS22, CLMT21, DES89}, is known as confinement and means that the $\delta$-potential is, in this case, impenetrable for the quantum particle. The mentioned boundary conditions are known as quantum dot boundary conditions and play an important role in the mathematical analysis of graphene \cite{BFSV17}. Note that in all studied cases the confinement occurs if and only if the parameter
\begin{equation} \label{eq:d_def}
d:=\eta^2 - \tau^2 - \lambda^2
\end{equation}
satisfies $d  = -4 c^2$ and the same is also true in the current setting.

\begin{prop} \label{prop_confinement}
  Let $\Gamma$ be as in Hypothesis~\ref{hypothesis_Gamma}, $m, \eta, \tau, \lambda \in \mathbb{R}$ and $c >0$. Then, the following is true:
  \begin{itemize}
    \item[(i)] If $d \neq -4 c^2$, then there exists a map $Q_{\eta, \tau, \lambda}: \Gamma \rightarrow \mathbb{C}^{2 \times 2}$ depending on the parameters $\eta,\tau,\lambda$ such that $Q_{\eta, \tau, \lambda}$ is pointwise invertible and $f \in \dom A_{\eta,\tau,\lambda}^{m, c}$ if and only if $f \in H(\sigma; \Omega_+) \oplus H(\sigma; \Omega_-)$ and
    \begin{equation*} 
      T^D_+ f_+ = Q_{\eta, \tau, \lambda} T^D_- f_-.
    \end{equation*}
    \item[(ii)] If $d = -4 c^2$, then $A_{\eta, \tau, \lambda}^{m, c} = A_{\eta,\tau,\lambda}^+ \oplus A_{\eta,\tau,\lambda}^-$, where $A_{\eta,\tau,\lambda}^\pm$ are the operators acting in $L^2(\Omega_\pm; \mathbb{C}^2)$ given by
    \begin{equation*} 
      \begin{split}
        A_{\eta,\tau,\lambda}^\pm f &= \big(-i c (\sigma \cdot \nabla) + m c^2 \sigma_3 \big) f, \\
        \dom A_{\eta,\tau,\lambda}^\pm &= \big\{ f \in H(\sigma; \Omega_\pm): \big( 2 c \sigma_0 \mp i (\sigma \cdot \nu) (\eta \sigma_0 + \tau \sigma_3 +  \lambda i(\sigma \cdot \nu) \sigma_3) \big) T^D_\pm f = 0 \big\}.
      \end{split}
    \end{equation*}
  \end{itemize}
\end{prop}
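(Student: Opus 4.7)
The plan is to rewrite the transmission condition in \eqref{def_A_eta_tau_intro} in a symmetric form and then read off both statements by pointwise linear algebra on $\Gamma$. Setting $\varphi_\pm := T^D_\pm f_\pm$ and, for each $x\in\Gamma$,
\[ M(x) := \eta\sigma_0 + \tau\sigma_3 + \lambda\, i(\sigma\cdot\nu(x))\sigma_3, \qquad R_\pm(x) := -2ic(\sigma\cdot\nu(x)) \mp M(x), \]
a quick manipulation shows that the boundary condition in $\dom A_{\eta,\tau,\lambda}^{m,c}$ is equivalent to $R_-(x)\varphi_+(x) = R_+(x)\varphi_-(x)$. Using $(\sigma\cdot\nu)^2=\sigma_0$, $\{\sigma_3,\sigma\cdot\nu\}=0$, and $|\nu|=1$, a direct $2\times 2$ computation yields $\det R_\pm(x) = 4c^2 + d$ at every $x\in \Gamma$; since $M$ is Hermitian while $-2ic(\sigma\cdot\nu)$ is anti-Hermitian, one also has $R_\pm^* = -R_\mp$.

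If $d\neq -4c^2$, then $R_\pm(x)$ is invertible at every $x\in \Gamma$, so $Q_{\eta,\tau,\lambda}(x) := R_-(x)^{-1}R_+(x)$ is well defined and pointwise invertible, and the condition $R_-\varphi_+ = R_+\varphi_-$ rewrites as $\varphi_+ = Q_{\eta,\tau,\lambda}\varphi_-$, giving (i).

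In the confinement case $d = -4c^2$, both $R_\pm(x)$ have rank exactly one (a nonzero Hermitian matrix cannot equal an anti-Hermitian one, and $M\neq 0$ because $\tau^2+\lambda^2\geq 4c^2$), so all kernels and ranges are one-dimensional. The crux is that $\ran R_-(x)\cap \ran R_+(x)=\{0\}$: any $v\in \ker R_-(x)\cap \ker R_+(x)$ satisfies $(R_-+R_+)v = -4ic(\sigma\cdot\nu)v = 0$, hence $v=0$ by invertibility of $\sigma\cdot\nu$, so the two one-dimensional kernels span $\mathbb{C}^2$; combined with $R_\pm^* = -R_\mp$, the ranges are the orthogonal complements of these kernels and intersect only at $0$. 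The condition $R_-\varphi_+ = R_+\varphi_-$ therefore forces $R_-(x)\varphi_+(x)=0$ and $R_+(x)\varphi_-(x) = 0$ separately at each $x\in \Gamma$. The pointwise identities $2c\sigma_0\mp i(\sigma\cdot\nu)M = i(\sigma\cdot\nu)R_\mp$, immediate from $(\sigma\cdot\nu)^2=\sigma_0$, translate these decoupled conditions verbatim into the quantum-dot boundary conditions appearing in the statement. Since the Dirac action and the spaces $H(\sigma;\Omega_\pm)$ are already block-diagonal with respect to $L^2(\mathbb{R}^2;\mathbb{C}^2) = L^2(\Omega_+;\mathbb{C}^2)\oplus L^2(\Omega_-;\mathbb{C}^2)$, this gives $A_{\eta,\tau,\lambda}^{m,c} = A_{\eta,\tau,\lambda}^+\oplus A_{\eta,\tau,\lambda}^-$, which is (ii).

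The only real bookkeeping obstacle is that the traces $\varphi_\pm$ a priori lie in $H^{-1/2}(\Gamma;\mathbb{C}^2)$, so the pointwise matrix manipulations above need to be interpreted distributionally; however, since $R_\pm$ has smooth entries in $x$ and constant rank along $\Gamma$, multiplication (and, in case (i), inversion) by these matrices preserves Sobolev regularity, and no analytic difficulty appears. The heart of the argument is thus a purely algebraic calculation, and the compact form $R_\pm$ is designed to carry it out cleanly.
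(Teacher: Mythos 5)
Your proposal follows exactly the route the paper has in mind (the proof is omitted and delegated to the cited references, e.g.\ \cite[Theorem 5.5]{AMV14}-type arguments in \cite{AMV15, BHOP20, BHSS22, CLMT21}): rewrite the transmission condition as a pointwise matrix identity, compute $\det R_\pm = 4c^2+d$, invert in the non-critical case, and use the rank-one structure plus $R_\pm^*=-R_\mp$ to decouple when $d=-4c^2$. The substance is correct, but two labelling slips should be fixed: with your definition $R_\pm=-2ic(\sigma\cdot\nu)\mp M$, the transmission condition in \eqref{def_A_eta_tau_intro} reads $R_+\varphi_+=R_-\varphi_-$ (not $R_-\varphi_+=R_+\varphi_-$), and the correct matrix identity is $2c\sigma_0\mp i(\sigma\cdot\nu)M=i(\sigma\cdot\nu)R_\pm$ (not $R_\mp$); indeed $i(\sigma\cdot\nu)R_-=2c\sigma_0+i(\sigma\cdot\nu)M$. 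These two swaps compensate each other, so your final quantum-dot conditions agree with the statement, but as written the intermediate formulas are inconsistent; accordingly, in (i) the matrix is $Q_{\eta,\tau,\lambda}=R_+^{-1}R_-$.

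One more point deserves a sentence rather than a wave of the hand: in (ii) the identity $R_+\varphi_+=R_-\varphi_-$ holds in $H^{-1/2}(\Gamma;\mathbb{C}^2)$, so the pointwise conclusion ``both sides vanish'' must be implemented distributionally. This is easily done: since $R_\pm(x)$ depend on $x$ only through $\nu(x)$, which is smooth and constant outside a compact set, and since $\ran R_+(x)\oplus\ran R_-(x)=\mathbb{C}^2$ at every point, the (oblique) projection $P(x)$ onto $\ran R_+(x)$ along $\ran R_-(x)$ is a smooth matrix function with bounded derivatives of all orders; multiplication by $P$ and $\sigma_0-P$ is then bounded on $H^{-1/2}(\Gamma;\mathbb{C}^2)$ and yields $R_+\varphi_+=0$ and $R_-\varphi_-=0$ as elements of $H^{-1/2}(\Gamma;\mathbb{C}^2)$. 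With that justification (and the sign labels corrected), your argument is a complete proof along the same lines as the references the paper points to.
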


The proof of this result can be done in exactly the same way as, e.g., in \cite[Theorem~5.5]{AMV15}, \cite[Lemma~4.1]{BHOP20}, \cite[Lemma~5.11]{BHSS22}, or \cite[Theorem~2.3]{CLMT21}, and hence, we omit it in this paper.

\subsection{Self-adjointness and essential spectrum of $A_{\eta, \tau, \lambda}^{m, c}$}

In the following theorem the self-adjointness and the essential spectrum of $A_{\eta, \tau, \lambda}^{m, c}$ are discussed. Note that, in a similar way as it was observed in \cite{BH19, BHOP20, BHSS22, CLMT21, OV18} for compact interaction supports, there is a loss of Sobolev regularity in the operator domain in the case of critical interaction strengths $(\tfrac{d}{4} - c^2)^2 - \lambda^2 c^2 = 0$. Moreover, in the particular case of $\Gamma$ being a straight line the spectrum of $A_{\eta, \tau, \lambda}^{m, c}$ was computed in \cite{BHT23}; Theorem~\ref{theorem_intro} says that the essential spectrum is preserved under the geometric perturbations as long as $\Gamma$ is straight outside a compact set. We point out that to the best of our knowledge item~(i) of the following theorem on the Sobolev regularity in $\dom A_{\eta, \tau, \lambda}^{m, c}$ was not known in this generality before even for the straight line.

\begin{thm} \label{theorem_intro}
  Let $m, \eta, \tau, \lambda \in \mathbb{R}$, $c > 0$, $d$ be given in \eqref{eq:d_def}, and $\Gamma$ be as in Hypothesis~\ref{hypothesis_Gamma}. Then, the operator $A_{\eta, \tau, \lambda}^{m, c}$ is self-adjoint in $L^2(\mathbb{R}^2; \mathbb{C}^2)$ and the following holds:
  \begin{itemize}
    \item[(i)] For the domain of $A_{\eta, \tau, \lambda}^{m, c}$ one has:
    \begin{itemize}
      \item[(a)] If $(\tfrac{d}{4} - c^2)^2 - \lambda^2 c^2 \neq 0$, then $\dom A_{\eta, \tau, \lambda}^{m, c} \subset H^1(\mathbb{R}^2 \setminus \Gamma; \mathbb{C}^2)$.
      \item[(b)] If $(\tfrac{d}{4} - c^2)^2 - \lambda^2 c^2 = 0$, then $\dom A_{\eta, \tau, \lambda}^{m, c} \not\subset H^s(\mathbb{R}^2 \setminus \Gamma; \mathbb{C}^2)$ for all $s > 0$.
    \end{itemize}
    \item[(ii)] For the essential spectrum of $A_{\eta, \tau, \lambda}^{m, c}$ one has:
    \begin{itemize}
      \item[(a)] If $d=4 c^2$ and $\lambda \neq 0$, then 
      \begin{equation*}
        \sigma_{\textup{ess}}(A_{\eta, \tau, \lambda}^{m, c}) = \sigma(A_{\eta, \tau, \lambda}^{m, c}) = \mathbb{R}.
      \end{equation*}
      \item[(b)] If $d=4 c^2$ and $\lambda = 0$, then 
      \begin{equation*}
        \sigma_{\textup{ess}}(A_{\eta, \tau, \lambda}^{m, c}) = (-\infty, -|m| c^2] \cup \left\{ -\frac{\tau}{\eta} m c^2 \right \} \cup [|m| c^2, +\infty).
      \end{equation*}
      \item[(c)] If $d \neq 4 c^2$, then $$(-\infty, -|m| c^2] \cup [|m| c^2, +\infty) \subset \sigma_{\textup{ess}}(A_{\eta, \tau, \lambda}^{m, c})$$ and 
      $\sigma_\textup{ess}(A_{\eta, \tau, \lambda}^{m, c}) \cap (-|m| c^2, |m| c^2)$ coincides with the set 
      \begin{equation*}
         \overline{\left\{ z_\pm(k): (d - 4c^2) \left(\frac{\eta}{c^2} z_\pm(k) + \lambda k + \tau m\right) > 0 \right\}} \cap (-|m| c^2, |m| c^2),
      \end{equation*}
      where $z_\pm(k)$, $k \in \mathbb{R}$, is given by 
      \begin{equation*} 
        \begin{split}
          z_\pm(k) &= \frac{1}{\tfrac{\eta^2}{c^2} + \big( \tfrac{d}{4 c^2} - 1 \big)^2} \bigg( -\eta (\lambda k + \tau m) \\
          &\,\, \pm \left| \frac{d}{4 c^2}- 1 \right| \sqrt{\big( \tau^2 c^2 + \big( \tfrac{d}{4} + c^2 \big)^2 \big) k^2 - 2 \lambda \tau m k c^2 + \big( \lambda^2 c^2  + \big(  \tfrac{d}{4} + c^2 \big)^2 \big) m^2} \bigg).
        \end{split}
      \end{equation*}
    \end{itemize}
  \end{itemize}
  Moreover, if $\Gamma$ is the straight line, then $\sigma(A_{\eta, \tau, \lambda}^{m, c})=\sigma_\textup{ess}(A_{\eta, \tau, \lambda}^{m, c})$, i.e. the spectrum of $A_{\eta, \tau, \lambda}^{m, c}$ is fully characterized by \textup{(ii)} in this case.
\end{thm}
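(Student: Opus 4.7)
The plan is to apply the abstract boundary triple framework of Appendix~\ref{appendix_bt}, following the blueprint developed in~\cite{BHSS22} for bounded interaction supports. Let $S$ be the closure of the Dirac operator $-ic(\sigma\cdot\nabla)+mc^2\sigma_3$ defined on $C_0^\infty(\mathbb{R}^2\setminus\Gamma;\mathbb{C}^2)$; then $S$ is densely defined and symmetric, and its adjoint $S^*$ acts componentwise as the free Dirac operator on $H(\sigma,\Omega_+)\oplus H(\sigma,\Omega_-)$. I construct an ordinary boundary triple $(L^2(\Gamma;\mathbb{C}^2),\Gamma_0,\Gamma_1)$ for $S^*$ whose components are, up to rescaling by powers of the self-adjoint operator $\Lambda$ from~\eqref{def_Lambda}, the jump and the average of the two-sided Dirichlet traces $T^D_\pm f_\pm$. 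In this language the boundary condition defining $\dom A_{\eta,\tau,\lambda}^{m,c}$ becomes a bounded self-adjoint parameter $B=B(\eta,\tau,\lambda,c)$, and the associated Weyl function $M(z)$ is the boundary integral operator whose kernel arises by restricting the free Dirac resolvent kernel (built from modified Bessel functions $K_0,K_1$) to $\Gamma\times\Gamma$.

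\textbf{Self-adjointness and regularity.} The abstract criterion recalled in Appendix~\ref{appendix_bt} reduces the self-adjointness of $A_{\eta,\tau,\lambda}^{m,c}$ to the invertibility of $B-M(z)$ in $L^2(\Gamma;\mathbb{C}^2)$ for some $z\in\mathbb{C}\setminus\mathbb{R}$. The principal symbol of $\Lambda^{-1}(B-M(z))\Lambda^{-1}$ on the asymptotic straight rays of $\Gamma$ can be computed by Fourier analysis as in \cite{BHT22,BHT23}; its determinant has leading part a nonzero multiple of $(\tfrac{d}{4}-c^2)^2-\lambda^2 c^2$. In the non-critical regime (i)(a) this renders $B-M(z)$ elliptic of order $+1$, supplying invertibility and, via elliptic regularity on $\Gamma$, the implication $\Gamma_0 f\in L^2(\Gamma;\mathbb{C}^2)\Rightarrow\Gamma_0 f\in H^{1/2}(\Gamma;\mathbb{C}^2)$; the standard trace theorem for first-order elliptic systems then lifts $f_\pm$ to $H^1(\Omega_\pm;\mathbb{C}^2)$. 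In the critical case (i)(b) the principal symbol degenerates on a nontrivial subspace at every Fourier dual point, and localising to a straight end of $\Gamma$ one builds quasimodes at high frequency on this degenerate subspace to produce elements of $\dom A_{\eta,\tau,\lambda}^{m,c}$ whose traces lie in no $H^s(\Gamma)$ with $s>0$.

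\textbf{Essential spectrum --- the main obstacle.} The Krein-type resolvent formula identifies $(A_{\eta,\tau,\lambda}^{m,c}-z)^{-1}-(A_{0,0,0}^{m,c}-z)^{-1}$ with a boundary factor through $(B-M(z))^{-1}$, so $\sigma_{\textup{ess}}(A_{\eta,\tau,\lambda}^{m,c})\setminus\sigma_{\textup{ess}}(A_{0,0,0}^{m,c})$ consists exactly of those $z\in\rho(A_{0,0,0}^{m,c})$ at which $B-M(z)$ fails to be Fredholm on $L^2(\Gamma;\mathbb{C}^2)$. The delicate step is to exploit that $\Gamma$ is straight outside a compact set: combining the bi-Lipschitz bound~\eqref{equation_bi_Lipschitz} with the compactness results of Appendix~\ref{appendix_int_op}, I split $M(z)=M_{\textup{st}}(z)+K(z)$, where $M_{\textup{st}}(z)$ is the Weyl function attached to an auxiliary straight-line (or straight-ends) model and $K(z)$ is compact in $L^2(\Gamma;\mathbb{C}^2)$. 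Hence $B-M(z)$ is Fredholm iff $B-M_{\textup{st}}(z)$ is. For the straight line, translation invariance produces a direct integral representation in which $B-M_{\textup{st}}(z)$ becomes a measurable family of $2\times 2$ matrices parameterised by the Fourier dual variable $k\in\mathbb{R}$; non-Fredholmness is equivalent to vanishing of its determinant, and solving the resulting quadratic equation in $z$ gives the branches $z_\pm(k)$ in (ii)(c). The sign condition there selects which branch actually contributes via the pole structure of the matrix inverse. The degenerations $d=4c^2$ yield (ii)(a) when $\lambda\neq 0$ (the determinant vanishes identically in $z$, so the entire real line becomes essential spectrum) and (ii)(b) when $\lambda=0$ (a single isolated solution $z=-\tfrac{\tau}{\eta}mc^2$ persists).

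\textbf{Straight line.} When $\Gamma$ is itself a straight line, the direct integral decomposition above is exact rather than a compact approximation. Each fiber is an explicitly solvable one-dimensional Dirac operator with a point interaction, and the corresponding spectra, together with the bands $\{z_\pm(k):k\in\mathbb{R}\}$, combine without producing discrete eigenvalues of the total operator. Hence $\sigma(A_{\eta,\tau,\lambda}^{m,c})=\sigma_{\textup{ess}}(A_{\eta,\tau,\lambda}^{m,c})$, in agreement with the explicit description of~\cite{BHT22,BHT23}.
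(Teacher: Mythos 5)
Your overall architecture (ordinary boundary triple with $L^2(\Gamma;\mathbb{C}^2)$-valued boundary maps built from the jump and average of the traces, Weyl function given by the boundary integral operator, and a compact comparison with the straight-line model via Appendix~\ref{appendix_int_op}) is the same as the paper's, but two steps in your argument have genuine gaps. First, self-adjointness: since $T^D_\pm$ only maps $H(\sigma,\Omega_\pm)$ into $H^{-1/2}(\Gamma;\mathbb{C}^2)$, the $\Lambda$-regularization in \eqref{def_Lambda} is unavoidable, and once $\Gamma_0$ carries $\Lambda^{-1}$ the Green identity forces $\Gamma_1$ to carry $\Lambda$; the transmission condition then corresponds to the \emph{unbounded} parameter $\Theta=\Lambda\bigl[F-2c^2V(\sigma\cdot\nu)(\mathcal{C}_\zeta^{m,c}+\mathcal{C}_{\overline\zeta}^{m,c})(\sigma\cdot\nu)V^*\bigr]\Lambda$ of \eqref{def_Theta1}, not a bounded matrix parameter as you assert. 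By Theorem~\ref{theorem_bt} the issue is precisely the self-adjointness of this unbounded symmetric $\Theta$, which the paper obtains by writing $U_\gamma\Theta U_\gamma^{-1}$ as the straight-line parameter plus a compact symmetric term (Proposition~\ref{proposition_C_z_Gamma}) and importing self-adjointness of the straight-line operator from the literature. Your substitute --- ``ellipticity of $B-M(z)$ implies invertibility'' --- is not valid as stated (ellipticity yields at best Fredholmness, never invertibility by itself), and, more importantly, it is only offered in the non-critical regime: the theorem asserts self-adjointness for \emph{all} $\eta,\tau,\lambda$, including the critical case $(\tfrac d4-c^2)^2=\lambda^2c^2$ and the confinement case $d=-4c^2$, and your proposal gives no self-adjointness argument there at all. (Note also that the quantity $(\tfrac d4-c^2)^2-\lambda^2c^2$ governs the \emph{order}/regularity of $\Theta$, i.e.\ part (i), not the invertibility of $\Theta-M(z)$ for nonreal $z$, which holds in the critical case too.)

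Second, the essential spectrum: the Weyl-function/Fredholm characterization ($z\in\sigma_{\textup{ess}}$ iff $0\in\sigma_{\textup{ess}}(\Theta-M(z))$) is only available for $z\in\rho(B_0)$, i.e.\ inside the gap $(-|m|c^2,|m|c^2)$, because $M(z)$ is not defined on the bands. Your proposal therefore never proves the inclusion $(-\infty,-|m|c^2]\cup[|m|c^2,+\infty)\subset\sigma_{\textup{ess}}(A_{\eta,\tau,\lambda}^{m,c})$, which is part of (ii)(c) and is needed to get $\sigma_{\textup{ess}}=\mathbb{R}$ in (ii)(a) and the band parts in (ii)(b); one cannot get it from the resolvent formula either, since the resolvent difference with the free operator is not compact for an unbounded $\Gamma$. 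The paper closes this by an explicit singular (Weyl) sequence supported far from $\Gamma$ (after rotating so that $\Gamma\subset[0,+\infty)\times\mathbb{R}$), and some such argument must be added. The remaining ingredients of your sketch --- the fiber analysis on the straight line producing the branches $z_\pm(k)$ with the sign condition, the degenerate cases $d=4c^2$, and the absence of discrete spectrum for the straight line --- are consistent with what the paper imports from \cite{BHT22,BHT23,GL06}, but as written they are assertions rather than proofs and would either need to be carried out or cited.
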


We point out that an interesting spectral transition appears in the case when $d=4 c^2$: If $\lambda \neq 0$, then always $\sigma(A_{\eta, \tau, \lambda}^{m, c}) = \mathbb{R}$, but for $\lambda = 0$ the essential spectrum within the interval $(-|m| c^2, |m| c^2)$ breaks down to the single point $-\frac{\tau}{\eta} m c^2$.

Theorem~\ref{theorem_intro} will be proved in Section~\ref{section_proof} with the help of a suitable boundary triple. In Subsection~\ref{section_krein} we will also show a variant of the Birman-Schwinger principle, a Krein type resolvent formula, and an isospectral relation that are useful in the analysis of $A_{\eta, \tau, \lambda}^{m, c}$.

\subsection{Special cases}

The next results follow immediately from Theorem~\ref{theorem_intro} (and thus, no proofs of these corollaries are given), but we prefer to state them here as they are more explicit than Theorem~\ref{theorem_intro} and of particular interest in applications. 
First, we discuss the case of purely electrostatic interactions, i.e. when $\eta \in \mathbb{R}$ and $\tau = \lambda = 0$. Note that, as in the case of the straight line \cite{BHT22}, there appears a spectral transition at $\eta  =\pm 2 c$, where an interval of continuous spectrum of $A_{\eta, 0, 0}^{m, c}$ collapses to a single point in the essential spectrum. 

\begin{cor} \label{corollary_electrostatic}
  Let $\eta \in \mathbb{R}$. Then, one has 
  \begin{equation*} 
    \sigma_{\textup{ess}}(A_{\eta,0,0}^{m, c}) = \begin{cases} 
        (-\infty, \frac{4 c^2 - \eta^2}{4 c^2 + \eta^2}|m| c^2]\cup[|m| c^2,+\infty) & \text{ for }\eta\in(-\infty,-2c),\\
        (-\infty,-|m| c^2] \cup \{ 0 \} \cup [|m| c^2,+\infty) & \text{ for }\eta=-2 c,\\
        (-\infty, -|m| c^2] \cup [\frac{4 c^2 - \eta^2}{4 c^2 + \eta^2}|m| c^2,+\infty) & \text{ for }\eta\in(-2c,0),\\
        (-\infty,-|m| c^2] \cup [|m| c^2,+\infty) & \text{ for }\eta=0,\\
        (-\infty, \frac{\eta^2 - 4 c^2}{\eta^2 + 4 c^2} |m| c^2]\cup[|m| c^2,+\infty) & \text{ for }\eta\in(0,2c),\\
        (-\infty,-|m| c^2] \cup \{ 0 \} \cup [|m| c^2,+\infty) & \text{ for }\eta=2 c,\\
        (-\infty, -|m| c^2] \cup [\frac{\eta^2 - 4 c^2}{\eta^2 + 4 c^2}|m| c^2, +\infty) & \text{ for }\eta\in(2c,+\infty). \end{cases}
  \end{equation*}
\end{cor}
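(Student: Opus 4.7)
The plan is to read off the corollary by specializing Theorem~\ref{theorem_intro}(ii) to the purely electrostatic case. With $\tau=\lambda=0$ one has $d = \eta^2$, so the critical condition $d = 4c^2$ is equivalent to $\eta = \pm 2c$.

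First I would dispose of the two boundary values $\eta = \pm 2c$ by invoking item (ii)(b) directly: since $\tau = 0$, the isolated middle point $-\tfrac{\tau}{\eta}mc^2$ collapses to $0$, producing exactly the stated form $(-\infty,-|m|c^2]\cup\{0\}\cup[|m|c^2,+\infty)$. For all other $\eta$ the condition $d\neq 4c^2$ holds and item (ii)(c) applies. The two half-lines $(-\infty,-|m|c^2]\cup[|m|c^2,+\infty)$ are automatic, so the only work is to describe $\sigma_{\textup{ess}}(A_{\eta,0,0}^{m,c}) \cap (-|m|c^2, |m|c^2)$.

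To that end I would substitute $\tau = \lambda = 0$ and $d = \eta^2$ into the explicit formula for $z_\pm(k)$ in Theorem~\ref{theorem_intro}(ii)(c). The linear-in-$k$ term and the mixed $\lambda\tau$ term both vanish, and, after using the identity $16\eta^2 c^2 + (\eta^2-4c^2)^2 = (\eta^2+4c^2)^2$ to simplify the denominator, one obtains the clean expression
$$z_\pm(k) \;=\; \pm \frac{|\eta^2 - 4c^2|}{\eta^2 + 4c^2}\, c^2\sqrt{k^2+m^2}.$$
Hence as $k$ runs through $\mathbb{R}$, the $z_+$-branch sweeps out $\bigl[\tfrac{|\eta^2-4c^2|}{\eta^2+4c^2}|m|c^2,+\infty\bigr)$ and the $z_-$-branch sweeps out $\bigl(-\infty, -\tfrac{|\eta^2-4c^2|}{\eta^2+4c^2}|m|c^2\bigr]$, each of which is already closed.

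The selection rule, which for $\tau=\lambda=0$ reduces to $(\eta^2-4c^2)\,\eta\, z_\pm > 0$, is $k$-independent and picks out precisely the $z_+$-branch when $\eta^2-4c^2$ and $\eta$ have the same sign and the $z_-$-branch when they have opposite signs; for $\eta=0$ the inequality fails trivially and no essential spectrum sits inside the gap, matching the $\eta=0$ line of the table. Unpacking the four remaining intervals $\eta\in(-\infty,-2c)$, $(-2c,0)$, $(0,2c)$, $(2c,+\infty)$, intersecting with $(-|m|c^2,|m|c^2)$, and rewriting $|\eta^2-4c^2|$ as $\pm(\eta^2-4c^2)$ according as $|\eta|>2c$ or $|\eta|<2c$ produces each of the seven displayed formulas. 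There is no conceptual obstacle here beyond careful sign bookkeeping; the whole statement is a direct deduction from Theorem~\ref{theorem_intro}.
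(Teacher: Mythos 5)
Your proposal is correct and follows exactly the route the paper intends: the paper gives no separate proof of Corollary~\ref{corollary_electrostatic}, stating that it follows immediately from Theorem~\ref{theorem_intro}, and your specialization ($\tau=\lambda=0$, $d=\eta^2$, the simplification $z_\pm(k)=\pm\tfrac{|\eta^2-4c^2|}{\eta^2+4c^2}c^2\sqrt{k^2+m^2}$, and the sign analysis of the selection condition $(\eta^2-4c^2)\eta z_\pm>0$, together with item (ii)(b) for $\eta=\pm 2c$) is precisely the intended computation.
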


For purely Lorentz scalar interactions, i.e. for $\tau \in \mathbb{R}$ and $\eta = \lambda = 0$, the following result holds. Note that for $\tau m \geq 0$ the essential spectrum of $A_{0, \tau, 0}^{m, c}$ coincides with the spectrum of the free Dirac operator.

\begin{cor} \label{corollary_Lorentz_scalar}
  Let $\tau \in \mathbb{R}$. Then, one has 
  \begin{equation*} 
    \sigma_{\textup{ess}}(A_{0, \tau,0}^{m, c}) = \begin{cases} 
        (-\infty,-|m| c^2] \cup [|m| c^2, +\infty) & \text{ for } \tau m\geq 0,\\
        (-\infty, -\frac{|4 c^2 - \tau^2|}{4 c^2 + \tau^2} |m| c^2]\cup[\frac{|4 c^2 - \tau^2|}{4 c^2 + \tau^2} |m| c^2,+\infty) & \text{ for } \tau m<0. \end{cases}
  \end{equation*}
\end{cor}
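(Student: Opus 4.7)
The plan is to apply Theorem~\ref{theorem_intro}(ii) directly with the substitution $\eta=\lambda=0$. Under this choice we have $d=-\tau^2$, hence $d-4c^2=-(\tau^2+4c^2)\neq 0$, so only case (c) of the theorem is relevant. The inclusion $(-\infty,-|m|c^2]\cup[|m|c^2,+\infty)\subset\sigma_{\textup{ess}}(A_{0,\tau,0}^{m,c})$ is then immediate, and the task reduces to identifying the intersection of the essential spectrum with the gap $(-|m|c^2,|m|c^2)$.

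First, I would evaluate the selection condition. With $\eta=\lambda=0$ the inequality $(d-4c^2)\bigl(\tfrac{\eta}{c^2}z_\pm(k)+\lambda k+\tau m\bigr)>0$ collapses to the $k$-independent condition $-(\tau^2+4c^2)\tau m>0$, i.e.\ $\tau m<0$. Consequently, if $\tau m\geq 0$ the admissible set is empty and $\sigma_{\textup{ess}}(A_{0,\tau,0}^{m,c})\cap(-|m|c^2,|m|c^2)=\emptyset$, which yields the first line of the corollary (in particular the degenerate cases $\tau=0$ or $m=0$, both of which reproduce the free Dirac spectrum).

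For the remaining case $\tau m<0$, I would insert $\eta=\lambda=0$ into the formula for $z_\pm(k)$ from Theorem~\ref{theorem_intro}(ii)(c). The essential computational step is the pair of elementary identities
\begin{equation*}
  \tau^2c^2+\bigl(\tfrac{d}{4}+c^2\bigr)^2=\tfrac{1}{16}(4c^2+\tau^2)^2, \qquad \bigl(\tfrac{d}{4c^2}-1\bigr)^2=\tfrac{(4c^2+\tau^2)^2}{16c^4},
\end{equation*}
which together with the vanishing of all $\lambda$- and $\eta$-dependent terms simplify $z_\pm(k)$ to
\begin{equation*}
  z_\pm(k)=\pm\,\frac{c^2\sqrt{(4c^2+\tau^2)^2k^2+(4c^2-\tau^2)^2m^2}}{4c^2+\tau^2}.
\end{equation*}
Since $k\mapsto z_\pm(k)$ is continuous, attains its extremal values $\pm\tfrac{|4c^2-\tau^2|}{4c^2+\tau^2}|m|c^2$ at $k=0$, and $|z_\pm(k)|\to\infty$ as $|k|\to\infty$, the closure of $\{z_+(k):k\in\mathbb{R}\}\cup\{z_-(k):k\in\mathbb{R}\}$ equals $\bigl(-\infty,-\tfrac{|4c^2-\tau^2|}{4c^2+\tau^2}|m|c^2\bigr]\cup\bigl[\tfrac{|4c^2-\tau^2|}{4c^2+\tau^2}|m|c^2,+\infty\bigr)$. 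Intersecting with $(-|m|c^2,|m|c^2)$ and taking the union with the external part delivers the second line of the corollary.

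Because the whole argument is substitution into Theorem~\ref{theorem_intro}(ii)(c) combined with the elementary identity above, there is no genuine obstacle; this reflects the authors' comment that the corollary follows immediately from the main theorem.
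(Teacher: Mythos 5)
Your proposal is correct and follows exactly the route the paper intends: the paper gives no proof, stating the corollaries follow immediately from Theorem~\ref{theorem_intro}, and your substitution $\eta=\lambda=0$, $d=-\tau^2$ into part (ii)(c), the simplification of $z_\pm(k)$, and the reduction of the selection condition to $\tau m<0$ are all accurate. No gaps.
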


Finally, we consider the case of purely anomalous magnetic interactions, i.e. when $\lambda \in \mathbb{R}$ and $\eta = \tau = 0$. Note that in this configuration the essential spectrum of $A_{0, 0, \lambda}^{m, c}$ always coincides with the spectrum of the free Dirac operator.

\begin{cor} \label{corollary_magnetic}
  Let $\lambda \in \mathbb{R}$. Then, one has 
  \begin{equation*} 
    \sigma_{\textup{ess}}(A_{0, 0, \lambda}^{m, c}) = (-\infty,-|m| c^2] \cup [|m| c^2, +\infty).
  \end{equation*}
\end{cor}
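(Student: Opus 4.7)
The plan is to specialize Theorem~\ref{theorem_intro}(ii) to $\eta = \tau = 0$. With these choices $d = \eta^2 - \tau^2 - \lambda^2 = -\lambda^2 \leq 0$, so in particular $d \neq 4c^2$ for every $\lambda \in \mathbb{R}$, placing us in case (ii)(c). That case already delivers the inclusion $(-\infty, -|m|c^2] \cup [|m|c^2, +\infty) \subset \sigma_\textup{ess}(A_{0,0,\lambda}^{m,c})$ for free, so the only thing left to verify is that the essential spectrum picks up no point inside the open gap $(-|m|c^2, |m|c^2)$.

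Next, I would plug $\eta = \tau = 0$ into the selection condition $(d - 4c^2)(\tfrac{\eta}{c^2} z_\pm(k) + \lambda k + \tau m) > 0$ from (ii)(c); it collapses to $(d - 4c^2) \lambda k > 0$. Since $d - 4c^2 = -\lambda^2 - 4c^2 < 0$, the condition is equivalent to $\lambda k < 0$. This is never satisfied when $\lambda = 0$ (settling that sub-case immediately) and otherwise carves out a half-line of admissible $k$.

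The remaining task is a short computation. Substituting $\eta = \tau = 0$ and $d = -\lambda^2$ into the formula for $z_\pm(k)$ and using the algebraic identity $16\lambda^2 c^2 + (4c^2 - \lambda^2)^2 = (4c^2 + \lambda^2)^2$, I expect to obtain
\[
z_\pm(k) = \pm \frac{c^2}{\lambda^2 + 4c^2} \sqrt{(4c^2 - \lambda^2)^2 k^2 + (4c^2 + \lambda^2)^2 m^2},
\]
so that $z_\pm(0) = \pm |m| c^2$ and $|z_\pm(k)|$ is non-decreasing in $|k|$; hence $|z_\pm(k)| \geq |m|c^2$ for every $k \in \mathbb{R}$. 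The set appearing in (ii)(c) is therefore disjoint from $(-|m|c^2, |m|c^2)$, and taking closures preserves this, yielding the claimed identity.

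No step is genuinely hard: the only delicate point is the algebraic identity above, which is precisely what forces the branch values $z_\pm(0)$ to land on the endpoints of the free-Dirac gap rather than strictly inside it. Once that is observed, the anomalous magnetic interaction cannot open any spectral contribution within $(-|m|c^2,|m|c^2)$, and the essential spectrum coincides with that of the free Dirac operator.
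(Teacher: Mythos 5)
Your specialization of Theorem~\ref{theorem_intro}~(ii)~(c) to $\eta=\tau=0$ is exactly the argument the paper intends (it omits the proof as an immediate consequence of that theorem), and the computation is correct: the identity $16\lambda^2c^2+(4c^2-\lambda^2)^2=(4c^2+\lambda^2)^2$ indeed gives $z_\pm(k)=\pm\frac{c^2}{\lambda^2+4c^2}\sqrt{(4c^2-\lambda^2)^2k^2+(4c^2+\lambda^2)^2m^2}$, so $|z_\pm(k)|\geq |m|c^2$ and nothing enters the gap. Nothing to add.
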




\subsection{Geometrically induced bound states}

In the present paper we prove the existence of geometrically induced bound states in two situations: Firstly, for $A_{\eta/2, \eta/2, 0}^{m, c}$ for large values of $c$ with the help of the nonrelativistic limit, and secondly for $A_{0, \tau, 0}^{m, c}$ by using the test functions from \cite{FHL23}.

The nonrelativistic limit is one way to relate a relativistic Dirac operator $A^{m, c}$ with the Schr\"odinger operator $H$ that describes the same physical system \cite[Chapter~6]{T92}. To compute the nonrelativistic limit, one has to subtract the particle's rest energy $m c^2$, which is a purely relativistic quantity, from the total energy and compute the limit in the operator norm of the resolvent of the resulting operator $A^{m, c} - m c^2$ for $c \rightarrow +\infty$. The expected limit is then the resolvent of $H$ times a projection onto the upper component. This result is of interest by its own, as it gives a physical interpretation of the model, but taking properties of convergence of the resolvents into account, this allows to relate the spectral properties of $H$ to those of $A^{m, c}$ for large values of $c$.

In this paper we consider a simple case and compute the nonrelativistic limit of $A_{\eta/2, \eta/2, 0}^{m, c}$, see also Remark~\ref{remark_nonrelativistic_limit} for a more general consideration. In the following, we assume that $\Gamma$ is given as in Hypothesis~\ref{hypothesis_Gamma} and that $m > 0$. First, we have to introduce the expected limit operator, a Schr\"odinger operator with a $\delta$-potential supported on $\Gamma$.
Define for $\eta \in \mathbb{R}$ in $L^2(\mathbb{R}^2; \mathbb{C})$ the sesquilinear form
\begin{equation} \label{def_Schroedinger_delta}
  \mathfrak{h}_\eta[f, g] := \frac{1}{2 m} (\nabla f, \nabla g)_{L^2(\mathbb{R}^2; \mathbb{C})} + \eta (T^D f, T^D g)_{L^2(\Gamma; \mathbb{C})}, \quad \dom \mathfrak{h}_\eta = H^1(\mathbb{R}^2; \mathbb{C}),
\end{equation}
where $T^D: H^1(\mathbb{R}; \mathbb{C}) \rightarrow H^{1/2}(\Gamma; \mathbb{C})$ is the Dirichlet trace operator.
It is well-known that $\mathfrak{h}_\eta$ is densely defined, symmetric, and  closed \cite{BEKS94, EI01}. Hence, by the first representation theorem \cite[Chapter~VI, Theorem~2.1]{K95} there exists a unique self-adjoint operator $H_\eta$ that is associated with $\mathfrak{h}_\eta$. The operator $H_\eta$  can be regarded as the rigorous way to interpret the formal expression  $-\frac{1}{2 m} \Delta + \eta \delta_\Gamma$. With this notation we can formulate the result about the nonrelativistic limit of $A_{\eta/2, \eta/2, 0}^{m, c}$, which yields, combined with a result from \cite{EI01}, a statement about the existence of geometrically induced bound states:

\begin{thm} \label{theorem_nonrelativistic_limit}
  Let $\eta\in \mathbb{R}$, $m > 0$, and $\Gamma$ be as in Hypothesis~\ref{hypothesis_Gamma}. Then, there exists $z_0 < 0$ such that  every $z\in(-\infty,z_0)$ belongs to $\rho(A_{\eta/2, \eta/2, 0}^{m, c}-mc^2) \cap \rho(H_\eta)$, whenever $c$ is above a $z$-dependent threshold value $c_z$. Moreover, there exists a constant $K > 0$ such that
  \begin{equation*}
    \left\| \big(A_{\eta/2, \eta/2, 0}^{m, c} - (z + m c^2)\big)^{-1} - (H_\eta - z)^{-1} \begin{pmatrix} 1 & 0 \\ 0 & 0 \end{pmatrix} \right\|_{L^2(\mathbb{R}^2; \mathbb{C}^2) \rightarrow L^2(\mathbb{R}^2; \mathbb{C}^2)} \leq \frac{K}{c}
  \end{equation*}
  for all $c>c_z$.
  In particular, if $\Gamma$ is not the straight line, $\eta < 0$, and $c$ is sufficiently large,  then $\sigma_{\textup{disc}}(A_{\eta/2, \eta/2, 0}^{m, c}) \neq \emptyset$.
\end{thm}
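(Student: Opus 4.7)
The plan is to establish the resolvent convergence via matching Krein-type formulas for the Dirac operator $A_{\eta/2,\eta/2,0}^{m,c}$ and the Schrödinger operator $H_\eta$, and then extract the existence of discrete spectrum from the existence of a Schrödinger bound state in \cite{EI01} by norm-resolvent spectral continuity. Throughout I work on the negative half-line $z<z_0<0$, where both $H_\eta-z$ and $A_{\eta/2,\eta/2,0}^{m,c}-(z+mc^2)$ should be invertible for large $c$.

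First I would analyze the free Dirac resolvent. Writing $A_0^{m,c}=-ic(\sigma\cdot\nabla)+mc^2\sigma_3$, one has
\[
  (A_0^{m,c}-(z+mc^2))^{-1}=\bigl(c(\sigma\cdot\nabla)/i+(z+mc^2)\sigma_0+mc^2\sigma_3\bigr)\cdot(-c^2\Delta-(z+mc^2)^2+m^2c^4)^{-1}.
\]
Setting $k_c^2:=m^2c^2-(z+mc^2)^2/c^2=-2mz-z^2/c^2$, the scalar resolvent kernel is the Macdonald function $\tfrac{1}{2\pi c^2}K_0(k_c|x-y|)$. As $c\to\infty$, $k_c^2\to -2mz$, so the upper–left $2\times 2$ block of the free Dirac resolvent converges to $(-\tfrac{1}{2m}\Delta-z)^{-1}$ in operator norm with rate $O(c^{-2})$, while the off-diagonal and lower–right blocks are of order $O(c^{-1})$ because they carry explicit factors $1/c$. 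Next I would write the Krein formula from Subsection~\ref{section_krein}: the resolvent of $A_{\eta/2,\eta/2,0}^{m,c}$ differs from the free one by a term of the form $\gamma^c_{z+mc^2}\,(1+\Theta^c\,M^c_{z+mc^2})^{-1}\Theta^c(\gamma^c_{\bar z+mc^2})^*$ where $\Theta^c$ encodes the coupling $(\eta/2,\eta/2,0)$, while $\gamma^c$, $M^c$ are the $\gamma$-field and Weyl function built from the free Dirac resolvent kernel restricted to $\Gamma$. Convergence of the $\gamma$-field and the Weyl function as $c\to\infty$, combined with the identity $\tfrac{\eta}{2}\sigma_0+\tfrac{\eta}{2}\sigma_3=\eta\,\mathrm{diag}(1,0)$, is precisely what makes the perturbation term collapse onto the scalar single-layer perturbation producing the $\delta$-potential of strength $\eta$ in $H_\eta$. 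Checking the algebra carefully (this is the main technical step), one obtains the $O(1/c)$ bound in the theorem.

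The hard part is not the pointwise kernel expansion but ensuring uniform-in-$c$ invertibility of the Birman–Schwinger-type operator $1+\Theta^c M^c_{z+mc^2}$ for $z$ fixed and negative enough, so that the Krein formula is meaningful for all large $c$. This is handled by choosing $|z|$ large: the trace-type operator $M^c_{z+mc^2}$ acts on $L^2(\Gamma;\mathbb{C}^2)$ with norm going to $0$ as $|z|\to\infty$ uniformly for $c$ large (the decay of $K_0,K_1$ at rate $k_c$), so a threshold $z_0$ exists below which $\|\Theta^c M^c_{z+mc^2}\|<\tfrac12$ for all sufficiently large $c$, producing simultaneously $z\in\rho(A_{\eta/2,\eta/2,0}^{m,c}-mc^2)$ and $z\in\rho(H_\eta)$, and the $O(c^{-1})$ norm difference of resolvents.

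Finally, for the discrete spectrum conclusion, I would invoke \cite{EI01}: when $\Gamma$ is not straight and $\eta<0$, the operator $H_\eta$ has $\sigma_{\textup{ess}}(H_\eta)=[-m\eta^2/2,\infty)$ and at least one eigenvalue $\lambda_0<-m\eta^2/2$. Pick any $z$ close to but different from $\lambda_0$; by the resolvent convergence, $(A_{\eta/2,\eta/2,0}^{m,c}-(z+mc^2))^{-1}$ inherits a nearby singularity, hence $A_{\eta/2,\eta/2,0}^{m,c}$ has spectrum within $O(c^{-1})$ of $\lambda_0+mc^2$ for all large $c$. It remains to confirm that this point is isolated. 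Applying Theorem~\ref{theorem_intro}(ii)(c) with $\eta_D=\tau_D=\eta/2$, $\lambda_D=0$, so that $d=0$, one computes $\sigma_{\textup{ess}}(A_{\eta/2,\eta/2,0}^{m,c})\cap(-mc^2,mc^2)\subset\bigl[mc^2\tfrac{4c^2-\eta^2}{4c^2+\eta^2},\,mc^2\bigr)$, whose lower edge is $mc^2-\tfrac{m\eta^2}{2}+O(c^{-2})$. Since $\lambda_0<-\tfrac{m\eta^2}{2}$, the candidate eigenvalue $\lambda_0+mc^2$ lies strictly below this threshold for large $c$ and strictly above $-mc^2$, so it must be a genuine discrete eigenvalue of $A_{\eta/2,\eta/2,0}^{m,c}$, completing the proof.
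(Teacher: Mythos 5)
Your skeleton is the same as the paper's: write the Krein-type resolvent formula (Proposition~\ref{proposition_Krein_formula}) for $A_{\eta/2,\eta/2,0}^{m,c}$, use $\tfrac{\eta}{2}\sigma_0+\tfrac{\eta}{2}\sigma_3=\eta\,\mathrm{diag}(1,0)$ to collapse the matrix coupling onto the scalar single layer, prove $O(1/c)$ convergence of the free resolvent and of the surface operators to $(-\tfrac{1}{2m}\Delta-z)^{-1}$, $SL(z)$, $\mathcal{S}(z)$, and then transfer the bound state of \cite{EI01} by norm-resolvent spectral continuity. However, two of your justifications have genuine gaps. First, your mechanism for the uniform-in-$c$ invertibility of the Birman--Schwinger operator is wrong as stated: the strongly singular $K_1$ (Cauchy-type) part of $\mathcal{C}_{z+mc^2}^{m,c}$ does \emph{not} decay as $z\to-\infty$ -- its kernel behaves like $\tfrac{1}{2\pi c}|s-t|^{-1}$ on the diagonal independently of $z$ (compare \eqref{C_z_inv}, which forces $\|\mathcal{C}_z^{m,c}\|\geq\tfrac{1}{2c}$), so ``decay of $K_0,K_1$ at rate $k_c$'' does not make $\Theta^c M^c$ small; only the $1/c$ prefactor does, and this must be disentangled from the $z$-dependence. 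The paper avoids this entirely: it takes $z_0$ from the known invertibility of $I+\eta\mathcal{S}(z)$ for $z<z_0$ (Lemma~\ref{lemma_Krein_Schroedinger}, from \cite{BEKS94}) and then perturbs by the estimate $\|e^\top\mathcal{C}_{z+mc^2}^{m,c}e-\mathcal{S}(z)\|\leq K/c$ of Proposition~\ref{proposition_convergence_Phi_C}. Relatedly, you dismiss the kernel estimates as routine, but they are the technical heart of the proof: the bounds $\|\Phi_{z+mc^2}^{m,c}e-SL(z)e\|\leq K/c$ and the analogous one for $e^\top\mathcal{C}e$ require Schur tests whose applicability on an unbounded curve rests on the bi-Lipschitz property \eqref{equation_bi_Lipschitz}; without them you have no quantitative $O(1/c)$ statement at all.

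Second, in the last step you invoke the resolvent convergence at points $z$ ``close to $\lambda_0$'', but the convergence was only established for $z<z_0$, and $z_0$ lies below $\inf\sigma(H_\eta)\leq\lambda_0$, so that argument is not available where you use it. It can be repaired: from the bound at a single fixed $z<z_0$ and self-adjointness, $\mathrm{dist}\big((\lambda_0-z)^{-1},\sigma\big((A_{\eta/2,\eta/2,0}^{m,c}-(z+mc^2))^{-1}\big)\big)\leq K/c$, which yields spectrum of $A_{\eta/2,\eta/2,0}^{m,c}-mc^2$ near $\lambda_0$; one must also note, as the paper does, that $(H_\eta-z)^{-1}P_+$ is the resolvent of a self-adjoint \emph{relation} whose operator part is $H_\eta P_+$, not of an operator on $L^2(\mathbb{R}^2;\mathbb{C}^2)$. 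Once this is fixed, your way of certifying discreteness -- computing from Theorem~\ref{theorem_intro}~(ii)(c) with $d=0$ that $\sigma_{\textup{ess}}(A_{\eta/2,\eta/2,0}^{m,c})\cap(-mc^2,mc^2)$ starts at $mc^2\tfrac{4c^2-\eta^2}{4c^2+\eta^2}=mc^2-\tfrac{m\eta^2}{2}+O(c^{-2})$, strictly above $\lambda_0+mc^2$ -- is correct and is a nice explicit alternative to the paper's argument via convergence of spectral measures.
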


Theorem~\ref{theorem_nonrelativistic_limit} will  be proved in Section~\ref{section_nonrelativistic_limit} with the help of the Krein type resolvent formula for $A_{\eta/2, \eta/2, 0}^{m, c}$ provided in Section~\ref{section_proof}. In this resolvent formula, convergence of each term appearing there can be proved, when $c \rightarrow +\infty$, which yields 
the claim about the nonrelativistic limit. The statement about the discrete spectrum is then a simple consequence of the nonrelativistic limit, the non-emptiness of $\sigma_\textup{disc}(H_\eta)$ under the stated assumptions shown in \cite{EI01}, and general results that are known for operators that converge in the norm resolvent sense.

Another way to show the existence of geometrically induced bound states is to transfer a recent result from \cite{FHL23}.
Define for $\omega \in (0, \frac{\pi}{2})$ the broken line $\widetilde{\Gamma}_\omega$ of opening angle $2 \omega$ by
\begin{equation} \label{def_broken_line}
  \widetilde{\Gamma}_\omega := \big\{ (r \cos(\omega), r \sin(\omega)): r > 0 \big\} \cup \big\{ (r \cos(\omega), -r \sin(\omega)): r > 0 \big\}.
\end{equation}
It was shown in \cite{FHL23} that a Dirac operator with a Lorentz scalar $\delta$-shell interaction supported on $\widetilde{\Gamma}_\omega$ of strength $\tau \in (-\infty, 0) \setminus \{ -2 \}$ always has non-empty discrete spectrum, if $\omega$ is sufficiently small. This can be transferred to obtain an associated result for $A_{0, \tau, 0}^{m, c}$, if the interaction support $\Gamma$ is a compact smooth perturbation of $\widetilde{\Gamma}_\omega$. We remark that any curve $\Gamma$ satisfying Hypothesis~\ref{hypothesis_Gamma}, that is not a compact perturbation of the straight line, is, up to a translation and rotation, of this form and that the following result, in contrast to the one in Theorem~\ref{theorem_nonrelativistic_limit}, can be shown for arbitrary $c > 0$.

\begin{thm} \label{theorem_frymark_holzmann_lotoreichik} 
  Let $\tau \in (-\infty, 0) \setminus \{-2 \}$ and $\Gamma$ be as in Hypothesis~\ref{hypothesis_Gamma} such that $\Gamma$  coincides with $\widetilde{\Gamma}_\omega$ away from a compact set in $\mathbb{R}^2$, and  $m, c>0$. Then, for any $N \in \mathbb{N}$ there exists an angle $\omega_\star \in (0, \frac{\pi}{2})$ depending on $N, \tau$ such that for all $\omega \in (0, \omega_\star]$ the operator $A_{0, \tau, 0}^{m, c}$ has at least $N$ discrete eigenvalues with multiplicities taken into account.
\end{thm}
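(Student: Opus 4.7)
The plan is to combine the essential-spectrum formula from Corollary~\ref{corollary_Lorentz_scalar} with the existence result in~\cite{FHL23} for the Dirac operator with Lorentz scalar interaction on the pure broken line $\widetilde{\Gamma}_\omega$, transferring the latter to the smooth compact perturbation $\Gamma$ via a Birman--Schwinger comparison.

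First, from $\tau<0$ and $m>0$ we obtain $\tau m<0$; Corollary~\ref{corollary_Lorentz_scalar} then yields
\begin{equation*}
  \sigma_{\textup{ess}}(A_{0,\tau,0}^{m,c}) = (-\infty,-E_{c,\tau}]\cup[E_{c,\tau},+\infty),\qquad E_{c,\tau}:=\frac{|4c^2-\tau^2|}{4c^2+\tau^2}\,mc^2>0,
\end{equation*}
the strict positivity being guaranteed by the exclusion $\tau\neq -2$ (the counterpart of $\tau^2\neq 4c^2$ in the normalization of~\cite{FHL23}). Since the essential spectrum depends only on the two asymptotic directions of the interaction support, the identical formula holds for the broken-line operator $A_{0,\tau,0}^{m,c,\widetilde{\Gamma}_\omega}$. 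Both operators thus share the open spectral gap $G:=(-E_{c,\tau},E_{c,\tau})$, in which any spectrum is necessarily discrete of finite multiplicity.

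Second, by~\cite{FHL23}, for every $N\in\mathbb{N}$ there exists $\omega^\circ=\omega^\circ(N,\tau)>0$ such that $A_{0,\tau,0}^{m,c,\widetilde{\Gamma}_\omega}$ possesses at least $N$ eigenvalues in $G$ for all $\omega\in(0,\omega^\circ]$. Via the Birman--Schwinger principle prepared in Section~\ref{section_krein}, these eigenvalues correspond to the values $z\in G$ at which the associated compact boundary integral operator $M^{\widetilde{\Gamma}_\omega}(z)$ attains a critical value in its spectrum determined by the coupling $\tau$.

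Third, the analogous Birman--Schwinger operator $M^{\Gamma}(z)$ for $A_{0,\tau,0}^{m,c}$ acts on boundary data along $\Gamma$ and, after arc-length reparametrization, can be compared with $M^{\widetilde{\Gamma}_\omega}(z)$. Since $\Gamma$ coincides with $\widetilde{\Gamma}_\omega$ outside a compact set, the integral kernels of the two operators differ only through contributions supported on a compact portion of the curves together with decaying cross-terms; the compactness statements collected in Appendix~\ref{appendix_int_op} then ensure that $M^{\Gamma}(z)-M^{\widetilde{\Gamma}_\omega}(z)$ is compact in the appropriate $H^{-1/2}$--$H^{1/2}$ framework. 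Combined with the Krein-type resolvent formula from Section~\ref{section_krein}, this shows that $(A_{0,\tau,0}^{m,c}-z)^{-1} - (A_{0,\tau,0}^{m,c,\widetilde{\Gamma}_\omega}-z)^{-1}$ is compact, so that both operators share the gap $G$ with only discrete spectrum inside. A spectral-continuity/spectral-flow argument over $z\in G$ then implies that, in passing from $\widetilde{\Gamma}_\omega$ to $\Gamma$, at most a finite number $L$ of eigenvalues in $G$ can be lost; replacing $N$ by $N+L$ and tightening $\omega^\circ$ to $\omega_\star$ accordingly gives the claim. The main technical obstacle is to quantify the spectral-flow step and to control the finite eigenvalue loss $L$ uniformly as $\omega\to 0^+$, together with verifying compactness of $M^{\Gamma}(z)-M^{\widetilde{\Gamma}_\omega}(z)$ with uniform norm bounds in $z\in G$. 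An alternative, more direct route would use a Talman--Esteban--Séré-type min-max characterization for gap eigenvalues of Dirac operators in combination with the trial functions from~\cite{FHL23}, provided these can be localized (via exponential decay along the rays) outside the compact deformation region; this circumvents the spectral-flow machinery but requires detailed structural information on the \cite{FHL23} trial functions.
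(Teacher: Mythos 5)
Your proposed main route has a fatal gap at the transfer step. Compactness of the resolvent difference between $A_{0,\tau,0}^{m,c}$ (support $\Gamma$) and the broken-line operator only yields equality of the \emph{essential} spectra; it gives no control whatsoever over the number of discrete eigenvalues inside the gap. A compact (even finite-rank, even compactly supported) perturbation can create or destroy arbitrarily many gap eigenvalues -- think of $-\Delta$ versus $-\Delta+V$ with $V$ compactly supported -- so there is no principle producing a bound $L$ on the ``eigenvalues lost,'' and no spectral-flow argument is available without a continuous interpolating family together with control of eigenvalue crossings, which you do not construct. Worse, any such $L$ would depend on $\omega$ (the compact deformation region changes with $\omega$), and the uniformity as $\omega\to 0^+$ that you yourself flag as a ``technical obstacle'' is exactly what cannot be obtained this way; the scheme ``replace $N$ by $N+L$'' is therefore circular. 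Two further problems: the broken line does not satisfy Hypothesis~\ref{hypothesis_Gamma}, and the natural operator on $\widetilde{\Gamma}_\omega$ with Lorentz scalar coupling is only symmetric with deficiency indices $(1,1)$, so speaking of ``the'' self-adjoint operator $A_{0,\tau,0}^{m,c,\widetilde{\Gamma}_\omega}$, its gap, and its Birman--Schwinger operator requires a separate discussion that the paper's boundary-triple framework (built on smooth curves) does not supply; and the compactness of $M^{\Gamma}(z)-M^{\widetilde{\Gamma}_\omega}(z)$ is not covered by Appendix~\ref{appendix_int_op}, whose Proposition~\ref{proposition_C_z_Gamma} compares $\Gamma$ with the \emph{straight line} and uses the smoothness of $\gamma$ (e.g.\ $\ddot{\boldsymbol{\gamma}}$) in the kernel estimates, which fails at the corner of $\widetilde{\Gamma}_\omega$.

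The route you mention only in passing at the end is essentially the paper's actual proof, except that no abstract gap min-max (Talman/Esteban--S\'er\'e) is needed: the trial functions of \cite{FHL23} are already supported in a strip $[L,2L]\times\mathbb{R}$, so choosing $L$ beyond the compact deformation region they lie in $\dom A_{0,\tau,0}^{m,c}\cap\dom\widetilde{S}_\tau^{m,c}$ and the two operators agree on them; the quadratic-form estimate \cite[eq.~(6.3)]{FHL23} then shows that on an $N$-dimensional space $\|A_{0,\tau,0}^{m,c}f\|^2 < \big(mc^2\tfrac{\tau^2-4c^2}{\tau^2+4c^2}\big)^2\|f\|^2$ for $L$ large and $\omega$ small, and the min-max principle applied to the non-negative operator $(A_{0,\tau,0}^{m,c})^2$, combined with Corollary~\ref{corollary_Lorentz_scalar} and the spectral theorem, yields $N$ discrete eigenvalues in the gap. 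To repair your argument you should carry out this localization explicitly rather than attempt the comparison of the two Birman--Schwinger operators.
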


Theorem~\ref{theorem_frymark_holzmann_lotoreichik} is also shown in Section~\ref{section_nonrelativistic_limit} with the help of the min-max principle applied to the non-negative and self-adjoint operator $(A_{0, \tau, 0}^{m, c})^2$ and a family of test functions  constructed in \cite{FHL23} that  are supported in a region where $\Gamma$ and $\widetilde{\Gamma}_\omega$ coincide, which allows to transfer the result from \cite{FHL23} to the situation studied in this paper.

\section{Preliminaries} \label{section_preliminaries}

In this section we collect several results, that are needed to prove the main claims made on $A_{\eta, \tau, \lambda}^{m, c}$ in Section~\ref{section_main_results}. First, in Section~\ref{section_A_0} we summarize some well-known properties of the free Dirac operator $A_0^{m, c}$ and in Section~\ref{section_integral_operators} we state some properties of integral operators that are associated with the integral kernel of the resolvent of $A_0^{m, c}$. In Section~\ref{section_bt} we construct a boundary triple that is useful to study $A_{\eta, \tau, \lambda}^{m, c}$. Throughout this section, we always assume that $\Gamma \subset \mathbb{R}^2$ is as in Hypothesis~\ref{hypothesis_Gamma}.

\subsection{The free Dirac operator and function spaces for Dirac operators} \label{section_A_0}

In this section we recall the definition of the free Dirac operator in $\mathbb{R}^2$ and state several related results that are needed in the main part of the paper. Let $\sigma_1, \sigma_2, \sigma_3$ be the Pauli spin matrices defined in~\eqref{def_Pauli_matrices}.
For $m \in \mathbb{R}$ and $c > 0$ the free Dirac operator in $\mathbb{R}^2$ is 
\begin{equation} \label{def_free_op}
  A_0^{m, c} f := -i c (\sigma \cdot \nabla) f + m c^2 \sigma_3 f, \quad \dom A_0^{m, c} = H^1(\mathbb{R}^2; \mathbb{C}^2). 
\end{equation}
We will sometimes make  use of the relation $A_0^{m, c} = c A_0^{mc, 1}$.
With the help of the Fourier transform it is not difficult to show that $A_0^{m, c}$ is self-adjoint in $L^2(\mathbb{R}^2; \mathbb{C}^2)$ and that
\begin{equation*}
  \sigma(A_0^{m, c}) = \sigma_\text{ess}(A_0^{m, c}) = (-\infty, -|m| c^2] \cup [|m| c^2, +\infty).
\end{equation*}
Moreover, for $z \in \rho(A_0^{m, c})$ the resolvent of $A_0^{m, c}$ is given by
\begin{equation*}
  (A_0^{m, c} - z)^{-1} f(x) = \int_{\mathbb{R}^2} G_z^{m, c}(x - y) f(y) d y, \quad f \in L^2(\mathbb{R}^2; \mathbb{C}^2), ~x \in \mathbb{R}^2, 
\end{equation*}
where
\begin{equation} \label{def_G_lambda}
  \begin{split}
    G_z^{m, c}(x) &= \frac{1}{2 \pi c} \sqrt{\tfrac{z^2}{c^2} - (m c)^2} K_1 \left( - i \sqrt{\tfrac{z^2}{c^2} - (m c)^2} | x |\right)  \frac{(\sigma \cdot x)}{ | x | } \\
    &\qquad \qquad + \frac{1}{2 \pi c} K_0 \left(- i \sqrt{\tfrac{z^2}{c^2} - (m c)^2} | x |\right) \left( \frac{z}{c} \sigma_0 + m c \sigma_3\right)
  \end{split}
\end{equation}
and $K_j$ is the modified Bessel function of the second kind and order $j$; cf. \cite{AS84}. We remark that we always choose the complex square root such that for all $w\in\mathbb{C}\setminus[0,+\infty)$ one has $\textup{Im}(\sqrt{w})>0$ and hence $\overline{\sqrt{w}}=-\sqrt{\overline w}$ for $w\in\mathbb{C}\setminus[0,+\infty)$. Note also that
$A_0^{m, c} = c A_0^{mc, 1}$ yields 
\begin{equation} \label{relation_G_z_c}
  (A_0^{m, c} - z)^{-1} = \frac{1}{c} \left( A_0^{mc, 1} - \frac{z}{c} \right)^{-1} \quad \text{and} \quad G_z^{m, c} = \frac{1}{c} G_{z/c}^{m c, 1},\quad z \in \rho(A_0^{m, c}).
\end{equation}

Recall that $\Omega_\pm \subset \mathbb{R}^2$ are the two open sets with joint boundary $\partial \Omega_\pm = \Gamma$, see Hypothesis~\ref{hypothesis_Gamma}, and that we denote elements in $L^2(\Omega_\pm; \mathbb{C}^2)$ with a subscript $\pm$. We consider the spaces
\begin{equation} \label{def_H_sigma}
  H(\sigma, \Omega_\pm) := \big\{ f_\pm \in L^2(\Omega_\pm; \mathbb{C}^2): (\sigma \cdot \nabla) f_\pm \in L^2(\Omega_\pm; \mathbb{C}^2) \big\},
\end{equation}
which are, endowed with the norms 
\begin{equation*}
  \| f_\pm \|_{H(\sigma, \Omega_\pm)}^2 := \| f_\pm \|_{L^2(\Omega_\pm; \mathbb{C}^2)}^2 + \| (\sigma \cdot \nabla) f_\pm \|_{L^2(\Omega_\pm; \mathbb{C}^2)}^2,
\end{equation*}
Hilbert spaces; cf. \cite[Lemma~2.1]{BFSV17} for similar arguments in the case when $\Gamma$ is bounded. An important property of $H(\sigma, \Omega_\pm)$ is the fact that the trace operator can be extended to these spaces. The proof of the next lemma for our geometric setting follows very similar ideas as the ones in \cite[Lemma~2.3]{BFSV17} or \cite[Lemma~2.3]{PvdB22}.

\begin{lem} \label{lemma_trace_theorem}
  The map $T^D_{\pm, 0}: H^1(\Omega_\pm; \mathbb{C}^2) \rightarrow H^{1/2}(\Gamma; \mathbb{C}^2), T^D_{\pm, 0} f = f |_\Gamma$, extends uniquely by continuity to a bounded linear operator $T^D_\pm: H(\sigma, \Omega_\pm) \rightarrow H^{-1/2}(\Gamma; \mathbb{C}^2)$.
\end{lem}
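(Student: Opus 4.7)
The plan is to extend $T^D_{\pm,0}$ by duality via Green's identity, in the spirit of the classical construction of traces for $H(\mathrm{div})$-spaces, following closely the bounded-domain arguments in \cite[Lemma~2.3]{BFSV17} and \cite[Lemma~2.3]{PvdB22}. The only genuinely new issue compared to the bounded setting is the unboundedness of $\Omega_\pm$, which can be handled by a partition of unity adapted to the bounded non-straight core of $\Gamma$ and to its two straight ends.

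First I would verify the Green identity
\begin{equation*}
  (\sigma\cdot\nabla f,\,g)_{L^2(\Omega_\pm;\mathbb{C}^2)}+(f,\,\sigma\cdot\nabla g)_{L^2(\Omega_\pm;\mathbb{C}^2)}=\big((\sigma\cdot\nu_\pm)\,T^D_{\pm,0}f,\,T^D_{\pm,0}g\big)_{L^2(\Gamma;\mathbb{C}^2)}
\end{equation*}
for $f,g\in H^1(\Omega_\pm;\mathbb{C}^2)$, where $\nu_+:=\nu$ and $\nu_-:=-\nu$; this reduces componentwise to the scalar divergence theorem using $\sigma_j^*=\sigma_j$, first on $C^\infty_c(\overline{\Omega_\pm};\mathbb{C}^2)$ and then extended by density. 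Next, I would fix a bounded right inverse $E:H^{1/2}(\Gamma;\mathbb{C}^2)\to H^1(\Omega_\pm;\mathbb{C}^2)$ of $T^D_{\pm,0}$ and, for each $f\in H(\sigma,\Omega_\pm)$, define the linear functional
\begin{equation*}
  \ell_f(\varphi):=(\sigma\cdot\nabla f,\,E\varphi)_{L^2(\Omega_\pm;\mathbb{C}^2)}+(f,\,\sigma\cdot\nabla E\varphi)_{L^2(\Omega_\pm;\mathbb{C}^2)},\qquad \varphi\in H^{1/2}(\Gamma;\mathbb{C}^2).
\end{equation*}
Cauchy--Schwarz gives $|\ell_f(\varphi)|\le(1+\|E\|)\|f\|_{H(\sigma,\Omega_\pm)}\|\varphi\|_{H^{1/2}(\Gamma;\mathbb{C}^2)}$, so $\ell_f\in H^{-1/2}(\Gamma;\mathbb{C}^2)$; independence of the choice of $E$ follows because two right inverses differ by an $H^1_0(\Omega_\pm;\mathbb{C}^2)$ contribution, on which $\ell_f$ vanishes by the very definition of the distributional derivative $(\sigma\cdot\nabla)f\in L^2$ applied to approximating $C^\infty_c$ test functions.

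Using the identity $(\sigma\cdot\nu_\pm)^2=\sigma_0$, which follows pointwise from the anticommutation relations \eqref{anti_commutation} together with $|\nu|=1$, and the fact that multiplication by the smooth matrix $\sigma\cdot\nu_\pm$ preserves $H^{\pm 1/2}(\Gamma;\mathbb{C}^2)$, I would then set $T^D_\pm f:=(\sigma\cdot\nu_\pm)\,\ell_f$. The resulting map is a bounded operator from $H(\sigma,\Omega_\pm)$ to $H^{-1/2}(\Gamma;\mathbb{C}^2)$, and feeding $f\in H^1(\Omega_\pm;\mathbb{C}^2)$ back into the Green identity gives $\ell_f=(\sigma\cdot\nu_\pm)\,T^D_{\pm,0}f$, so $T^D_\pm$ genuinely extends $T^D_{\pm,0}$. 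Uniqueness of this extension ``by continuity'' then follows from the density of $H^1(\Omega_\pm;\mathbb{C}^2)$ in $H(\sigma,\Omega_\pm)$.

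The main obstacle is purely geometric: producing a bounded right inverse $E$ of the trace on the unbounded domain $\Omega_\pm$, and proving the density of $H^1(\Omega_\pm;\mathbb{C}^2)$ in $H(\sigma,\Omega_\pm)$. I would handle both with a partition of unity $\{\chi_0,\chi_+,\chi_-\}$ subordinate to a bounded neighbourhood of the non-straight part of $\Gamma$ and to its two straight ends: on the compact piece, classical $C^\infty$ Lipschitz-boundary trace theory applies directly, while on each straight end, after a rigid motion reducing it to a half-plane, the explicit reflection extension together with interior mollification yield both the lifting and the density; the bi-Lipschitz estimate \eqref{equation_bi_Lipschitz} guarantees that the local constants assemble into global ones.
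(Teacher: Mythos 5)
Your proposal is correct and follows essentially the same route as the paper, which itself only cites \cite[Lemma~2.3]{BFSV17} and \cite[Lemma~2.3]{PvdB22}: those proofs are exactly this duality argument via the Green identity, a bounded right inverse of the $H^1$-trace, the identity $(\sigma\cdot\nu)^2=\sigma_0$, and density of $H^1(\Omega_\pm;\mathbb{C}^2)$ in $H(\sigma,\Omega_\pm)$, with the unbounded straight ends handled by localization exactly as you describe.
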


In the main part of the paper we will work with the densely defined closed symmetric restriction $S$ of $A_0^{m, c}$ onto $H^1_0(\mathbb{R}^2 \setminus \Gamma; \mathbb{C}^2)$, that is, 
\begin{equation} \label{def_S}
  S f = -i c (\sigma \cdot \nabla) f + m c^2 \sigma_3 f, \quad \dom S = H^1_0(\mathbb{R}^2 \setminus \Gamma; \mathbb{C}^2),
\end{equation}
and its adjoint
\begin{equation} \label{def_S_star}
  \begin{split}
    S^* f &= \big(-i c (\sigma \cdot \nabla) + m c^2 \sigma_3 \big) f_+ \oplus (-i c (\sigma \cdot \nabla) + m c^2 \sigma_3 \big) f_-, \\
    \dom S^* &= H(\sigma, \Omega_+) \oplus H(\sigma, \Omega_-).
  \end{split}
\end{equation}

\subsection{Integral operators for the Dirac operator} \label{section_integral_operators}

In this section, we introduce two families of integral operators  that will play an important role in the analysis of $A_{\eta, \tau, \lambda}^{m, c}$. 
For this, recall the fundamental solution $G_z^{m, c}$ in~\eqref{def_G_lambda} and note that  $\Gamma$ given in Hypothesis~\ref{hypothesis_Gamma} belongs to the class of curves considered in \cite{BHS23}. 

Following \cite[Appendix~C]{BHS23}, we introduce for 
$$z \in \rho(A_0^{m, c}) = \mathbb{C} \setminus \bigl((-\infty, -|m| c^2] \cup [|m| c^2, +\infty)  \bigr)$$ 
the operator
\begin{equation} \label{Phi_star}
  (\Phi_z^{m, c})^* := T^D (A_0^{m, c}-\overline{z})^{-1}: L^2(\mathbb{R}^2; \mathbb{C}^2) \rightarrow H^{1/2}(\Gamma; \mathbb{C}^2),
\end{equation}
which is well-defined and bounded due to the mapping properties of the Dirichlet trace $T^D: H^1(\mathbb{R}^2) \rightarrow H^{1/2}(\Gamma)$ and the resolvent of $A_0^{m, c}$.
Hence, also its anti-dual
\begin{equation} \label{def_Phi_z}
  \Phi_z^{m, c} := \big( (\Phi_z^{m, c})^* \big)': H^{-1/2}(\Gamma; \mathbb{C}^2) \rightarrow L^2(\mathbb{R}^2; \mathbb{C}^2)
\end{equation}
is well-defined and bounded. 
Using \eqref{Phi_star} and $G_z^{m, c}(x)^* = G_{\overline{z}}^{m, c}(-x)$ one finds that the action of $\Phi_z^{m, c}$ on $\varphi \in L^2(\Gamma; \mathbb{C}^2)$ is
\begin{equation*}
  \Phi_z^{m, c} \varphi(x) = \int_\Gamma G_z^{m, c}(x-y) \varphi(y) d \sigma(y), \quad x \in \mathbb{R}^2 \setminus \Gamma.
\end{equation*}
Moreover, as $\ker(\Phi_z^{m, c})^* = \ran (S - \overline{z})$, where $S$ is the symmetric operator in~\eqref{def_S}, and $\ran (\Phi_z^{m, c})^*$ is closed, we get that
\begin{equation} \label{equation_Phi_z_solution}
  \ran \Phi_z^{m, c} = \ker (S^* - z) \subset H(\sigma, \Omega_+) \oplus H(\sigma, \Omega_-).
\end{equation}
Since by~\eqref{relation_G_z_c} one has $\Phi_z^{m, c} = \frac{1}{c} \Phi_{z/c}^{m c, 1}$ and $\Phi_{z/c}^{m c, 1} = \Phi_{z/c}$ in the notation of \cite{BHS23} (when $m$ is replaced by $mc$), we conclude from \cite[Proposition~2.8]{BHS23} that $\Phi_z^{m, c}$ has the bounded restriction
\begin{equation} \label{mapping_properties_Phi_z}
  \widetilde{\Phi}_z^{m, c}: H^{1/2}(\Gamma; \mathbb{C}^2) \rightarrow H^1(\mathbb{R}^2 \setminus \Gamma; \mathbb{C}^2).
\end{equation}

The boundary integral operator formally given by
\begin{equation} \label{def_C_z_formal}
  \mathcal{C}_z^{m, c} \varphi(x) := \text{p.v.} \int_\Gamma G_z^{m, c}(x-y) \varphi(y) d \sigma(y), \quad x \in \Gamma, ~\varphi \in L^2(\Gamma; \mathbb{C}^2),
\end{equation}
where $z \in \rho(A_0^{m, c})$ and the integral is understood as the principal value, will be used throughout this paper. Since $\mathcal{C}_z^{m,c} = \frac{1}{c} \mathcal{C}_{z/c}^{m c, 1}$ holds by~\eqref{relation_G_z_c} and $\mathcal{C}_{z/c}^{m c, 1} = \mathcal{C}_{z/c}$ in the notation of \cite{BHS23} (when $m$ is replaced by $mc$), we get with \cite[Proposition~2.9]{BHS23} that $\mathcal{C}_z^{m, c}$ gives rise to a bounded operator in $H^r(\Gamma; \mathbb{C}^2)$ for $r \in [-\frac{1}{2}, \frac{1}{2}]$. Moreover, as $G_z^{m, c}(x)^* = G_{\overline{z}}^{m, c}(-x)$, $x \in \mathbb{R}^2 \setminus \{ 0 \}$, it can be shown that the adjoint of $\mathcal{C}_z^{m, c}$, viewed as an operator in $L^2(\Gamma; \mathbb{C}^2)$, is $(\mathcal{C}_z^{m, c})^* = \mathcal{C}_{\overline{z}}^{m, c}$.

Next, we explain that in a suitable sense $\mathcal{C}_z^{m, c}$ can be regarded as a compact perturbation of the same mapping $\mathcal{C}_z^{m, c, \Sigma}$ on the straight line $\Sigma = \mathbb{R} \times \{ 0 \}$, i.e. of the operator that acts on $\varphi \in L^2(\Sigma; \mathbb{C}^2)$ as
\begin{equation} \label{def_C_z_formal_straight_line}
  \mathcal{C}_z^{m, c, \Sigma} \varphi(s, 0) := \text{p.v.} \int_{-\infty}^\infty G_z^{m, c}(s-t, 0) \varphi(t, 0) d t, \quad s \in \mathbb{R}.
\end{equation}
In the formulation of the next proposition, the multiplication operator with the matrix-valued function
\begin{equation} \label{def_V}
  V = \begin{pmatrix} 1 & 0 \\ 0 & \overline{\boldsymbol{t}} \end{pmatrix},
\end{equation}
where $\boldsymbol{t} = t_1 + i t_2$ with $(t_1,t_2) = \dot{\gamma} \circ \gamma^{-1}$ being the unit tangent vector at $\Gamma$, will play an important role. Since $\boldsymbol{t}$ is $C^\infty$-smooth and $|\boldsymbol{t}|=1$, one clearly has that for any $r \in \mathbb{R}$ the multiplication with $V$ gives rise to a bounded and bijective operator in $H^r(\Gamma; \mathbb{C}^2)$. Moreover, recall the definition of the map $U_g$ from~\eqref{def_U} for any arc-length parametrization $g$ of an unbounded curve. The technical proof of Proposition~\ref{proposition_C_z_Gamma} is given in Appendix~\ref{appendix_int_op}.

\begin{prop} \label{proposition_C_z_Gamma}
  Let $z \in \rho(A_0^{m, c})=\mathbb{C} \setminus ((-\infty, -|m| c^2] \cup [|m| c^2, +\infty))$, $\Gamma$ be as in Hypothesis~\ref{hypothesis_Gamma}, and $x(t) = (t,0)$ be a parametrization of $\Sigma$. Then the operator
  \begin{equation*}
    U_{\gamma} V (\sigma \cdot \nu) \mathcal{C}_z^{m, c} (\sigma \cdot \nu) V^* U_\gamma^{-1} - U_x \sigma_2 \mathcal{C}_z^{m, c, \Sigma} \sigma_2 U_x^{-1}: H^{-1/2}(\mathbb{R}; \mathbb{C}^2) \rightarrow H^{1/2}(\mathbb{R}; \mathbb{C}^2)
  \end{equation*}
  is compact. 
\end{prop}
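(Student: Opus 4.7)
The plan is to decompose the kernel of the difference operator by a smooth partition of unity on $\mathbb{R}$, exploiting that $\Gamma$ coincides with a pair of straight rays outside a compact set. Choose $\chi_-+\chi_0+\chi_+\equiv 1$ with $\textup{supp}(\chi_0)\subset(-M-1,M+1)$ and $\textup{supp}(\chi_\pm)\subset\pm(M,\infty)$, and split the integral kernel $K(s,s')$ of the displayed operator as $K=\sum_{i,j\in\{-,0,+\}}\chi_i(s)K(s,s')\chi_j(s')$. The argument then treats the nine pieces separately, with the main work going into showing that those involving $\chi_0$ are well behaved on the diagonal.

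The central algebraic step is an exact identity on each straight ray. For $s,s'>M$ one has $\gamma(s)-\gamma(s')=(a_+,c_+)(s-s')$, $|\gamma(s)-\gamma(s')|=|s-s'|$, and $\dot\gamma\equiv(a_+,c_+)$, $\nu\equiv(c_+,-a_+)$, $\boldsymbol{t}\equiv a_++ic_+$ are constant. Using the anticommutation relations \eqref{anti_commutation}, one verifies $(\sigma\cdot\nu)(a_+\sigma_1+c_+\sigma_2)(\sigma\cdot\nu)=-(a_+\sigma_1+c_+\sigma_2)$, $(\sigma\cdot\nu)\sigma_0(\sigma\cdot\nu)=\sigma_0$, $(\sigma\cdot\nu)\sigma_3(\sigma\cdot\nu)=-\sigma_3$, while a direct multiplication with $V=\textup{diag}(1,\bar{\boldsymbol{t}})$ yields $V(a_+\sigma_1+c_+\sigma_2)V^*=\sigma_1$, $V\sigma_0 V^*=\sigma_0$, $V\sigma_3 V^*=\sigma_3$. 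Substituting these into the explicit formula \eqref{def_G_lambda} and using $\sigma_2\sigma_1\sigma_2=-\sigma_1$, $\sigma_2\sigma_3\sigma_2=-\sigma_3$, one obtains the pointwise identity
\begin{equation*}
V(s)(\sigma\cdot\nu(s))\,G_z^{m,c}(\gamma(s)-\gamma(s'))\,(\sigma\cdot\nu(s'))V(s')^{*}=\sigma_2\,G_z^{m,c}(s-s',0)\,\sigma_2
\end{equation*}
for all $s,s'>M$, and analogously for $s,s'<-M$. Hence the pieces $(+,+)$ and $(-,-)$ of the decomposition of $K$ vanish identically.

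For the pieces $(+,-)$ and $(-,+)$ the variables satisfy $|s-s'|\geq 2M$; combining \eqref{equation_bi_Lipschitz} with the exponential decay of $K_0,K_1$ (valid because $\textup{Re}(-i\sqrt{z^2/c^2-(mc)^2})>0$ for $z\in\rho(A_0^{m,c})$), the kernels and all their derivatives are smooth and exponentially decaying, giving Hilbert--Schmidt operators with smooth kernels, hence compact from $H^{-1/2}$ to $H^{1/2}$. For the five pieces meeting $\chi_0$, insert a further cut-off $\psi(s-s')$ separating $|s-s'|\geq 1$ (smooth and exponentially decaying in the unbounded direction, treated as above) from $|s-s'|\leq 1$. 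On the latter both variables lie in a bounded rectangle, and the analysis is local: Taylor-expanding $\gamma(s')=\gamma(s)+\dot\gamma(s)(s'-s)+O((s'-s)^2)$ and using $\dot\gamma\perp\ddot\gamma$ (from $|\dot\gamma|=1$), together with the expansions $K_1(w)=w^{-1}+O(w\log w)$ and $K_0(w)=-\log(w/2)+O(1)$ and the Pauli-matrix identities of the preceding paragraph applied pointwise at the common point $s$, one sees that the principal $(s-s')^{-1}$ Cauchy singularity of $G_z^{m,c}(\gamma(s)-\gamma(s'))$ cancels, after conjugation, against that of the straight-line kernel. The residual difference kernel has at most logarithmic diagonal singularity on a bounded support, and defines a compact operator from $H^{-1/2}$ to $H^{1/2}$ via the compact Sobolev embedding on the bounded interval.

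The main obstacle is this last local singularity analysis: showing that the exact cancellation which is transparent on the straight rays (where $V$ and $\nu$ are constant) persists to leading order at the diagonal in the curved region, despite the mismatch $V(s)\neq V(s')$ and $\nu(s)\neq\nu(s')$. The mismatch contributes factors of $O(s-s')$ that turn the $(s-s')^{-1}$ kernel into a bounded remainder; together with the $\log$ term from $K_0$, this gives the net regularity gain that upgrades the Calderón--Zygmund operator $\mathcal{C}_z^{m,c}$ (only bounded $H^{\pm 1/2}\to H^{\pm 1/2}$) into the compact difference map $H^{-1/2}\to H^{1/2}$ in the statement.
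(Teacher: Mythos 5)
Your decomposition and the core kernel analysis follow essentially the same route as the paper: the exact identity $V(\sigma\cdot\nu)G_z^{m,c}(\gamma(s)-\gamma(s'))(\sigma\cdot\nu)V^*=\sigma_2G_z^{m,c}(s-s',0)\sigma_2$ on each straight end (so the difference kernel vanishes when both arguments lie on the same ray), exponential off-diagonal decay via \eqref{equation_bi_Lipschitz} and the Macdonald-function asymptotics, and the cancellation of the Cauchy singularity near the diagonal in the compact region leaving an at most logarithmic singularity are exactly the ingredients of the paper's proof (which does this without a partition of unity, by checking directly that the kernel vanishes for $s,t>M$ and for $s,t<-M$ and is log-singular with log-singular first derivatives near the diagonal).

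The genuine gap is the final functional-analytic step. Asserting that a compactly supported kernel with logarithmic diagonal singularity ``defines a compact operator from $H^{-1/2}$ to $H^{1/2}$ via the compact Sobolev embedding on the bounded interval'' does not work: such a kernel gains exactly one derivative (the convolution with $\log|\cdot|$ has multiplier of order $|\xi|^{-1}$, not better), so the operator is bounded from $H^{-1/2}(\mathbb{R};\mathbb{C}^2)$ into $H^{1/2}$ but not into any $H^{1/2+\varepsilon}$, and Rellich-type embeddings only give compactness into spaces of strictly lower order than the one the operator maps into; boundedness into the target space $H^{1/2}$ plus compact support of the kernel is not enough. Moreover, to let the operator act on $H^{-1/2}$ at all you need quantitative control of the kernel in the \emph{second} variable (equivalently of the adjoint kernel), which your sketch never states. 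The paper closes precisely this gap with Lemma~\ref{lemma_int_op_compact}: one verifies the bounds \eqref{assumption_k} for the kernel \emph{and} its first derivative and for the adjoint kernel, deduces that the operator and its $s$-derivative are Hilbert--Schmidt (hence the map $L^2\rightarrow H^1$ is compact), obtains compactness of $H^{-1}\rightarrow L^2$ by passing to the anti-dual, and then interpolates \emph{compactness} between these two endpoint statements (using \cite{CW95}) to reach $H^{-1/2}\rightarrow H^{1/2}$. Your kernel estimates (including the derivative-level cancellation you mention) are strong enough to feed into such an argument, and the same remark applies to the off-diagonal pieces, where ``Hilbert--Schmidt, hence compact from $H^{-1/2}$ to $H^{1/2}$'' likewise needs the derivative bounds in both variables plus the duality/interpolation step rather than Hilbert--Schmidtness alone; but as written the concluding step of your proof would fail and needs to be replaced by an argument of this type.
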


\begin{remark}
  In fact, we will show in the proof of Proposition~\ref{proposition_C_z_Gamma} that the map $U_{\gamma} V (\sigma \cdot \nu) \mathcal{C}_z^{m, c} (\sigma \cdot \nu) V^* U_\gamma^{-1} - U_x \sigma_2 \mathcal{C}_z^{m, c, \Sigma} \sigma_2 U_x^{-1}$ can be extended, for any $r \in [-1, 0]$, to a compact operator from $H^{r}(\mathbb{R}; \mathbb{C}^2)$ to $H^{r+1}(\mathbb{R}; \mathbb{C}^2)$.
\end{remark}

Finally, we state a useful variant of the Plemelj-Sokhotskii formula, which relates the trace of $\Phi_z^{m, c} \varphi$ to $\mathcal{C}_z^{m, c} \varphi$; cf. \cite[Proposition~2.9~(ii)]{BHS23}. For a direct proof of this formula one can also follow line by line the proof of \cite[Proposition~3.4]{BHOP20} using the classical Plemelj-Sokhotskii formula for the Cauchy transform on unbounded curves, see, e.g.,  \cite[Proposition~4.6.1]{G14}. 

\begin{lem} \label{lemma_Plemelj_Sokhotskii}
  Let $z \in \rho(A_0^{m, c})=\mathbb{C} \setminus ((-\infty, -|m| c^2] \cup [|m| c^2, +\infty))$. Then one has for all $\varphi \in H^{-1/2}(\Gamma; \mathbb{C}^2)$  \begin{equation*}
    T^D_\pm \Phi_z^{m, c} \varphi = \mp \frac{i}{2 c} (\sigma \cdot \nu) \varphi + \mathcal{C}_z^{m, c} \varphi.
  \end{equation*}
\end{lem}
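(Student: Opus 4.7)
The plan is to follow line by line the strategy of \cite[Proposition~3.4]{BHOP20} (which was worked out there for bounded interaction supports), combined with the unbounded-curve version of the classical Plemelj–Sokhotskii formula from \cite[Proposition~4.6.1]{G14}. In a first step I establish the identity on a smooth dense subclass, and in a second step I extend it by continuity to all of $H^{-1/2}(\Gamma;\mathbb{C}^2)$.

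Take first $\varphi \in C_c^\infty(\Gamma;\mathbb{C}^2)$. Using the small-argument asymptotics $K_1(w)\sim 1/w$ and $K_0(w)\sim -\log w$ from \cite{AS84}, together with the exponential decay of $K_0,K_1$ at infinity (guaranteed by our square-root convention, under which $-i\sqrt{z^2/c^2-(mc)^2}$ has strictly positive real part for $z\in\rho(A_0^{m,c})$), I split
\[
G_z^{m,c}(x) \;=\; \frac{i}{2\pi c}\,\frac{\sigma\cdot x}{|x|^2} \;+\; R_z^{m,c}(x),
\]
where $R_z^{m,c}$ is at most logarithmically singular at the origin and smooth and exponentially decaying away from it. The integral operator with kernel $R_z^{m,c}(x-y)$ therefore sends $\varphi$ to a function that is continuous across $\Gamma$, and its two-sided traces coincide with the absolutely convergent integral over $\Gamma$. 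For the leading singular piece I write $(\sigma\cdot(x-y))/(2\pi|x-y|^2) = -(\sigma\cdot\nabla_x)E(x-y)$, with $E(x) = -\tfrac{1}{2\pi}\log|x|$ the Laplacian fundamental solution, decompose $\sigma\cdot\nabla_x = (\sigma\cdot\nu)(\nu\cdot\nabla_x) + (\sigma\cdot\mathbf{t})(\mathbf{t}\cdot\nabla_x)$, and invoke the standard single-layer jump relations: the tangential derivative of the single layer is continuous across $\Gamma$, whereas the normal derivative jumps as $\pm\tfrac{1}{2}\varphi$ plus an adjoint double-layer principal value (with the sign fixed by $\nu$ pointing out of $\Omega_+$ according to Hypothesis~\ref{hypothesis_Gamma}). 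This yields
\[
T^D_\pm\!\left[\,\frac{i}{2\pi c}\!\int_\Gamma\!\frac{\sigma\cdot(x-y)}{|x-y|^2}\varphi(y)\,d\sigma(y)\right] \;=\; \mp\frac{i}{2c}(\sigma\cdot\nu)\varphi \;+\; (\textup{p.v.\ part}),
\]
and the two principal-value parts (from the singular and regular pieces of $G_z^{m,c}$) combine by construction to $\mathcal{C}_z^{m,c}\varphi$, establishing the formula for $\varphi \in C_c^\infty(\Gamma;\mathbb{C}^2)$.

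In the final step I extend the identity to $H^{-1/2}(\Gamma;\mathbb{C}^2)$ by density. The subspace $C_c^\infty(\Gamma;\mathbb{C}^2)$ is dense there, and each ingredient of the identity acts boundedly on $H^{-1/2}(\Gamma;\mathbb{C}^2)$. Indeed, $\mathcal{C}_z^{m,c}$ is bounded by the discussion after \eqref{def_C_z_formal} and \cite[Proposition~2.9]{BHS23}; the composition $T^D_\pm\Phi_z^{m,c}$ is bounded because $\Phi_z^{m,c}: H^{-1/2}(\Gamma;\mathbb{C}^2)\to L^2(\mathbb{R}^2;\mathbb{C}^2)$ has range contained in $\ker(S^*-z)\subset \dom S^*$ by \eqref{equation_Phi_z_solution}, so that the graph norm of $\Phi_z^{m,c}\varphi$ is controlled by $(1+|z|)\|\Phi_z^{m,c}\varphi\|_{L^2}$ and Lemma~\ref{lemma_trace_theorem} applies; and multiplication by $(\sigma\cdot\nu)$ is bounded because $\nu$ is $C^\infty$-smooth with $|\nu|=1$. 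Passing to the limit concludes the proof.

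The main technical obstacle is justifying the classical single-layer jump relations at points of the unbounded $\Gamma$; this is a purely local matter. Near any $x_0\in\Gamma$ one cuts $\varphi$ off to a compact neighbourhood and reduces to the standard bounded $C^2$-curve theory, while the part of the integral supported far from $x_0$ contributes a function that is smooth near $x_0$ and hence has a common two-sided trace. An alternative, closer to the hint given by the authors, is to write the leading kernel $(\sigma\cdot x)/|x|^2$ in complex-variable form as a $2\times 2$ matrix built from the standard and conjugate Cauchy kernels $1/(x_1+ix_2)$ and $1/(x_1-ix_2)$, and to apply \cite[Proposition~4.6.1]{G14} componentwise, carefully tracking the orientation of $\Gamma$ and the sign of $\nu$.
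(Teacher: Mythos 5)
Your argument is correct and is essentially the route the paper itself indicates: it defers to \cite[Proposition~2.9~(ii)]{BHS23} and notes that a direct proof follows \cite[Proposition~3.4]{BHOP20} combined with the Plemelj--Sokhotskii formula on unbounded curves from \cite[Proposition~4.6.1]{G14}, which is precisely the decomposition into the Cauchy-type leading kernel plus a weakly singular remainder, the classical jump relations, and a density/boundedness extension to $H^{-1/2}(\Gamma;\mathbb{C}^2)$ that you carry out. The sign bookkeeping with $\nu$ pointing out of $\Omega_+$ and the square-root convention are handled consistently, so no changes are needed.
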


Eventually, we note that Lemma~\ref{lemma_Plemelj_Sokhotskii} allows to show, in the same way as in \cite[Lemma~3.3~(ii)]{AMV14}, 
the identity
\begin{equation} \label{C_z_inv}
  -4 c^2 \big( (\sigma \cdot \nu) \mathcal{C}_z^{m, c} \big)^2 = -4 c^2 \big(  \mathcal{C}_z^{m, c} (\sigma \cdot \nu) \big)^2 = I,\quad z \in \rho(A_0^{m, c}).
\end{equation}

\subsection{A boundary triple for $A_{\eta, \tau, \lambda}^{m, c}$} \label{section_bt}

In this section we show how the operator $A_{\eta, \tau, \lambda}^{m, c}$ defined in~\eqref{def_A_eta_tau_intro} can be described with the help of a boundary triple. The general definition and some useful properties of boundary triples are recalled in Appendix~\ref{appendix_bt}. As before, $\Gamma$, $\Omega_\pm$, and $\nu$ are as in Hypothesis~\ref{hypothesis_Gamma}, and we use for $f: \mathbb{R}^2 \rightarrow \mathbb{C}^2$ the notation $f_\pm := f \upharpoonright \Omega_\pm$. First, we introduce an auxiliary Dirac operator and discuss its properties. Recall that $A_0^{m, c}$ is the free Dirac operator given by~\eqref{def_free_op}, define the unitary operator
\begin{equation*}
  U: L^2(\mathbb{R}^2; \mathbb{C}^2) \rightarrow L^2(\mathbb{R}^2; \mathbb{C}^2), \quad U(f_+ \oplus f_-) = f_+ \oplus (-f_-),
\end{equation*}
and introduce in $L^2(\mathbb{R}^2; \mathbb{C}^2)$ the operator
\begin{equation} \label{def_B}
  B := U^* A_0^{m, c} U.
\end{equation}
In a more explicit form the operator $B$ is given by
\begin{equation*}
  \begin{split}
    B f &= \big( -i c (\sigma \cdot \nabla) + m c^2 \sigma_3 \big) f_+ \oplus \big( -i c (\sigma \cdot \nabla) + m c^2 \sigma_3 \big) f_-, \\
    \dom B &= \big\{ f = f_+ \oplus f_- \in H^1(\Omega_+; \mathbb{C}^2) \oplus H^1(\Omega_-; \mathbb{C}^2): T^D_+ f_+ + T^D_- f_- = 0 \big\}.
  \end{split}
\end{equation*}
The basic properties of $B$ are stated in the following lemma. Recall that $\widetilde{\Phi}_z^{m, c}$ and $\mathcal{C}_z^{m, c}$ are the operators defined by~\eqref{mapping_properties_Phi_z} and~\eqref{def_C_z_formal}, respectively.

\begin{lem} \label{lemma_B}
  The operator $B$ is self-adjoint in $L^2(\mathbb{R}^2; \mathbb{C}^2)$, its spectrum is given by $\sigma(B) = \sigma_\textup{ess}(B) = \sigma(A_0^{m, c}) = (-\infty, -|m| c^2] \cup [|m| c^2, +\infty)$, and for $z \in \rho(B)$ one has 
  \begin{equation*}
    (B - z)^{-1} = (A_0^{m, c} - z)^{-1} + 4 c^2 \widetilde{\Phi}_z^{m, c} (\sigma \cdot \nu) \mathcal{C}_z^{m, c} (\sigma \cdot \nu) (\Phi_{\overline{z}}^{m, c})^*.
  \end{equation*}
\end{lem}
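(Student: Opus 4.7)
The plan has two stages. The self-adjointness and the spectral identity are immediate from the definition $B=U^*A_0^{m,c}U$: since $U$ is unitary (in fact an involution), $B$ is unitarily equivalent to $A_0^{m,c}$ and therefore self-adjoint with identical (essential) spectrum. The explicit description of $\dom B$ is a matter of checking that, for $g=g_+\oplus g_-$ with $g_\pm\in H^1(\Omega_\pm;\mathbb{C}^2)$, the function $Ug=g_+\oplus(-g_-)$ lies in $H^1(\mathbb{R}^2;\mathbb{C}^2)=\dom A_0^{m,c}$ if and only if the one-sided traces of $g_+$ and $-g_-$ match along $\Gamma$, i.e. $T^D_+g_++T^D_-g_-=0$; the stated action of $B$ then follows from the componentwise action of $A_0^{m,c}$.

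For the resolvent formula, I would fix $z\in\rho(A_0^{m,c})=\rho(B)$ and $g\in L^2(\mathbb{R}^2;\mathbb{C}^2)$, set
\[
\varphi := 4c^2(\sigma\cdot\nu)\,\mathcal{C}_z^{m,c}\,(\sigma\cdot\nu)(\Phi_{\overline{z}}^{m,c})^*g
\quad\text{and}\quad
f := (A_0^{m,c}-z)^{-1}g+\widetilde{\Phi}_z^{m,c}\varphi,
\]
and show that $f\in\dom B$ and $(B-z)f=g$. By \eqref{Phi_star}, the boundedness of $\mathcal{C}_z^{m,c}$ on $H^{1/2}(\Gamma;\mathbb{C}^2)$, and the mapping property \eqref{mapping_properties_Phi_z}, we have $\varphi\in H^{1/2}(\Gamma;\mathbb{C}^2)$ and $f\in H^1(\Omega_+;\mathbb{C}^2)\oplus H^1(\Omega_-;\mathbb{C}^2)$. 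The key step is the transmission condition: since $(A_0^{m,c}-z)^{-1}g$ is $H^1$ across $\Gamma$, both of its one-sided traces equal $(\Phi_{\overline{z}}^{m,c})^*g$, while summing the two Plemelj--Sokhotskii identities of Lemma~\ref{lemma_Plemelj_Sokhotskii} gives $T^D_+\widetilde{\Phi}_z^{m,c}\varphi+T^D_-\widetilde{\Phi}_z^{m,c}\varphi=2\mathcal{C}_z^{m,c}\varphi$. Substituting the definition of $\varphi$ and invoking the reciprocal-square identity $-4c^2(\mathcal{C}_z^{m,c}(\sigma\cdot\nu))^2=I$ from \eqref{C_z_inv}, the sum $T^D_+f_++T^D_-f_-$ collapses to $2(\Phi_{\overline{z}}^{m,c})^*g-2(\Phi_{\overline{z}}^{m,c})^*g=0$, so $f\in\dom B$.

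Acting with $B-z$ is then routine: on each $\Omega_\pm$, $B$ acts as the free Dirac expression, so $(B-z)(A_0^{m,c}-z)^{-1}g=g$; and by \eqref{equation_Phi_z_solution} one has $(S^*-z)\Phi_z^{m,c}\varphi=0$, which gives $(B-z)\widetilde{\Phi}_z^{m,c}\varphi=0$. Combining these yields $(B-z)f=g$, and since $B$ is self-adjoint with $z\in\rho(B)$, the map $B-z$ is a bijection onto $L^2(\mathbb{R}^2;\mathbb{C}^2)$; hence the right-hand side of the claimed formula coincides with $(B-z)^{-1}$. The only non-routine step is the cancellation in the transmission condition, which is driven by the algebraic identity \eqref{C_z_inv} and is precisely what forces the coefficient $4c^2$ in the resolvent formula; everything else is a direct application of the Plemelj--Sokhotskii formula and the defining properties of $\Phi_z^{m,c}$.
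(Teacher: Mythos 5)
Your proposal is correct and follows essentially the same route as the paper: unitary equivalence via $U$ for self-adjointness and the spectrum, then verifying that the candidate resolvent maps into $\dom B$ by combining the Plemelj--Sokhotskii formula of Lemma~\ref{lemma_Plemelj_Sokhotskii} with the identity \eqref{C_z_inv} to get the trace cancellation $T^D_+f_++T^D_-f_-=0$, and finally applying $B-z$ using \eqref{equation_Phi_z_solution} together with the injectivity (bijectivity) of $B-z$. The only difference is cosmetic bookkeeping of the factor $4c^2$ inside the density $\varphi$, so no further comment is needed.
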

\begin{proof}
  Since the operator $U$ is unitary, it follows from the definition of $B$ in~\eqref{def_B} and the properties of $A_0^{m, c}$ in~\eqref{def_free_op} that $B$ is self-adjoint 
  in $L^2(\mathbb{R}^2; \mathbb{C}^2)$ and that $\sigma(B) = \sigma(A_0^{m, c})$. It remains to show the claimed resolvent formula. For this, fix $z \in \rho(B) = \rho(A_0^{m, c})$ and $f \in L^2(\mathbb{R}^2; \mathbb{C}^2)$, and define
  \begin{equation*}
    g := (A_0^{m, c} - z)^{-1}f + 4 c^2 \widetilde{\Phi}_z^{m, c} (\sigma \cdot \nu) \mathcal{C}_z^{m, c} (\sigma \cdot \nu) (\Phi_{\overline{z}}^{m, c})^* f.
  \end{equation*}
  Then, by the mapping properties of $(A_0^{m, c} - z)^{-1}$, $\widetilde{\Phi}_z^{m, c}$, $\mathcal{C}_z^{m, c}$, and $(\Phi_{\overline{z}}^{m, c})^*$ in~\eqref{mapping_properties_Phi_z}, \eqref{def_C_z_formal}, and~\eqref{Phi_star}, respectively, we get $g \in H^1(\Omega_+; \mathbb{C}^2) \oplus H^1(\Omega_-; \mathbb{C}^2)$. Moreover, using Lemma~\ref{lemma_Plemelj_Sokhotskii} and~\eqref{C_z_inv}, we find 
  \begin{equation*}
    \begin{split}
      T^D_+ g_+ + T^D_- g_- &= 2 T^D (A_0^{m, c} - z)^{-1} f + 8 c^2 \mathcal{C}_z^{m, c} (\sigma \cdot \nu) \mathcal{C}_z^{m, c} (\sigma \cdot \nu) (\Phi_{\overline{z}}^{m, c})^* f \\
      &= 2 T^D (A_0^{m, c} - z)^{-1} f - 2 (\Phi_{\overline{z}}^{m, c})^* f = 0
    \end{split}
  \end{equation*}
  and hence, $g \in \dom B$. Finally, equation~\eqref{equation_Phi_z_solution} yields
  \begin{equation*}
    ((B-z) g)_\pm =  (-i c (\sigma \cdot \nabla) + m c^2 \sigma_3 - z \sigma_0) \big((A_0^{m, c} - z)^{-1} f \big)_\pm = f_\pm.
  \end{equation*}
  Taking the injectivity of $B-z$ and the definition of $g$ into account, we see that also the claimed resolvent formula is true.
\end{proof}

Next, we show how a boundary triple that is useful to study $A_{\eta, \tau, \lambda}^{m, c}$ is given, see also \cite[Theorem~4.8]{BHSS22} and \cite{BHOP20, CLMT21} for similar constructions. In order to formulate the result, recall the definition of the map $\Lambda$ in~\eqref{def_Lambda}, of the free Dirac operator $A_0^{m, c}$ in~\eqref{def_free_op}, of its restriction $S$ and extension $S^*$ in~\eqref{def_S} and~\eqref{def_S_star}, respectively, of the boundary integral operator $\mathcal{C}_z^{m, c}$ in~\eqref{def_C_z_formal}, and of the unitary map $V$ in~\eqref{def_V}.

\begin{prop} \label{proposition_boundary_triple}
  Let $\zeta \in \rho(A_0^{m, c})$. Define the maps $\Gamma_0, \Gamma_1: \dom S^* \rightarrow L^2(\Gamma; \mathbb{C}^2)$ acting on $f = f_+ \oplus f_- \in \dom S^*$ by
  \begin{equation*}
    \begin{split}
      \Gamma_0 f &:= \frac{1}{2} \Lambda^{-1} V \big( T^D_+ f_+ + T^D_- f_- \big), \\
      \Gamma_1 f &:= -\Lambda \big( i c V (\sigma \cdot \nu) (T^D_+ f_+ - T^D_- f_-) \\
      &\qquad \qquad +c^2 V (\sigma \cdot \nu) (\mathcal{C}_\zeta^{m, c} + \mathcal{C}_{\overline{\zeta}}^{m, c}) (\sigma \cdot \nu) (T^D_+ f_+ + T^D_- f_-) \big). 
    \end{split}
  \end{equation*}
  Then $\{ L^2(\Gamma; \mathbb{C}^2), \Gamma_0, \Gamma_1 \}$ is a boundary triple for $S^*$. Moreover, the associated $\gamma$-field and Weyl function are given by
  \begin{equation*}
    \rho(A_0^{m, c}) \ni z \mapsto \gamma(z) = -4 c^2 \Phi_z^{m, c} (\sigma \cdot \nu) \mathcal{C}_z^{m, c} (\sigma \cdot \nu) V^* \Lambda
  \end{equation*}
  and
  \begin{equation*}
    \rho(A_0^{m, c}) \ni z \mapsto M(z) = 4 c^2 \Lambda V (\sigma \cdot \nu) \left( \mathcal{C}_z^{m, c} - \frac{1}{2} (\mathcal{C}_\zeta^{m, c} + \mathcal{C}_{\overline{\zeta}}^{m, c}) \right) (\sigma \cdot \nu) V^* \Lambda,
  \end{equation*}
respectively.  
\end{prop}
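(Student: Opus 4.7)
The plan is to verify the two abstract requirements for an ordinary boundary triple -- Green's identity and surjectivity of $(\Gamma_0,\Gamma_1)$ -- and then to identify the $\gamma$-field and the Weyl function by direct computation using Lemma~\ref{lemma_Plemelj_Sokhotskii} together with the identity $-4c^{2}\bigl(\mathcal{C}_{z}^{m,c}(\sigma\cdot\nu)\bigr)^{2}=I$ from \eqref{C_z_inv}.

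For Green's identity, approximating $f,g\in H(\sigma,\Omega_\pm)$ by smooth functions in the graph norm (as in the trace extension of Lemma~\ref{lemma_trace_theorem}) and integrating by parts for $-ic(\sigma\cdot\nabla)$, with outward unit normals $\nu_{\Omega_\pm}=\pm\nu$, yields
\begin{equation*}
  (S^{*}f,g)-(f,S^{*}g) = -ic\bigl\langle(\sigma\cdot\nu)T^{D}_{+}f_{+},T^{D}_{+}g_{+}\bigr\rangle + ic\bigl\langle(\sigma\cdot\nu)T^{D}_{-}f_{-},T^{D}_{-}g_{-}\bigr\rangle
\end{equation*}
where the pairings on the right are the continuous extensions of the $L^{2}(\Gamma;\mathbb{C}^{2})$ inner product to the $H^{1/2}$--$H^{-1/2}$ duality. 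Polarizing via $\alpha_{\pm}=\tfrac12\bigl((\alpha_{+}+\alpha_{-})\pm(\alpha_{+}-\alpha_{-})\bigr)$ recasts this expression as a symmetric sum in (sum,\,diff) of traces. On the other hand, using that $\Lambda$ is formally self-adjoint with $(\Lambda a,\Lambda^{-1}b)_{L^{2}}=\langle a,b\rangle$ extended to the $H^{\pm 1/2}$ duality, that $V^{*}V=I$, and that $(\mathcal{C}_{\zeta}^{m,c})^{*}=\mathcal{C}_{\overline{\zeta}}^{m,c}$ so that $\mathcal{C}_{\zeta}^{m,c}+\mathcal{C}_{\overline{\zeta}}^{m,c}$ is self-adjoint, a direct expansion shows that $(\Gamma_{1}f,\Gamma_{0}g)-(\Gamma_{0}f,\Gamma_{1}g)$ coincides with the same (sum,\,diff)-symmetric expression: the two contributions involving $\mathcal{C}_{\zeta}^{m,c}+\mathcal{C}_{\overline{\zeta}}^{m,c}$ appear identically in both pairings and therefore cancel in the difference, while the $\Lambda$'s and $V$'s drop out cleanly.

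For surjectivity, given $(\varphi_{0},\varphi_{1})\in L^{2}(\Gamma;\mathbb{C}^{2})^{2}$ the definitions of $\Gamma_{0},\Gamma_{1}$ force
\begin{equation*}
  T^{D}_{+}f_{+}+T^{D}_{-}f_{-} = 2V^{*}\Lambda\varphi_{0},\qquad T^{D}_{+}f_{+}-T^{D}_{-}f_{-} = \tfrac{1}{ic}(\sigma\cdot\nu)V^{*}\bigl[-\Lambda^{-1}\varphi_{1}-2c^{2}V(\sigma\cdot\nu)(\mathcal{C}_{\zeta}^{m,c}+\mathcal{C}_{\overline{\zeta}}^{m,c})(\sigma\cdot\nu)V^{*}\Lambda\varphi_{0}\bigr],
\end{equation*}
both of which lie in $H^{-1/2}(\Gamma;\mathbb{C}^{2})$ by the mapping properties recorded in Section~\ref{section_integral_operators}. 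It thus suffices to verify that $(T^{D}_{+},T^{D}_{-})\colon H(\sigma,\Omega_{+})\oplus H(\sigma,\Omega_{-})\to H^{-1/2}(\Gamma;\mathbb{C}^{2})^{2}$ is surjective. This is provided by Lemma~\ref{lemma_Plemelj_Sokhotskii}: with $\psi\in H^{-1/2}(\Gamma;\mathbb{C}^{2})$ the difference $T^{D}_{+}\Phi_{z}^{m,c}\psi-T^{D}_{-}\Phi_{z}^{m,c}\psi=-\tfrac{i}{c}(\sigma\cdot\nu)\psi$ can be set to any element of $H^{-1/2}$ using $(\sigma\cdot\nu)^{2}=I$, while the sum can then be adjusted by a suitable element of $\dom B\subset H^{1}(\Omega_{+};\mathbb{C}^{2})\oplus H^{1}(\Omega_{-};\mathbb{C}^{2})$ coming from Lemma~\ref{lemma_B}. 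Together with Green's identity, this yields that $\{L^{2}(\Gamma;\mathbb{C}^{2}),\Gamma_{0},\Gamma_{1}\}$ is a boundary triple for $S^{*}$.

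Finally, the $\gamma$-field and Weyl function are computed by direct verification. For $\varphi\in L^{2}(\Gamma;\mathbb{C}^{2})$ set
\begin{equation*}
  f_{z} := -4c^{2}\,\Phi_{z}^{m,c}(\sigma\cdot\nu)\mathcal{C}_{z}^{m,c}(\sigma\cdot\nu)V^{*}\Lambda\varphi,
\end{equation*}
so that $f_{z}\in\ker(S^{*}-z)$ by \eqref{equation_Phi_z_solution}. Lemma~\ref{lemma_Plemelj_Sokhotskii} combined with $(\sigma\cdot\nu)^{2}=I$ and $-4c^{2}(\mathcal{C}_{z}^{m,c}(\sigma\cdot\nu))^{2}=I$ from \eqref{C_z_inv} gives
\begin{equation*}
  T^{D}_{\pm}f_{z} = \pm 2ic\,\mathcal{C}_{z}^{m,c}(\sigma\cdot\nu)V^{*}\Lambda\varphi + V^{*}\Lambda\varphi,
\end{equation*}
so $T^{D}_{+}f_{z}+T^{D}_{-}f_{z}=2V^{*}\Lambda\varphi$ and $T^{D}_{+}f_{z}-T^{D}_{-}f_{z}=4ic\,\mathcal{C}_{z}^{m,c}(\sigma\cdot\nu)V^{*}\Lambda\varphi$. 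The first identity yields $\Gamma_{0}f_{z}=\varphi$, hence $\gamma(z)\varphi=f_{z}$; substituting both identities into the formula for $\Gamma_{1}$ produces exactly $M(z)\varphi=4c^{2}\Lambda V(\sigma\cdot\nu)\bigl[\mathcal{C}_{z}^{m,c}-\tfrac12(\mathcal{C}_{\zeta}^{m,c}+\mathcal{C}_{\overline{\zeta}}^{m,c})\bigr](\sigma\cdot\nu)V^{*}\Lambda\varphi$. The main obstacle is keeping track of the $H^{\pm1/2}$ dualities through the unbounded operator $\Lambda$ in Step~1, and justifying the surjectivity of the trace pair onto $H^{-1/2}\times H^{-1/2}$ in Step~2; once these are settled, Step~3 reduces to algebraic manipulation via \eqref{C_z_inv}.
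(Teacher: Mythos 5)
Your Step~3 (the identification of $\gamma(z)$ and $M(z)$ via Lemma~\ref{lemma_Plemelj_Sokhotskii} and \eqref{C_z_inv}) is correct and is essentially the same algebra as in the paper; the difference is that the paper does not verify the boundary-triple axioms by hand but feeds the auxiliary self-adjoint operator $B$ of Lemma~\ref{lemma_B} and the map $\mathcal{T}f=-ic\Lambda V(\sigma\cdot\nu)(T^D_+f_+-T^D_-f_-)$ into the abstract construction of Proposition~\ref{proposition_boundary_triple_singular_perturbation}, which then delivers Green's identity, surjectivity, and the formulas at once. Your direct verification has genuine gaps exactly at the points this detour is designed to avoid. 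In Step~1 the asserted identity with the two separate pairings $\langle(\sigma\cdot\nu)T^D_\pm f_\pm,T^D_\pm g_\pm\rangle$ is not meaningful on $\dom S^*$: for general $f,g\in H(\sigma,\Omega_\pm)$ both traces lie only in $H^{-1/2}(\Gamma;\mathbb{C}^2)$, so neither term is an $H^{1/2}$--$H^{-1/2}$ duality and neither extends by continuity from smooth functions; only suitable combinations of the boundary terms converge. Related to this, you never check that $\Gamma_1$ maps $\dom S^*$ into $L^2(\Gamma;\mathbb{C}^2)$ at all: for $f=\Phi_\zeta^{m,c}\psi$ with $\psi\in H^{-1/2}(\Gamma;\mathbb{C}^2)$ the bracket inside $\Gamma_1$ is a priori only in $H^{-1/2}$, and one needs the cancellation coming from \eqref{C_z_inv} together with the smoothing of $\mathcal{C}_{\overline{\zeta}}^{m,c}-\mathcal{C}_\zeta^{m,c}$ (equivalently, the decomposition $\dom S^*=\dom B\,\dot{+}\,\ker(S^*-\zeta)$, through which $\Gamma_1 f=\tfrac12\mathcal{T}(f_\zeta+f_{\overline{\zeta}})$ with $f_\zeta,f_{\overline{\zeta}}\in\dom B\subset H^1$) to see that it actually lies in $H^{1/2}$.

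Step~2 fails as stated. The joint trace map $(T^D_+,T^D_-)$ on $H(\sigma,\Omega_+)\oplus H(\sigma,\Omega_-)$ is \emph{not} surjective onto $H^{-1/2}\times H^{-1/2}$: writing $f=f_B+\Phi_\zeta^{m,c}\psi$ with $f_B\in\dom B$ and $\psi\in H^{-1/2}$, Lemma~\ref{lemma_Plemelj_Sokhotskii} gives $T^D_+f_++T^D_-f_-=2\mathcal{C}_\zeta^{m,c}\psi$ and $T^D_+f_+-T^D_-f_-=2T^D_+(f_B)_+-\tfrac{i}{c}(\sigma\cdot\nu)\psi$, so (since $\mathcal{C}_\zeta^{m,c}$ is boundedly invertible on $H^{-1/2}$ by \eqref{C_z_inv}) the attainable pairs are exactly those whose difference minus $2ic\,\mathcal{C}_\zeta^{m,c}(\sigma\cdot\nu)$ applied to the sum lies in $H^{1/2}$ --- a genuine constraint. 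Moreover your mechanism is inverted: elements of $\dom B$ have vanishing trace sum, so they cannot be used to ``adjust the sum''; they only shift the difference by $H^{1/2}$ functions, while the defect element $\Phi_\zeta^{m,c}\psi$ fixes both combinations simultaneously. The surjectivity of $(\Gamma_0,\Gamma_1)$ is nevertheless true, because for the specific pair produced by $(\varphi_0,\varphi_1)$ the leading contributions cancel via \eqref{C_z_inv} and what remains involves $\mathcal{C}_{\overline{\zeta}}^{m,c}-\mathcal{C}_\zeta^{m,c}$, which gains half an order of regularity; but this compatibility argument is precisely what is missing from your write-up (and it is not a statement recorded in the paper either --- it is implicit in the abstract scheme the paper invokes). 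As it stands, your proof of the boundary-triple property is incomplete, while your computation of the $\gamma$-field and Weyl function would be fine once that property is established.
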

\begin{proof}
  We apply the construction from the end of Appendix~\ref{appendix_bt} leading to Proposition~\ref{proposition_boundary_triple_singular_perturbation} with the self-adjoint operator $B$ in \eqref{def_B} and the boundary mapping $\mathcal{T}: \dom B \rightarrow L^2(\Gamma; \mathbb{C}^2)$ given by 
  \begin{equation}\label{ttt}
    \mathcal{T} f = -i c \Lambda V (\sigma \cdot \nu) (T^D_+ f_+ - T^D_- f_-), \quad f \in \dom B.
  \end{equation}
  Since $T^D_\pm: H^1(\Omega_\pm; \mathbb{C}^2) \rightarrow H^{1/2}(\Gamma; \mathbb{C}^2)$ is surjective, $V$ and $\sigma \cdot \nu$ give rise to bijective maps in $H^{1/2}(\Gamma; \mathbb{C}^2)$ (these functions are pointwise invertible and $C^\infty$-smooth), and $\Lambda: H^{1/2}(\Gamma; \mathbb{C}^2) \rightarrow L^2(\Gamma; \mathbb{C}^2)$ is bijective by construction~\eqref{def_Lambda}, the operator $\mathcal{T}$ is surjective. Moreover, as $\dom B$ is a closed subspace of $H^1(\mathbb{R}^2 \setminus \Gamma; \mathbb{C}^2)$, the embedding of $\dom B$ equipped with the norm inherited from $H^1(\mathbb{R}^2 \setminus \Gamma; \mathbb{C}^2)$ into $\dom B$ equipped with the graph norm is bounded and bijective. Therefore,
  the graph norm induced by $B$ and the norm in $H^1(\mathbb{R}^2 \setminus \Gamma; \mathbb{C}^2)$ are equivalent on $\dom B$. Hence, by the properties of $T^D_\pm$, $\sigma \cdot \nu$, $V$, and $\Lambda$ the map $\mathcal{T}$ is continuous. Furthermore, $\ker \mathcal{T} = H^1_0(\mathbb{R}^2 \setminus \Sigma; \mathbb{C}^2)$ is dense in $L^2(\mathbb{R}^2; \mathbb{C}^2)$. Next, with Lemma~\ref{lemma_Plemelj_Sokhotskii} and Lemma~\ref{lemma_B}, one finds 
  \begin{equation}\label{ggg}
    \mathcal{G}_z = (\mathcal{T} (B - \overline{z})^{-1})^* = -4 c^2 \Phi_z^{m, c} (\sigma \cdot \nu) \mathcal{C}_z^{m, c} (\sigma \cdot \nu) V^* \Lambda
  \end{equation}
  for $z\in\rho(A_0^{m, c})=\rho(B)$. Recall that $\zeta \in \rho(B)$.
  For $f = f_\zeta + \mathcal{G}_\zeta\xi$ with $f_\zeta\in\dom B$ and $\xi \in L^2(\Gamma; \mathbb{C}^2)$ it follows from \eqref{ggg}, 
  Lemma~\ref{lemma_Plemelj_Sokhotskii}, and~\eqref{C_z_inv} that
  \begin{equation}
  \begin{split}
  &\frac{1}{2} \Lambda^{-1} V \big( T^D_+  + T^D_-  \big)(f_\zeta + \mathcal{G}_\zeta \xi)\\
  &\qquad\qquad =-2c^2 \Lambda^{-1} V \big( T^D_+  + T^D_-  \big)\Phi_\zeta^{m, c} (\sigma \cdot \nu) \mathcal{C}_\zeta^{m,c}(\sigma \cdot \nu) V^* \Lambda\xi\\
  &\qquad\qquad =-4c^2 \Lambda^{-1} V \big(\mathcal{C}_\zeta^{m,c}(\sigma \cdot \nu)\big)^2 V^* \Lambda\xi\\
  &\qquad \qquad=\xi,
  \end{split}
  \end{equation}
  and hence we obtain the representation of $\Gamma_0$ from Proposition~\ref{proposition_boundary_triple_singular_perturbation}. In order to 
  obtain the formula for $\Gamma_1$ we note first that the mapping $\mathcal T$ in \eqref{ttt} can be extended onto $\dom S^*$. Using 
  \eqref{ttt}, \eqref{ggg}, and Lemma~\ref{lemma_Plemelj_Sokhotskii}  we then compute
  \begin{equation}\label{soistes}
  \begin{split}
   \mathcal T\mathcal G_z\xi&=4ic^3\Lambda V (\sigma \cdot \nu) (T^D_+  - T^D_-)\Phi_z^{m, c} (\sigma \cdot \nu) \mathcal{C}_z^{m, c} (\sigma \cdot \nu) V^* \Lambda\xi\\
   &=4c^2\Lambda V (\sigma \cdot \nu)  \mathcal{C}_z^{m, c} (\sigma \cdot \nu) V^* \Lambda\xi
   \end{split}
  \end{equation}
  for $z\in\rho(A_0^{m, c})=\rho(B)$ and $\xi \in L^2(\Gamma; \mathbb{C}^2)$. Now
  consider $f = f_\zeta + \mathcal{G}_\zeta\xi= f_{\overline\zeta} + \mathcal{G}_{\overline\zeta}\xi$ and observe that 
  by Proposition~\ref{proposition_boundary_triple_singular_perturbation}
  \begin{equation}\label{gamma1}
   \Gamma_1 f=\frac{1}{2}\mathcal T (f_\zeta+f_{\overline\zeta})=\frac{1}{2}\mathcal T (f-\mathcal{G}_\zeta \xi + f-\mathcal{G}_{\overline\zeta} \xi)=\mathcal T f
   -\frac{1}{2}\mathcal T \big(\mathcal{G}_\zeta + \mathcal{G}_{\overline\zeta}\big) \xi.
  \end{equation}
  Inserting $\mathcal Tf$ and the expressions $\mathcal T\mathcal G_\zeta\xi$, $\mathcal T\mathcal G_{\overline\zeta}\xi$ from \eqref{soistes} in \eqref{gamma1}, and making use of 
  $\xi=\Gamma_0 f= \frac{1}{2} \Lambda^{-1} V( T^D_+ f_+ + T^D_- f_- )$ we obtain the representation of $\Gamma_1$. 
  Finally, the expressions for the $\gamma$-field and Weyl function follow immediately from $\gamma(z)=\mathcal G_z$ and $M(z)=\mathcal T(\mathcal G_z-\tfrac{1}{2}(\mathcal G_\zeta+
  \mathcal G_{\overline\zeta}))$ in Proposition~\ref{proposition_boundary_triple_singular_perturbation}.
\end{proof}

In the following proposition we describe the operator $A_{\eta, \tau, \lambda}^{m, c}$ with the help of the boundary triple from the previous proposition. For similar arguments see \cite[Section~6]{BHSS22} and also \cite[Proposition~6.3]{CLMT21} or \cite[Proposition~4.3]{BHOP20}. Recall that $V$ is given by~\eqref{def_V} and define for $\eta, \tau, \lambda \in \mathbb{R}$ the matrix
\begin{equation} \label{def_B_matrix}
  F := \begin{pmatrix} \eta + \tau & \lambda \\ \lambda & \eta - \tau \end{pmatrix},
\end{equation}
implying 
\begin{equation} \label{eq:VF_identity}
V^* F V = \eta \sigma_0 + \tau \sigma_3 + i \lambda (\sigma \cdot \nu) \sigma_3.
\end{equation}

\begin{prop} \label{proposition_A_eta_tau_bt}
  Let $\eta, \tau, \lambda \in \mathbb{R}$ and define in $L^2(\Gamma; \mathbb{C}^2)$ the operator
  \begin{equation} \label{def_Theta1}
    \begin{split}
      \Theta \varphi &:= \Lambda \big[ F - 2 c^2 V (\sigma \cdot \nu) (\mathcal{C}_\zeta^{m, c} + \mathcal{C}_{\overline{\zeta}}^{m, c}) (\sigma \cdot \nu) V^* \big] \Lambda \varphi, \\
      \dom \Theta &= \big \{ \varphi \in L^2(\Gamma; \mathbb{C}^2): \\
      &\qquad \quad \big[ F - 2 c^2 V (\sigma \cdot \nu) (\mathcal{C}_\zeta^{m, c} + \mathcal{C}_{\overline{\zeta}}^{m, c}) ( \sigma \cdot \nu) V^* \big] \Lambda \varphi \in H^{1/2}(\Gamma; \mathbb{C}^2) \big\}.
    \end{split}
  \end{equation}
  Then $A_{\eta, \tau, \lambda}^{m, c} = S^* \upharpoonright \ker(\Gamma_1 - \Theta \Gamma_0)$.
\end{prop}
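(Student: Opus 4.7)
The plan is to prove the proposition by direct algebraic manipulation using the explicit formulas for the boundary maps and for $\Theta$. Since $S^* \upharpoonright \ker(\Gamma_1 - \Theta\Gamma_0)$ acts on $f \in \dom S^* = H(\sigma,\Omega_+) \oplus H(\sigma,\Omega_-)$ as $S^* f = (-ic(\sigma\cdot\nabla) + mc^2\sigma_3) f_+ \oplus (-ic(\sigma\cdot\nabla) + mc^2\sigma_3)f_-$, which coincides with the action of $A_{\eta,\tau,\lambda}^{m,c}$ on its domain, it suffices to show that the domains agree. Both are contained in $\dom S^*$, so the task reduces to proving that for each $f \in \dom S^*$, the boundary condition in~\eqref{def_A_eta_tau_intro} is equivalent to the two conditions $\Gamma_0 f \in \dom\Theta$ and $\Gamma_1 f = \Theta\Gamma_0 f$.

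For the main calculation, insert the explicit formulas for $\Gamma_0$, $\Gamma_1$ from Proposition~\ref{proposition_boundary_triple} and for $\Theta$ from~\eqref{def_Theta1}. Using that $V$ is pointwise unitary ($V^*V = \sigma_0$) so that $V(\sigma\cdot\nu)(\mathcal{C}_\zeta^{m,c}+\mathcal{C}_{\overline\zeta}^{m,c})(\sigma\cdot\nu)V^* \cdot V(T^D_+f_+ + T^D_-f_-)$ simplifies to $V(\sigma\cdot\nu)(\mathcal{C}_\zeta^{m,c}+\mathcal{C}_{\overline\zeta}^{m,c})(\sigma\cdot\nu)(T^D_+f_+ + T^D_-f_-)$, the Cauchy-type terms cancel and we arrive at
\begin{equation*}
  \Gamma_1 f - \Theta\Gamma_0 f \;=\; -\Lambda\bigl[\,icV(\sigma\cdot\nu)(T^D_+f_+ - T^D_-f_-) + \tfrac{1}{2}FV(T^D_+f_+ + T^D_-f_-)\,\bigr].
\end{equation*}
Because $\Lambda$ is bijective on the relevant scale of Sobolev spaces, the right-hand side vanishes in $L^2(\Gamma;\mathbb{C}^2)$ iff the bracketed $H^{-1/2}$-expression vanishes. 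Multiplying on the left by $V^*$ and applying~\eqref{eq:VF_identity} gives
\begin{equation*}
  -ic(\sigma\cdot\nu)(T^D_+f_+ - T^D_-f_-) \;=\; \tfrac{1}{2}\bigl(\eta\sigma_0 + \tau\sigma_3 + i\lambda(\sigma\cdot\nu)\sigma_3\bigr)(T^D_+f_+ + T^D_-f_-),
\end{equation*}
which is precisely the boundary condition defining $\dom A_{\eta,\tau,\lambda}^{m,c}$.

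It remains to check that the domain constraint $\Gamma_0 f \in \dom\Theta$ is automatic rather than an extra assumption. For any $f \in \dom S^*$, Proposition~\ref{proposition_boundary_triple} guarantees $\Gamma_1 f \in L^2(\Gamma;\mathbb{C}^2)$. Substituting the vanishing bracket from the calculation above back into $\Gamma_1 f$, one gets
\begin{equation*}
  \Gamma_1 f \;=\; \Lambda\bigl[F - 2c^2 V(\sigma\cdot\nu)(\mathcal{C}_\zeta^{m,c} + \mathcal{C}_{\overline\zeta}^{m,c})(\sigma\cdot\nu)V^*\bigr]\Lambda\Gamma_0 f,
\end{equation*}
whenever the boundary condition holds. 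Since $\Gamma_1 f \in L^2(\Gamma;\mathbb{C}^2)$ and $\Lambda: H^{1/2}(\Gamma;\mathbb{C}^2) \to L^2(\Gamma;\mathbb{C}^2)$ is bijective, the expression inside the outer $\Lambda$ lies in $H^{1/2}(\Gamma;\mathbb{C}^2)$; this is precisely the membership $\Gamma_0 f \in \dom\Theta$, and the identity then reads $\Gamma_1 f = \Theta\Gamma_0 f$. Conversely, if $\Gamma_0 f \in \dom\Theta$ and $\Gamma_1 f = \Theta\Gamma_0 f$, the reverse substitution reproduces the boundary condition.

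The proof is essentially algebraic, so the main obstacle is purely notational: one must carefully track the non-commutativity of the matrix-valued multiplication operators $V$, $\sigma\cdot\nu$, $F$ on $\Gamma$, making sure that $V^*V = \sigma_0$ is used only where $V$ and $V^*$ are adjacent, and that the key algebraic identity $V^*FV = \eta\sigma_0 + \tau\sigma_3 + i\lambda(\sigma\cdot\nu)\sigma_3$ from~\eqref{eq:VF_identity} is invoked at the right moment to translate the abstract boundary condition into the interface condition appearing in~\eqref{def_A_eta_tau_intro}.
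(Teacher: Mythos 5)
Your proposal is correct and follows essentially the same route as the paper: both rewrite the transmission condition in \eqref{def_A_eta_tau_intro} through the explicit formulas for $\Gamma_0$, $\Gamma_1$ from Proposition~\ref{proposition_boundary_triple}, use $V^*V=\sigma_0$, the identity \eqref{eq:VF_identity}, and the bijectivity of $\Lambda$ to see that the Cauchy-type terms cancel and the condition $\Gamma_1 f=\Theta\Gamma_0 f$ emerges. Your explicit verification that the membership $\Gamma_0 f\in\dom\Theta$ comes for free from $\Gamma_1 f\in L^2(\Gamma;\mathbb{C}^2)$ is a point the paper leaves implicit, but it is the same argument.
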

\begin{proof}
  Recall that $f \in \dom A_{\eta, \tau, \lambda}^{m, c}$ if and only if $f_\pm \in H(\sigma, \Omega_\pm)$ and
  \begin{equation*}
    -i c (\sigma \cdot \nu) (T^D_+ f_+ - T^D_- f_-) = \frac{1}{2} (\eta \sigma_0 + \tau \sigma_3 + i \lambda (\sigma \cdot \nu) \sigma_3) (T^D_+ f_+ + T^D_- f_-).
  \end{equation*}
  Using \eqref{eq:VF_identity} and 
  \begin{equation*}
    \begin{split}
      \frac{1}{2} (T^D_+ f_+ + T^D_- f_-) &= V^* \Lambda \Gamma_0 f, \\
      -i c (\sigma \cdot \nu) (T^D_+ f_+ - T^D_- f_-) &= V^* \Lambda^{-1} \Gamma_1 f +  2 c^2 (\sigma \cdot \nu) (\mathcal{C}_\zeta^{m, c} + \mathcal{C}_{\overline{\zeta}}^{m, c}) (\sigma \cdot \nu) V^* \Lambda \Gamma_0 f,
    \end{split}
  \end{equation*}
  one finds that $f \in \dom A_{\eta, \tau, \lambda}^{m, c}$ if and only if $f \in \dom S^*$ and $\Gamma_1 f - \Theta \Gamma_0 f  =0$.
\end{proof}

\section{Proof of Theorem~\ref{theorem_intro} and further properties of $A_{\eta, \tau, \lambda}^{m, c}$} \label{section_proof}

The claims in Theorem~\ref{theorem_intro} are shown in various steps: Subsection~\ref{section_self_adjoint} is devoted to the self-adjointness of $A_{\eta, \tau, \lambda}^{m, c}$, the claim in Theorem~\ref{theorem_intro}~(i) about the Sobolev regularity of $\dom A_{\eta, \tau, \lambda}^{m, c}$ is then shown in Subsection~\ref{section_domain_regularity}. Afterwards, $\sigma_\textup{ess}(A_{\eta, \tau, \lambda}^{m, c})$ is computed in Subsection~\ref{section_essential_spectrum}. Finally, in Subsection~\ref{section_krein} we state some further properties of $A_{\eta, \tau, \lambda}^{m, c}$ like a useful Krein type resolvent formula and a Birman-Schwinger principle.

We will make use of the following two main ingredients: We use that $A_{\eta, \tau, \lambda}^{m, c}$ can be described with the boundary triple in 
Proposition~\ref{proposition_boundary_triple} as explained in Proposition~\ref{proposition_A_eta_tau_bt}, and the abstract results in Appendix~\ref{appendix_bt}. And we will use that the claimed properties can be obtained explicitly when the interaction support is the straight line $\Sigma$; cf. \cite{BHT23}. To distinguish the appearing objects on the straight line from the case of the more general interaction support $\Gamma$, we will denote them with a superindex $\Sigma$. Eventually, we will make often  use of the arc-length parametrization
\begin{equation} \label{arc_length_straight_line}
  x: \mathbb{R} \rightarrow \mathbb{R}^2, \quad x(t) = (t, 0),
\end{equation}
of the straight line $\Sigma$.

\subsection{Self-adjointness of $A_{\eta, \tau, \lambda}^{m, c}$} \label{section_self_adjoint}

Let $\{ L^2(\Gamma; \mathbb{C}^2), \Gamma_0, \Gamma_1 \}$ be the boundary triple for $S^*$ from Proposition~\ref{proposition_boundary_triple}  and $\Theta$ be defined by~\eqref{def_Theta1}. Recall that $$A_{\eta, \tau, \lambda}^{m, c} = S^* \upharpoonright \ker (\Gamma_1 - \Theta \Gamma_0).$$ 
Hence, by Theorem~\ref{theorem_bt} the operator $A_{\eta, \tau, \lambda}^{m, c}$ is self-adjoint in $L^2(\mathbb{R}^2; \mathbb{C}^2)$ if and only if $\Theta$ is self-adjoint in $L^2(\Gamma; \mathbb{C}^2)$. We show that the latter is the case. 

Denote by $\Lambda^\Sigma$ the map in~\eqref{def_Lambda} on $\Sigma$, by $\mathcal{C}_z^{m, c, \Sigma}$ the operator in~\eqref{def_C_z_formal} on the straight line $\Sigma$, and we write $\Theta^\Sigma$ for the parameter defined as in~\eqref{def_Theta1} on $\Sigma$. Since $\sigma \cdot \nu^\Sigma = -\sigma_2$ and $V^\Sigma = \sigma_0$ on $\Sigma$, one has the more explicit representation
\begin{equation} \label{Theta_Sigma}
  \Theta^\Sigma = \Lambda^\Sigma \big[ F - 2 c^2 \sigma_2 (\mathcal{C}_\zeta^{m, c, \Sigma} + \mathcal{C}_{\overline{\zeta}}^{m, c, \Sigma}) \sigma_2 \big] \Lambda^\Sigma
\end{equation}
and as in~\eqref{def_Theta1} this map  is defined on the maximal domain. This operator is self-adjoint in $L^2(\Sigma; \mathbb{C}^2)$ by Theorem~\ref{theorem_bt} and Proposition~\ref{proposition_A_eta_tau_bt} (applied for $\Gamma = \Sigma$), as $A_{\eta, \tau, \lambda}^{m, c, \Sigma}$ is self-adjoint in $L^2(\mathbb{R}^2; \mathbb{C}^2)$; this follows for $d \neq -4 c^2$ from \cite[Section~6]{BHT23} noting that $A_{\eta, \tau, \lambda}^{m,c, \Sigma} = c A_{\eta/c, \tau/c, \lambda/c}^{mc, 1, \Sigma}$ and $A_{\eta/c, \tau/c, \lambda/c}^{mc, 1, \Sigma} = A_{\eta/c, \tau/c, \lambda/c}$ is the operator studied in \cite{BHT23}, when $m$ is replaced by $mc$, and for $d = -4c^2$ from \cite{GL06}.
Next, recall that $\gamma$ is the arc-length parametrization of $\Gamma$ given by~\eqref{arc_length_Gamma_intro} and that $x$ is the arc-length parametrization of $\Sigma$ in~\eqref{arc_length_straight_line}. By definition, one has $\Lambda^\Sigma = U_x^{-1} U_\gamma \Lambda U_\gamma^{-1} U_x$. Then, by Proposition~\ref{proposition_C_z_Gamma} 
\begin{equation} \label{difference_Theta_Gamma_Sigma} 
  \begin{split}
    U_\gamma &\Theta U_\gamma^{-1} - U_x \Theta^\Sigma U_x^{-1} \\
    &= -2 c^2 U_\gamma \Lambda U_\gamma^{-1} \big[ U_\gamma V (\sigma \cdot \nu) \mathcal{C}_\zeta^{m, c} (\sigma \cdot \nu) V^* U_\gamma^{-1} - U_x \sigma_2 \mathcal{C}_\zeta^{m, c, \Sigma} \sigma_2 U_x^{-1} \\
    &\qquad \qquad \quad + U_\gamma V (\sigma \cdot \nu) \mathcal{C}_{\overline{\zeta}}^{m, c} (\sigma \cdot \nu) V^* U_\gamma^{-1} - U_x \sigma_2 \mathcal{C}_{\overline{\zeta}}^{m, c, \Sigma} \sigma_2 U_x^{-1} \big] U_\gamma \Lambda U_\gamma^{-1},
  \end{split}
\end{equation}
defined on its natural maximal domain,
can be extended to a compact operator in $L^2(\mathbb{R}; \mathbb{C}^2)$. Since the adjoint of $\mathcal{C}_\zeta^{m, c}$ in $L^2(\Gamma; \mathbb{C}^2)$ is given by $\mathcal{C}_{\overline{\zeta}}^{m, c}$, the right hand side of the latter equation is symmetric and can be extended to a self-adjoint operator. As $\Theta^\Sigma$ is self-adjoint in $L^2(\Sigma; \mathbb{C}^2)$ and $U_\gamma: L^2(\Gamma; \mathbb{C}^2) \rightarrow L^2(\mathbb{R}; \mathbb{C}^2)$ and $U_x: L^2(\Sigma; \mathbb{C}^2) \rightarrow L^2(\mathbb{R}; \mathbb{C}^2)$ are unitary, this implies that $\Theta$ is self-adjoint.

\subsection{Sobolev regularity in $\dom A_{\eta, \tau, \lambda}^{m, c}$} \label{section_domain_regularity}

In this subsection we show item~(i) of Theorem~\ref{theorem_intro}. For this we prove an auxiliary lemma in which we analyze $\dom \Theta$. Afterwards the Krein type resolvent formula for $A_{\eta, \tau, \lambda}^{m, c}$ from Theorem~\ref{theorem_bt} allows us to show the claims on $\dom A_{\eta, \tau, \lambda}^{m, c}$. Since $m c^2 \sigma_3$ is a bounded perturbation of the self-adjoint operator $A_{\eta, \tau, \lambda}^{m, c}$, it does not affect $\dom A_{\eta, \tau, \lambda}^{m, c}$, and hence, it is no restriction to assume in this subsection that $m \neq 0$ and to choose $\zeta = 0$ in Proposition~\ref{proposition_boundary_triple}.

\begin{lem} \label{lemma_dom_Theta}
  Assume that $m \neq 0$. Then, for the domain of the operator $\Theta$ defined in~\eqref{def_Theta1} with $\zeta = 0$ one has:
  \begin{itemize}
    \item[(a)] If $(\tfrac{d}{4} - c^2)^2 - \lambda^2 c^2 \neq 0$, then $\dom \Theta = H^1(\Gamma; \mathbb{C}^2)$.
    \item[(b)] If $(\tfrac{d}{4} - c^2)^2 - \lambda^2 c^2 = 0$, then $\dom \Theta \not\subset H^s(\Gamma; \mathbb{C}^2)$ for all $s > 0$.
  \end{itemize}
\end{lem}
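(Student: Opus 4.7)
\textbf{Proof plan for Lemma~\ref{lemma_dom_Theta}.}

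\textbf{Reduction to the straight line.} Since $m\neq 0$, we have $0\in\rho(A_0^{m,c})$, so the choice $\zeta=0$ is admissible and, because $\overline{\zeta}=\zeta$, the operator simplifies to
\begin{equation*}
  \Theta = \Lambda\bigl[F - 4c^2 V(\sigma\cdot\nu)\mathcal C_0^{m,c}(\sigma\cdot\nu)V^*\bigr]\Lambda,
\end{equation*}
with the analogous $\Theta^\Sigma$ on the straight line $\Sigma$, where $V=\sigma_0$ and $\sigma\cdot\nu=-\sigma_2$. The plan is to argue that $\dom\Theta$ and $\dom\Theta^\Sigma$ have the same Sobolev regularity, by conjugating everything to $L^2(\mathbb R;\mathbb C^2)$ via $U_\gamma$ and $U_x$. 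Noting that $U_\gamma\Lambda U_\gamma^{-1}=U_x\Lambda^\Sigma U_x^{-1}=:\tilde\Lambda$ (Fourier multiplication by $(1+p^2)^{1/4}$), Proposition~\ref{proposition_C_z_Gamma} yields that the transported bracket difference extends to a compact operator $H^{-1/2}(\mathbb R;\mathbb C^2)\to H^{1/2}(\mathbb R;\mathbb C^2)$; sandwiching between two copies of $\tilde\Lambda$ (which is an isomorphism $L^2\to H^{-1/2}$ and $H^{1/2}\to L^2$) shows that $U_\gamma\Theta U_\gamma^{-1}-U_x\Theta^\Sigma U_x^{-1}$ extends to a bounded operator in $L^2(\mathbb R;\mathbb C^2)$. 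Adding a bounded operator does not alter the domain, so $\dom(U_\gamma\Theta U_\gamma^{-1})=\dom(U_x\Theta^\Sigma U_x^{-1})$. Since $U_\gamma$ and $U_x$ are by definition isometric isomorphisms between the Sobolev spaces on $\Gamma$, $\Sigma$ and $\mathbb R$, the Sobolev-regularity question on $\Gamma$ is equivalent to the one on $\Sigma$.

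\textbf{Fourier analysis on $\Sigma$.} Because $\Theta^\Sigma$ is translation-invariant, it is a Fourier multiplier. Using the explicit form of $G_0^{m,c}$ in terms of $K_0,K_1$, together with the standard identities $\widehat{K_0(|m|c|\cdot|)}(p)=\pi(m^2c^2+p^2)^{-1/2}$ and $\widehat{\mathrm{sgn}(\cdot)K_1(|m|c|\cdot|)}(p) = -i\pi p/(|m|c\sqrt{m^2c^2+p^2})$, a direct computation shows that $\sigma_2\mathcal C_0^{m,c,\Sigma}\sigma_2$ has symbol $-(p\sigma_1+mc\sigma_3)/(2c\sqrt{m^2c^2+p^2})$. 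Hence $\Theta^\Sigma$ has the symbol
\begin{equation*}
  \theta^\Sigma(p)=\sqrt{1+p^2}\,M(p),\qquad M(p):=F+\frac{2c(p\sigma_1+mc\sigma_3)}{\sqrt{m^2c^2+p^2}},
\end{equation*}
and its (maximal) domain consists of those $\varphi\in L^2(\Sigma;\mathbb C^2)$ with $\theta^\Sigma(\cdot)\hat\varphi\in L^2(\mathbb R;\mathbb C^2)$.

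\textbf{Case analysis.} As $p\to\pm\infty$, $M(p)\to F\pm 2c\sigma_1$, matrices with determinants $d-4c^2\mp 4c\lambda$ whose product equals $16\bigl[(d/4-c^2)^2-\lambda^2c^2\bigr]$. In case~(a), both limits are invertible, so $M(p)$ is uniformly invertible with uniformly bounded inverse for large $|p|$; on bounded $|p|$ the factor $(1+p^2)$ is bounded and causes no issue. Therefore $\int(1+p^2)|M(p)\hat\varphi|^2\,dp<\infty$ is equivalent to $\int(1+p^2)|\hat\varphi|^2\,dp<\infty$, giving $\dom\Theta^\Sigma=H^1(\Sigma;\mathbb C^2)$. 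In case~(b), at least one of $F\pm 2c\sigma_1$, say $F+2c\sigma_1$, is singular; pick a unit nullvector $e$. Taylor-expanding $M(p)$ at $p\to+\infty$ gives $M(p)e=O(1/|p|)$. Choosing $\hat\varphi(p)=g(p)e$ supported in $\{p>e\}$ with $g(p)=p^{-1/2}(\log p)^{-1}$, one checks that $\int(1+p^2)|M(p)\hat\varphi|^2\,dp\lesssim\int_e^\infty(p\log p)^{-2}\,dp<\infty$, so $\varphi\in\dom\Theta^\Sigma$, while $\int p^{2s}|g|^2\,dp=\int_e^\infty p^{2s-1}(\log p)^{-2}\,dp=\infty$ for every $s>0$, whence $\varphi\notin H^s(\Sigma;\mathbb C^2)$ for any $s>0$. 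Transporting back via $U_x^{-1}U_\gamma$ yields the same statement on $\Gamma$.

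\textbf{Main obstacle.} The cleanest part is the straight-line Fourier analysis; the reduction is the delicate point, because Proposition~\ref{proposition_C_z_Gamma} only provides a compact gain of one derivative for the bare Cauchy operator, and one must carefully check that after conjugation by $\tilde\Lambda$ the residual is bounded on $L^2$ and that the unitaries $U_\gamma,U_x$ preserve every $H^s$-scale so that the conclusions about $\dom\Theta^\Sigma$ can be pulled back to $\dom\Theta$ without any loss of regularity.
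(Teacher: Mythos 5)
Your proposal is correct and takes essentially the same route as the paper: reduce to the straight line by using Proposition~\ref{proposition_C_z_Gamma} to see that the conjugated difference $U_\gamma\Theta U_\gamma^{-1}-U_x\Theta^\Sigma U_x^{-1}$ extends to a bounded (in fact compact) operator on $L^2(\mathbb{R};\mathbb{C}^2)$, so the maximal domains agree after transplantation by $U_\gamma,U_x$, and then analyze the Fourier multiplier $\theta^\Sigma(p)=\sqrt{1+p^2}\,M(p)$ on $\Sigma$. The only (harmless) deviations are that the paper works with the eigenvalues $\mu_\pm(p)$ of the symbol while you use the limit matrices $F\pm 2c\sigma_1$ and an explicit test function $\hat\varphi(p)=p^{-1/2}(\log p)^{-1}e$ in the critical case, and that your displayed bound should read $O\bigl(p^{-1}(\log p)^{-2}\bigr)$ rather than $O\bigl((p\log p)^{-2}\bigr)$ --- still integrable, so the conclusion stands.
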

\begin{proof}
  The proof of this lemma is split into two steps: We analyze $\dom \Theta^\Sigma$ first and with the help of Proposition~\ref{proposition_C_z_Gamma} we transfer
  these results to the case of more general curves $\Gamma$.

  \textit{Step~1}.
  We use the Fourier transform $\mathcal{F}$ in $L^2(\mathbb{R}; \mathbb{C}^2) = U_x L^2(\Sigma; \mathbb{C}^2)$ and note first that $\mathcal{F} U_x \mathcal{C}_z^{m, c, \Sigma} U_x^{-1} \mathcal{F}^{-1}$ is the maximal multiplication operator associated with the function
  \begin{equation*}
    c_z(p) = \frac{1}{2 \sqrt{p^2 c^2 + (mc^2)^2 - z^2}} \begin{pmatrix} \tfrac{z}{c} + m c & p \\ p & \tfrac{z}{c} - m c \end{pmatrix};
  \end{equation*}
  to see this, one can apply \cite[Lemma~2.1]{BHT22} and note with~\eqref{relation_G_z_c} that $\mathcal{C}_z^{m, c, \Sigma} = \tfrac{1}{c} \mathcal{C}_{z/c}^{m c, 1, \Sigma}$ and $\mathcal{C}_z = \mathcal{C}_{z}^{mc, 1, \Sigma}$ in the notation of \cite{BHT22}. Recall that we chose $\zeta = 0$ in the definition of $\Theta^\Sigma$. 
  Hence, with~\eqref{Theta_Sigma} one sees that the operator $\mathcal{F} U_x \Theta^\Sigma U_x^{-1} \mathcal{F}^{-1}$ is the maximal multiplication operator associated with the function
  \begin{equation*}
    \theta(p) = \sqrt{p^2 + 1} \left[ F - \sigma_2 \frac{2 c^2}{\sqrt{p^2 c^2 + (mc^2)^2}} \big( m c \sigma_3 + p \sigma_1 \big) \sigma_2 \right] = \theta_1(p) + \theta_2(p)
  \end{equation*}
  with 
  \begin{equation*}
    \begin{split}
      \theta_1(p) &= \sqrt{p^2 + 1} \begin{pmatrix} \eta + \tau & \lambda + \frac{2 p c}{\sqrt{p^2 + (m c)^2}} \\ \lambda + \frac{2 p c}{\sqrt{p^2 + (m c)^2}} & \eta - \tau \end{pmatrix}, \\ 
      \theta_2(p) &= \frac{2 \sqrt{p^2 + 1}}{\sqrt{p^2 + (mc)^2}} m c^2 \sigma_3.
    \end{split}
  \end{equation*}
  Since $\theta_2$ is a bounded function, its associated multiplication operator is bounded and everywhere defined in $L^2(\mathbb{R}; \mathbb{C}^2)$. The eigenvalues of $\theta_1(p)$ are given by 
  \begin{equation*}
    \begin{split}
      \mu_{\pm}(p) &= \sqrt{p^2 + 1} \left( \eta \pm \sqrt{\eta^2 - d + \frac{4 p^2 c^2}{p^2 + (m c)^2} + \frac{4 p c \lambda}{\sqrt{p^2 + (m c)^2}}} \right) \\
      &= \sqrt{p^2 + 1} \left( \eta \pm \sqrt{\eta^2 - d + 4 c^2 + 4 \lambda c \, \sign(p) + f(p)} \right),
    \end{split}
  \end{equation*}
  where $d$ is given by~\eqref{eq:d_def} and  $f$ is a continuous function  such  that $f(p)=\mathcal{O} \big( \frac{1}{p^2} \big)$ for $p \rightarrow \pm \infty$.
  Hence, if $(\tfrac{d}{4} - c^2)^2 - \lambda^2 c^2 \neq 0$, then $\mu_{\pm}(p) \sim \sqrt{p^2 + 1}$ and thus, $\dom \Theta^\Sigma = H^1(\Sigma; \mathbb{C}^2)$. On the other hand, if $(\tfrac{d}{4} - c^2)^2 - \lambda^2 c^2 = 0$, then at least one of the functions $\mu_{\pm}$ is bounded on $\mathbb{R}_+$ or on $\mathbb{R}_-$ (or bounded everywhere on $\mathbb{R}$, if $0 = \lambda = d - 4c^2$), implying that $\dom \Theta^\Sigma \not\subset H^s(\Sigma; \mathbb{C}^2)$ for all $s > 0$. Thus, the claim of the lemma is true for the straight line $\Gamma = \Sigma$.
  
  \textit{Step~2}.
  Recall that $U_\gamma \Theta U_\gamma^{-1} - U_x \Theta^\Sigma U_x^{-1}$ can be extended to a compact operator in $L^2(\mathbb{R}; \mathbb{C})$, see Proposition~\ref{proposition_C_z_Gamma} and \eqref{difference_Theta_Gamma_Sigma}. Hence, the maximal domain of $\Theta$ can be described in the form $\dom \Theta =  \dom (\Theta^\Sigma U_x^{-1} U_\gamma)$
  and now the claim follows from our observation in \textit{Step~1}.
\end{proof}

\begin{proof}[Proof of Theorem~\ref{theorem_intro}~(i)]
  As explained above we can assume, without loss of generality, that $m \neq 0$, and choose $\zeta = 0$ in~\eqref{def_Theta1}. To prove item~(a) we use $\dom A_{\eta, \tau, \lambda}^{m, c} = \ran (A_{\eta, \tau, \lambda}^{m, c} - z)^{-1}$ for $z \in \mathbb{C} \setminus \mathbb{R}$ and the resolvent formula
  \begin{equation} \label{krein}
    (A_{\eta, \tau, \lambda}^{m, c} - z)^{-1} = (A_0^{m, c} - z)^{-1} + \gamma(z) \big( \Theta - M(z) \big)^{-1} \gamma(\overline{z})^*
  \end{equation}
  from Theorem~\ref{theorem_bt}~(iv). If $(\tfrac{d}{4} - c^2)^2 - \lambda^2 c^2 \neq 0$, then by Lemma~\ref{lemma_dom_Theta} one has 
  $$\dom (\Theta - M(z)) = \dom \Theta = H^1(\Gamma; \mathbb{C}^2).$$ 
  Hence, $\ran(A_0^{m, c}-z)^{-1}=\dom A_0^{m, c} = H^1(\mathbb{R}^2; \mathbb{C}^2)$, the explicit form $\gamma(z) = -4 c^2 \Phi_z^{m, c} (\sigma \cdot \nu) \mathcal{C}_z^{m, c} (\sigma \cdot \nu) V^* \Lambda$ from Proposition~\ref{proposition_boundary_triple}, and the mapping properties in~\eqref{def_Lambda}, \eqref{mapping_properties_Phi_z}, and~\eqref{def_C_z_formal} imply  $\dom A_{\eta, \tau, \lambda}^{m, c} \subset H^1(\mathbb{R}^2 \setminus \Gamma; \mathbb{C}^2)$, that is, the assertion in Theorem~\ref{theorem_intro}~(i)~(a) holds.
  
  Next, we show assertion~(b), i.e. that $\dom A_{\eta, \tau, \lambda}^{m, c} \not\subset H^s(\mathbb{R}^2 \setminus \Gamma; \mathbb{C}^2)$ for all $s>0$, if  $(\tfrac{d}{4} - c^2)^2 - \lambda^2 c^2 = 0$. Note that, due to the surjectivity of $(\Gamma_0, \Gamma_1)$, for all $\varphi \in \dom \Theta$ there exists $f \in \dom S^*$ such that $\Gamma_0 f = \varphi$ and $\Gamma_1 f = \Theta \varphi$. Hence, for each $\varphi \in \dom \Theta$ there exists $f \in \dom A_{\eta, \tau, \lambda}^{m, c}$ such that $\Gamma_0 f = \varphi$. If we assume now $\dom A_{\eta, \tau, \lambda}^{m, c} \subset H^s(\mathbb{R}^2 \setminus \Gamma; \mathbb{C}^2)$ for an $s > 0$, then by the definition of $\Gamma_0$ we would conclude that any $\varphi \in \dom \Theta$ also belongs to $H^s(\Gamma; \mathbb{C}^2)$. However, this contradicts Lemma~\ref{lemma_dom_Theta}, showing that also Theorem~\ref{theorem_intro}~(i)~(b) is true. 
\end{proof}

\subsection{Essential spectrum of $A_{\eta, \tau, \lambda}^{m, c}$} \label{section_essential_spectrum}

First, we claim that for all combinations $\eta, \tau, \lambda \in \mathbb{R}$ we have 
\begin{equation}\label{showthisplease}
(-\infty, -|m|c^2] \cup [|m|c^2, +\infty) \subset \sigma_\textup{ess}(A_{\eta, \tau, \lambda}^{m, c}),
\end{equation}
which can be proved via a suitable singular sequence. To show \eqref{showthisplease}, we can assume that $\Gamma \subset [0, +\infty) \times \mathbb{R}$. In fact, if
this is not the case, then via a suitable rotation and translation one can transform $A_{\eta, \tau, \lambda}^{m, c}$ to a unitary equivalent Dirac operator, which may have different Dirac matrices $\alpha_1, \alpha_2, \beta \in \mathbb{C}^{2 \times 2}$ instead of $\sigma_1, \sigma_2, \sigma_3$, see, e.g., \cite[p. 150]{J05} or \cite[Proposition~4]{R22}, with a singular potential that is supported on a curve 
contained in $[0, +\infty) \times \mathbb{R}$, so this assumption is not a restriction. 

Let  now $z \in (-\infty, -|m| c^2) \cup (|m| c^2, +\infty)$ be fixed and consider for $n \in \mathbb{N}$ the function
\begin{equation*} 
  f_n(x_1, x_2) := \frac{1}{n} \chi\left(\frac{1}{n} |x-y_n| \right) e^{i \sqrt{z^2 - m^2 c^4} x_1/c} \big(\sqrt{z^2 - m^2 c^4} \sigma_1 + m c^2 \sigma_3 + z \sigma_0\big) \xi,
\end{equation*}
where $\chi:\mathbb{R}\to [0, 1]$ is a $C^\infty$-function such that $\chi(t)=1$ for $|t|\leq \frac{1}{2}$ and $\chi(t)=0$ for $|t|\geq 1$, the vector $\xi \in \mathbb{C}^2$ is chosen such that $(\sqrt{z^2 - m^2 c^4} \sigma_1 + m c^2 \sigma_3 + z \sigma_0) \xi \neq 0$, 
and we set $y_n := (0, -n^2)$, $n \in \mathbb{N}$. 
Then $f_n \in \dom A_{\eta, \tau, \lambda}^{m, c}$ and one can show as in \cite[Theorem~5.7~(i)]{BH19} that $\|(A_{\eta, \tau, \lambda}^{m, c}-z)f_n\|/\|f_n\| \rightarrow 0$ for $n\to \infty$. Since $z \in (-\infty, -|m| c^2) \cup (|m| c^2, +\infty)$ was arbitrary, the inclusion \eqref{showthisplease} follows.
  
For $m=0$ the inclusion \eqref{showthisplease} implies $\sigma_\textup{ess}(A_{\eta, \tau, \lambda}^{0, c}) = \mathbb{R}$, so there is nothing left to show. Hence we can assume $m \neq 0$ in the following. In order to analyze $\sigma_\textup{ess}(A_{\eta, \tau, \lambda}^{m, c}) \cap (-|m| c^2, |m| c^2)$, we use that by Proposition~\ref{proposition_A_eta_tau_bt} and Theorem~\ref{theorem_bt}~(iii)
\begin{equation}\label{eins}
  z \in \sigma_\textup{ess}(A_{\eta, \tau, \lambda}^{m, c}) \cap (-|m| c^2, |m| c^2) \quad \text{if and only if} \quad 0 \in \sigma_\textup{ess}(\Theta - M(z)),
\end{equation}
where $M$ is the Weyl function of the boundary triple in Proposition~\ref{proposition_boundary_triple}. 
Using the definition of $\Theta$ in~\eqref{def_Theta1} 
one verifies for $z\in (-|m| c^2, |m| c^2)$
\begin{equation*}
  \Theta - M(z) = \Lambda [F - 4 c^2 V (\sigma \cdot \nu) \mathcal{C}_z^{m, c} (\sigma \cdot \nu) V^*] \Lambda
\end{equation*}
and in the same way for the straight line
\begin{equation*}
  \Theta^\Sigma - M^\Sigma(z) = \Lambda^\Sigma [F - 4 c^2 \sigma_2 \mathcal{C}_z^{m, c,\Sigma} \sigma_2)]\Lambda^\Sigma.
\end{equation*}
Since $\Lambda^\Sigma = U_x^{-1} U_\gamma \Lambda U_\gamma^{-1} U_x$ we obtain 
\begin{equation*} 
  \begin{split}
    U_\gamma &(\Theta - M(z) ) U_\gamma^{-1} - U_x (\Theta^\Sigma - M^\Sigma(z)) U_x^{-1} \\
    &= -4 c^2 U_\gamma \Lambda U_\gamma^{-1} \big[ U_\gamma V (\sigma \cdot \nu) \mathcal{C}_z^{m, c} (\sigma \cdot \nu) V^* U_\gamma^{-1} - U_x \sigma_2 \mathcal{C}_z^{m, c, \Sigma} \sigma_2 U_x^{-1} \big] U_\gamma \Lambda U_\gamma^{-1}
  \end{split}
\end{equation*}
and it follows from Proposition~\ref{proposition_C_z_Gamma} that this operator 
can be extended to a compact operator in $L^2(\mathbb{R}; \mathbb{C}^2)$. Since $U_\gamma$ and $U_x$ are unitary, we conclude that 
$$\sigma_\textup{ess}(\Theta - M(z) ) = \sigma_\textup{ess}(\Theta^\Sigma - M^\Sigma(z) ),$$ 
and hence, by \eqref{eins},
\begin{equation*}
  z \in \sigma_\textup{ess}(A_{\eta, \tau, \lambda}^{m, c}) \cap (-|m| c^2, |m| c^2) \quad \text{if and only if} \quad 0 \in \sigma_\textup{ess}(\Theta^\Sigma - M^\Sigma(z)).
\end{equation*}
Applying Theorem~\ref{theorem_bt}~(iii) and Proposition~\ref{proposition_A_eta_tau_bt} to the Dirac operator $A_{\eta, \tau, \lambda}^{m, c, \Sigma}$ with
the singular interaction supported on the straight line $\Sigma$, we find 
$\sigma_\textup{ess}(A_{\eta, \tau, \lambda}^{m, c}) = \sigma_\textup{ess}(A_{\eta, \tau, \lambda}^{m, c, \Sigma})$, which finally yields the statement in Theorem~\ref{theorem_intro}~(ii);
this follows in the case $\eta^2 - \tau^2 - \lambda^2 \neq -4 c^2$ from \cite[Theorem~6.2]{BHT23} noting that $A_{\eta, \tau, \lambda}^{m,c, \Sigma} = c A_{\eta/c, \tau/c, \lambda/c}^{mc, 1, \Sigma}$ and $A_{\eta/c, \tau/c, \lambda/c}^{mc, 1, \Sigma} = A_{\eta/c, \tau/c, \lambda/c}$ is the operator studied in \cite{BHT23}, when $m$ is replaced by $mc$; to obtain the result, it is convenient to replace the term $k$ in \cite[Theorem~6.2]{BHT23} by $k c$. The claim is also true for $\eta^2 - \tau^2 - \lambda^2 = -4 c^2$, which is excluded in \cite{BHT23}, see \cite{GL06}.

Finally, the claim that $\sigma(A_{\eta, \tau, \lambda}^{m, c}) = \sigma_\textup{ess}(A_{\eta, \tau, \lambda}^{m, c})$, if $\Gamma=\Sigma$ is the straight line, is a part of the results in \cite{BHT23, GL06}.

\subsection{Krein type resolvent formula and further spectral properties of $A_{\eta, \tau, \lambda}^{m, c}$} \label{section_krein}

In the next proposition we formulate a Birman-Schwinger principle to characterize discrete eigenvalues of $A_{\eta, \tau, \lambda}^{m, c}$. 
The proof of the following result is the same as the proof of \cite[Proposition~6.5~(ii)]{BHSS22} and will be omitted here.

\begin{prop} \label{proposition_Krein_formulaXXX}
  Let $\eta, \tau, \lambda \in \mathbb{R}$ and $z \notin \sigma_\textup{ess}(A_{\eta, \tau, \lambda}^{m, c})$. Then, 
  $$z \in \sigma_\textup{disc}(A_{\eta, \tau, \lambda}^{m, c})\quad\text{if and only if}\quad -1 \in \sigma_\textup{p}((\eta \sigma_0 + \tau \sigma_3 + i \lambda (\sigma \cdot \nu) \sigma_3) \mathcal{C}_z^{m,c}),$$
  where $(\eta \sigma_0 + \tau \sigma_3 + i \lambda (\sigma \cdot \nu) \sigma_3) \mathcal{C}_z^{m,c}$ is viewed as an operator in $H^{-1/2}(\Gamma; \mathbb{C}^2)$.
\end{prop}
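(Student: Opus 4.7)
My plan is to translate the eigenvalue problem for $A_{\eta,\tau,\lambda}^{m,c}$ into a Birman--Schwinger condition on the boundary $\Gamma$ via the boundary triple of Proposition~\ref{proposition_boundary_triple}, and then use the algebraic identity \eqref{C_z_inv} to recast it in the claimed form.

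First, I would invoke the point-spectrum characterization for boundary-triple extensions (item~(iii) of Theorem~\ref{theorem_bt} in the appendix): since $A_{\eta,\tau,\lambda}^{m,c}=S^*\upharpoonright\ker(\Gamma_1-\Theta\Gamma_0)$ by Proposition~\ref{proposition_A_eta_tau_bt} and $z\notin\sigma_\textup{ess}(A_{\eta,\tau,\lambda}^{m,c})$ forces in particular $z\in\rho(A_0^{m,c})$ (because Theorem~\ref{theorem_intro}~(ii) shows $\sigma(A_0^{m,c})\subseteq\sigma_\textup{ess}(A_{\eta,\tau,\lambda}^{m,c})$), one has $z\in\sigma_\textup{disc}(A_{\eta,\tau,\lambda}^{m,c})$ if and only if $\ker(\Theta-M(z))\neq\{0\}$. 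A direct cancellation of the symmetrizing $\mathcal{C}_\zeta^{m,c}+\mathcal{C}_{\overline\zeta}^{m,c}$ terms in the explicit formulas of Proposition~\ref{proposition_boundary_triple} gives
\begin{equation*}
\Theta-M(z)=\Lambda\bigl[F-4c^2V(\sigma\cdot\nu)\mathcal{C}_z^{m,c}(\sigma\cdot\nu)V^*\bigr]\Lambda.
\end{equation*}

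Next I would exploit that $\Lambda:L^2(\Gamma;\mathbb{C}^2)\to H^{-1/2}(\Gamma;\mathbb{C}^2)$ is bijective and that $V$, $V^*$, $\sigma\cdot\nu$ are bijective on $H^{-1/2}(\Gamma;\mathbb{C}^2)$: the substitution $\chi:=V^*\Lambda\varphi$ and subsequent left-multiplication by $V^*$ (using $V^*FV=\eta\sigma_0+\tau\sigma_3+i\lambda(\sigma\cdot\nu)\sigma_3$ from \eqref{eq:VF_identity}) reduce $\ker(\Theta-M(z))\neq\{0\}$ to the existence of a nonzero $\chi\in H^{-1/2}(\Gamma;\mathbb{C}^2)$ with
\begin{equation*}
\bigl(\eta\sigma_0+\tau\sigma_3+i\lambda(\sigma\cdot\nu)\sigma_3\bigr)\chi=4c^2(\sigma\cdot\nu)\mathcal{C}_z^{m,c}(\sigma\cdot\nu)\chi.
\end{equation*}
The identity \eqref{C_z_inv} rewrites as $4c^2(\sigma\cdot\nu)\mathcal{C}_z^{m,c}(\sigma\cdot\nu)=-(\mathcal{C}_z^{m,c})^{-1}$, which both certifies that $\mathcal{C}_z^{m,c}$ is a bijection on $H^{-1/2}(\Gamma;\mathbb{C}^2)$ and enables the further substitution $\psi:=(\mathcal{C}_z^{m,c})^{-1}\chi$, turning the displayed equation into $\bigl(\eta\sigma_0+\tau\sigma_3+i\lambda(\sigma\cdot\nu)\sigma_3\bigr)\mathcal{C}_z^{m,c}\psi=-\psi$ in $H^{-1/2}(\Gamma;\mathbb{C}^2)$, i.e., exactly the condition $-1\in\sigma_\textup{p}\bigl((\eta\sigma_0+\tau\sigma_3+i\lambda(\sigma\cdot\nu)\sigma_3)\mathcal{C}_z^{m,c}\bigr)$.

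The only thing to check is that each of the substitutions $\varphi\leftrightarrow\chi\leftrightarrow\psi$ is a genuine bijection on the correct Sobolev class over $\Gamma$, so that the chain of implications runs in both directions without losing nonzero eigenvectors; this is ensured by the mapping properties of $\mathcal{C}_z^{m,c}$ recorded in Section~\ref{section_integral_operators} together with \eqref{C_z_inv}. The manipulation is essentially identical to the one used in the proof of \cite[Proposition~6.5~(ii)]{BHSS22} for bounded interaction supports, and nothing about the unbounded geometry changes the argument once the boundary triple of Proposition~\ref{proposition_boundary_triple} and the identities of Section~\ref{section_preliminaries} are in place.
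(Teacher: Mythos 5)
Your argument is correct and is essentially the proof the paper intends: the paper omits it with a reference to \cite[Proposition~6.5~(ii)]{BHSS22}, which is exactly this boundary-triple reduction, using that $z\notin\sigma_\textup{ess}(A_{\eta,\tau,\lambda}^{m,c})$ forces $z\in\rho(A_0^{m,c})$, the cancellation $\Theta-M(z)=\Lambda\bigl[F-4c^2V(\sigma\cdot\nu)\mathcal{C}_z^{m,c}(\sigma\cdot\nu)V^*\bigr]\Lambda$ (which the paper itself displays in Section~\ref{section_essential_spectrum}), the identity \eqref{eq:VF_identity}, and \eqref{C_z_inv} to invert $\mathcal{C}_z^{m,c}$ on $H^{-1/2}(\Gamma;\mathbb{C}^2)$. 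Two small points to tidy: the point-spectrum characterization is item~(ii), not (iii), of Theorem~\ref{theorem_bt}, and in the reverse direction one should note explicitly that $\varphi=\Lambda^{-1}V\mathcal{C}_z^{m,c}\psi\in\dom\Theta$ because $\mathcal{C}_z^{m,c}-\tfrac12(\mathcal{C}_\zeta^{m,c}+\mathcal{C}_{\overline{\zeta}}^{m,c})$ maps $H^{-1/2}(\Gamma;\mathbb{C}^2)$ boundedly into $H^{1/2}(\Gamma;\mathbb{C}^2)$ -- the detail you flag but do not carry out.
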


Next, we state a Krein type resolvent formula for $A_{\eta, \tau, \lambda}^{m, c}$. Recall that the operators $A_0^{m, c}$ and $\Phi_z^{m, c}$ are given by~\eqref{def_free_op} and~\eqref{def_Phi_z}, respectively. 

\begin{prop} \label{proposition_Krein_formula}
  Let $\eta, \tau, \lambda \in \mathbb{R}$. Then, $z \in \rho(A_{\eta, \tau, \lambda}^{m, c})$ if and only if the operator $\sigma_0 + (\eta \sigma_0 + \tau \sigma_3 + i \lambda (\sigma \cdot \nu) \sigma_3) \mathcal{C}_z^{m,c}$ admits a bounded inverse from $H^{1/2}(\Gamma; \mathbb{C}^2)$ to $H^{-1/2}(\Gamma; \mathbb{C}^2)$ and in this case the resolvent formula
    \begin{equation*}
      \begin{split}
        (&A_{\eta, \tau, \lambda}^{m, c} - z)^{-1} = (A_0^{m, c} - z)^{-1} \\
        &~~- \Phi_z^{m, c} \big(\sigma_0 + (\eta \sigma_0 + \tau \sigma_3 + i \lambda (\sigma \cdot \nu) \sigma_3) \mathcal{C}_z^{m, c} \big)^{-1} (\eta \sigma_0 + \tau \sigma_3 + i \lambda (\sigma \cdot \nu) \sigma_3) (\Phi_{\overline{z}}^{m, c})^*
      \end{split}
    \end{equation*}
    holds.
\end{prop}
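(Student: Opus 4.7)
The plan is to derive the resolvent formula from the abstract Krein formula associated with the boundary triple from Proposition~\ref{proposition_boundary_triple}, rather than verifying it by a direct Plemelj--Sokhotskii computation. Setting $P := \eta\sigma_0 + \tau\sigma_3 + i\lambda(\sigma\cdot\nu)\sigma_3$, so that $F = VPV^*$ by \eqref{eq:VF_identity}, and applying Theorem~\ref{theorem_bt} to the boundary parameter $\Theta$ from Proposition~\ref{proposition_A_eta_tau_bt}, one has for $z \in \rho(A_{\eta,\tau,\lambda}^{m,c}) \cap \rho(B)$
\begin{equation*}
(A_{\eta,\tau,\lambda}^{m,c} - z)^{-1} = (B - z)^{-1} + \gamma(z)(\Theta - M(z))^{-1}\gamma(\overline z)^*,
\end{equation*}
together with the equivalence that $z \in \rho(A_{\eta,\tau,\lambda}^{m,c})$ if and only if $0 \in \rho(\Theta - M(z))$.

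The key algebraic ingredient is the identity $(\mathcal{C}_z^{m, c})^{-1} = -4c^2(\sigma\cdot\nu)\mathcal{C}_z^{m, c}(\sigma\cdot\nu)$, obtained by rewriting \eqref{C_z_inv}. Combined with $F = VPV^*$, the expression for $M(z)$ in Proposition~\ref{proposition_boundary_triple} gives
\begin{equation*}
\Theta - M(z) = \Lambda V\bigl[P + (\mathcal{C}_z^{m, c})^{-1}\bigr] V^* \Lambda = \Lambda V(I + P\mathcal{C}_z^{m, c})(\mathcal{C}_z^{m, c})^{-1}V^*\Lambda.
\end{equation*}
Because $\Lambda$, $V$, and $\mathcal{C}_z^{m, c}$ are bounded bijections between the appropriate Sobolev spaces introduced in Section~\ref{section_preliminaries}, the invertibility of $\Theta - M(z)$ in $L^2(\Gamma;\mathbb{C}^2)$ is equivalent to $I + P\mathcal{C}_z^{m, c}$ admitting a bounded inverse from $H^{1/2}(\Gamma;\mathbb{C}^2)$ to $H^{-1/2}(\Gamma;\mathbb{C}^2)$, which yields the first part of the statement.

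For the explicit resolvent identity I would substitute the formulas for $\gamma(z)$ and $\gamma(\overline z)^* = -4c^2 \Lambda V(\sigma\cdot\nu)\mathcal{C}_z^{m, c}(\sigma\cdot\nu)(\Phi_{\overline z}^{m, c})^*$ into the Krein formula and apply the identity $(\mathcal{C}_z^{m, c})^{-1} = -4c^2(\sigma\cdot\nu)\mathcal{C}_z^{m, c}(\sigma\cdot\nu)$ to each factor flanking $(\Theta - M(z))^{-1}$. After the $V^*V = I$ and $\Lambda\Lambda^{-1} = I$ cancellations one obtains
\begin{equation*}
\gamma(z)(\Theta - M(z))^{-1}\gamma(\overline z)^* = \Phi_z^{m, c}(I + P\mathcal{C}_z^{m, c})^{-1}(\mathcal{C}_z^{m, c})^{-1}(\Phi_{\overline z}^{m, c})^*.
\end{equation*}
Rewriting $(B-z)^{-1}$ via Lemma~\ref{lemma_B} as $(A_0^{m, c}-z)^{-1} - \Phi_z^{m, c}(\mathcal{C}_z^{m, c})^{-1}(\Phi_{\overline z}^{m, c})^*$ (again by the same identity) and using the telescoping relation $(I + P\mathcal{C}_z^{m, c})^{-1} - I = -(I + P\mathcal{C}_z^{m, c})^{-1} P\mathcal{C}_z^{m, c}$ to combine the two terms makes the unwanted $(\mathcal{C}_z^{m, c})^{-1}$ collapse, leaving exactly the claimed expression $-\Phi_z^{m, c}(I + P\mathcal{C}_z^{m, c})^{-1} P(\Phi_{\overline z}^{m, c})^*$.

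The main obstacle I anticipate is the careful bookkeeping of Sobolev regularity when translating the abstract statement about $\Theta - M(z)$ acting in $L^2(\Gamma;\mathbb{C}^2)$ into a concrete statement about $I + P\mathcal{C}_z^{m, c}$ on the Sobolev scale; this is needed both for the equivalence and in order to justify that each factor appearing in the final formula composes meaningfully. All other steps are formal algebraic manipulations that become routine once the mapping properties of $\Lambda$, $V$, $\mathcal{C}_z^{m, c}$, $\Phi_z^{m, c}$, and $(\Phi_{\overline z}^{m, c})^*$ from Section~\ref{section_preliminaries} are fixed.
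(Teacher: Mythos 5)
Your proposal is correct, but it follows a genuinely different route from the paper. The paper proves the direction starting from $z \in \rho(A_{\eta,\tau,\lambda}^{m,c})$ by referring to the analogous computation in \cite[Proposition~6.5]{BHSS22} (based on Lemma~\ref{lemma_B}), and proves the converse by hand: injectivity of $A_{\eta,\tau,\lambda}^{m,c}-z$ via the Birman--Schwinger principle in Proposition~\ref{proposition_Krein_formulaXXX}, followed by a direct verification that the candidate operator $R_z$ maps into $\dom A_{\eta,\tau,\lambda}^{m,c}$ (mapping properties of $\Phi_z^{m,c}$ and $(\Phi_{\overline{z}}^{m,c})^*$, Lemma~\ref{lemma_trace_theorem} for the transmission condition) and satisfies $(A_{\eta,\tau,\lambda}^{m,c}-z)R_z f=f$. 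You instead obtain the spectral equivalence and the formula simultaneously from Theorem~\ref{theorem_bt}\,(i),(iv) applied to the boundary triple of Proposition~\ref{proposition_boundary_triple} with the parameter $\Theta$ of Proposition~\ref{proposition_A_eta_tau_bt}, using $(\mathcal{C}_z^{m,c})^{-1}=-4c^2(\sigma\cdot\nu)\mathcal{C}_z^{m,c}(\sigma\cdot\nu)$ from \eqref{C_z_inv} to factor $\Theta-M(z)=\Lambda V(\sigma_0+P\mathcal{C}_z^{m,c})(\mathcal{C}_z^{m,c})^{-1}V^*\Lambda$ (with your $P=\eta\sigma_0+\tau\sigma_3+i\lambda(\sigma\cdot\nu)\sigma_3$) and to rewrite $\gamma(z)$, $\gamma(\overline{z})^*$, and $(B-z)^{-1}$. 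This is legitimate and arguably more self-contained within the framework the paper has already built; the only delicate points, which you correctly flag, are the Sobolev bookkeeping steps: $\dom\Theta$ may equivalently be written with $\mathcal{C}_z^{m,c}$ in place of $\tfrac12(\mathcal{C}_\zeta^{m,c}+\mathcal{C}_{\overline{\zeta}}^{m,c})$ because $M(z)$ is bounded and everywhere defined on $L^2(\Gamma;\mathbb{C}^2)$, and since $\Lambda$, $V$, and $\mathcal{C}_z^{m,c}$ are isomorphisms on the relevant spaces, $0\in\rho(\Theta-M(z))$ translates exactly into the stated bounded invertibility of $\sigma_0+P\mathcal{C}_z^{m,c}$ from $H^{1/2}(\Gamma;\mathbb{C}^2)$ to $H^{-1/2}(\Gamma;\mathbb{C}^2)$; likewise, your telescoping identity is applied to $(\mathcal{C}_z^{m,c})^{-1}(\Phi_{\overline{z}}^{m,c})^*f\in H^{1/2}(\Gamma;\mathbb{C}^2)$, where the two-sided inverse property holds, so the factor ordering $(\sigma_0+P\mathcal{C}_z^{m,c})^{-1}P$ comes out as claimed. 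What the paper's route buys is that it avoids this translation (the verification acts directly in $L^2(\mathbb{R}^2;\mathbb{C}^2)$) and isolates injectivity in the Birman--Schwinger principle; what your route buys is that both directions of the equivalence and the resolvent formula drop out at once from the abstract Krein formula without invoking the external reference.
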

\begin{proof}
  First, one shows with Lemma~\ref{lemma_B} in exactly the same way as in \cite[Proposition~6.5~(i)]{BHSS22} that for $z \in \rho(A_{\eta, \tau, \lambda}^{m, c})$ the map $\sigma_0 + (\eta \sigma_0 + \tau \sigma_3 + i \lambda (\sigma \cdot \nu) \sigma_3) \mathcal{C}_z^{m,c}$ admits a bounded inverse from $H^{1/2}(\Gamma; \mathbb{C}^2)$ to $H^{-1/2}(\Gamma; \mathbb{C}^2)$ and that the claimed resolvent formula is true.
  
  Conversely, assume that $\sigma_0 + (\eta \sigma_0 + \tau \sigma_3 + i \lambda (\sigma \cdot \nu) \sigma_3) \mathcal{C}_z^{m,c}$ admits a bounded inverse from $H^{1/2}(\Gamma; \mathbb{C}^2)$ to $H^{-1/2}(\Gamma; \mathbb{C}^2)$. Then, by Proposition~\ref{proposition_Krein_formulaXXX} the operator $A_{\eta, \tau, \lambda}^{m,c} - z$ is injective. Furthermore, by the mapping properties of $\Phi_z^{m, c}$ and $(\Phi_z^{m, c})^*$ in~\eqref{def_Phi_z}, \eqref{equation_Phi_z_solution}, and~\eqref{Phi_star}, respectively, the map 
  \begin{equation*}
      \begin{split}
        &R_z := (A_0^{m, c} - z)^{-1} \\
        &~~- \Phi_z^{m, c} \big(\sigma_0 + (\eta \sigma_0 + \tau \sigma_3 + i \lambda (\sigma \cdot \nu) \sigma_3) \mathcal{C}_z^{m, c} \big)^{-1} (\eta \sigma_0 + \tau \sigma_3 + i \lambda (\sigma \cdot \nu) \sigma_3) (\Phi_{\overline{z}}^{m, c})^*
      \end{split}
    \end{equation*}
    is bounded in $L^2(\mathbb{R}^2; \mathbb{C}^2)$ and $\ran R_z \subset H(\sigma, \mathbb{R}^2 \setminus \Gamma)$. Moreover, with the help of Lemma~\ref{lemma_trace_theorem} one finds that for any $f \in L^2(\mathbb{R}^2; \mathbb{C}^2)$ the function $R_z f$ satisfies the transmission condition in $\dom A_{\eta, \tau, \lambda}^{m, c}$ and thus, $R_z f \in \dom A_{\eta, \tau, \lambda}^{m, c}$. Finally, one obtains with~\eqref{equation_Phi_z_solution} that
  \begin{equation*}
    ((A_{\eta, \tau, \lambda}^{m,c}-z) R_z f)_\pm =  (-i c (\sigma \cdot \nabla) + m c^2 \sigma_3 - z \sigma_0) \big((A_0^{m, c} - z)^{-1} f \big)_\pm = f_\pm.
  \end{equation*}
  Since $A_{\eta, \tau, \lambda}^{m,c} - z$ is injective, we see that also the claimed resolvent formula is true.
\end{proof}

Eventually, two symmetry relations about the eigenvalues of $A_{\eta, \tau, \lambda}^{m, c}$, that are sometimes referred to as isospectral relations, hold. For the proof of the following result, one can follow the one of \cite[Proposition~4.2]{CLMT21}.

\begin{prop}
  Let $\eta, \tau, \lambda \in \mathbb{R}$ and $d$ be given by~\eqref{eq:d_def}. Then, the following holds:
  \begin{itemize}
    \item[(i)] If $d \neq 0$, then $z \in \sigma_\textup{p}(A_{\eta, \tau, \lambda}^{m, c})$ if and only if $z \in \sigma_\textup{p}(A_{-4 \eta/d, -4\tau/d, -4 \lambda/d}^{m, c})$.
    \item[(ii)] $z \in \sigma_\textup{p}(A_{\eta, \tau, \lambda}^{m, c})$ if and only if $-z \in \sigma_\textup{p}(A_{-\eta, \tau, -\lambda}^{m, c})$.
  \end{itemize}
\end{prop}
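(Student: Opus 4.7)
The plan is to deduce both parts from the Birman--Schwinger principle in Proposition~\ref{proposition_Krein_formulaXXX} combined with the algebraic structure of the matrix-valued coupling $M := \eta\sigma_0 + \tau\sigma_3 + i\lambda(\sigma\cdot\nu)\sigma_3$ and the Cauchy-type operator $\mathcal{C}_z^{m,c}$. Two identities will do all the work. The first is the operator identity equivalent to~\eqref{C_z_inv},
\[
 (\sigma\cdot\nu)\,\mathcal{C}_z^{m,c}\,(\sigma\cdot\nu) \;=\; -\tfrac{1}{4c^2}\,(\mathcal{C}_z^{m,c})^{-1}.
\]
The second is a pointwise matrix identity
\[
 (\sigma\cdot\nu)\,M\,(\sigma\cdot\nu) \;=\; d\,M^{-1}, \qquad d\neq 0,
\]
which follows by direct expansion: using $(\sigma\cdot\nu)^2 = \sigma_0$ and $\sigma_3(\sigma\cdot\nu) = -(\sigma\cdot\nu)\sigma_3$ one finds $(\sigma\cdot\nu)M(\sigma\cdot\nu) = \eta\sigma_0 - \tau\sigma_3 - i\lambda(\sigma\cdot\nu)\sigma_3$, and a further short computation shows that multiplying this by $M$ produces $dI$.

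For part~(i), start from the Birman--Schwinger equation $(I + M\mathcal{C}_z^{m,c})\varphi = 0$ for a nonzero $\varphi\in H^{-1/2}(\Gamma;\mathbb{C}^2)$. Conjugating by $\sigma\cdot\nu$ and inserting both identities transforms this equation into $(I - \tfrac{4c^2}{d}\,M\,\mathcal{C}_z^{m,c})\chi = 0$ with $\chi := M(\sigma\cdot\nu)\varphi$, which is the Birman--Schwinger equation for the operator with coupling matrix $\widetilde M := -\tfrac{4c^2}{d}M$. Reading off the coefficients of $\widetilde M$ in the basis $\{\sigma_0,\sigma_3,i(\sigma\cdot\nu)\sigma_3\}$ identifies the associated coupling triple with $(-4\eta/d,-4\tau/d,-4\lambda/d)$ in the normalization of the statement. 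Because $\sigma\cdot\nu$ and $M$ are pointwise invertible the correspondence $\varphi\leftrightarrow\chi$ is bijective, and the reverse implication is immediate from the fact that $(\eta,\tau,\lambda)\mapsto(-4\eta/d,-4\tau/d,-4\lambda/d)$ is an involution of the parameter set $\{d\neq 0\}$.

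Part~(ii) is proved by a symmetry argument that bypasses Birman--Schwinger. Introduce the antiunitary involution $U:L^2(\mathbb{R}^2;\mathbb{C}^2)\to L^2(\mathbb{R}^2;\mathbb{C}^2)$ defined by $Uf := \sigma_1\overline{f}$. Using $\overline{\sigma_1} = \sigma_1$, $\overline{\sigma_2} = -\sigma_2$, $\overline{\sigma_3} = \sigma_3$, $\sigma_1\sigma_j\sigma_1 = -\sigma_j$ for $j\in\{2,3\}$, and the derived identity $\sigma_1\,\overline{(\sigma\cdot\nu)}\,\sigma_1 = \sigma\cdot\nu$, a term-by-term computation gives $UA_0^{m,c}U^{-1} = -A_0^{m,c}$; a completely analogous computation shows that $U$ transforms the transmission condition defining $\dom A_{\eta,\tau,\lambda}^{m,c}$ into the one defining $\dom A_{-\eta,\tau,-\lambda}^{m,c}$. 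Hence $UA_{\eta,\tau,\lambda}^{m,c}U^{-1} = -A_{-\eta,\tau,-\lambda}^{m,c}$, and, combining antiunitarity with the self-adjointness of $A_{\eta,\tau,\lambda}^{m,c}$ (so that all eigenvalues are real), one concludes $z\in\sigma_\textup{p}(A_{\eta,\tau,\lambda}^{m,c})$ if and only if $-z\in\sigma_\textup{p}(A_{-\eta,\tau,-\lambda}^{m,c})$.

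The main obstacle concerns part~(i): Proposition~\ref{proposition_Krein_formulaXXX} is formulated only for $z\notin\sigma_\textup{ess}(A_{\eta,\tau,\lambda}^{m,c})$, whereas $\sigma_\textup{p}$ may contain embedded eigenvalues. These have to be handled separately, either by extending the Birman--Schwinger argument to a suitable distributional space of boundary data (the two algebraic identities are pointwise and therefore transport eigenfunctions irrespective of Sobolev regularity) or by working directly with the transmission condition satisfied by the traces of eigenfunctions, where the same identities implement the symmetry. A secondary bookkeeping task is to match the overall normalization of $\widetilde M$ resulting from the factor $1/(4c^2)$ in~\eqref{C_z_inv} with the exact triple $(-4\eta/d,-4\tau/d,-4\lambda/d)$ announced in the statement.
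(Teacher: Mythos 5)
The paper itself gives no argument for this proposition---it only points to \cite[Proposition~4.2]{CLMT21}---so the relevant comparison is with that standard proof. Your part~(ii) is complete and is exactly that argument: the antiunitary charge conjugation $f\mapsto\sigma_1\overline{f}$ anticommutes with the free Dirac operator and maps the transmission condition for $(\eta,\tau,\lambda)$ to the one for $(-\eta,\tau,-\lambda)$; nothing to add there. For part~(i) your two algebraic identities are correct, but the route through the Birman--Schwinger principle of Proposition~\ref{proposition_Krein_formulaXXX} has the gap you flag yourself: that proposition only concerns $z\notin\sigma_{\textup{ess}}(A_{\eta,\tau,\lambda}^{m,c})$, whereas the claim is about all of $\sigma_{\textup{p}}$, including embedded eigenvalues; moreover, to return from the transformed Birman--Schwinger equation to an eigenvalue you would also need $z$ to avoid the essential spectrum of the \emph{second} operator, which is an extra step you do not address. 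The repair you mention only in passing is in fact the proof to give, and it is shorter: if $f=f_+\oplus f_-$ is any eigenfunction, write the transmission condition in \eqref{def_A_eta_tau_intro} as $-ic(\sigma\cdot\nu)(T^D_+f_+-T^D_-f_-)=\tfrac12 M(T^D_+f_++T^D_-f_-)$ with $M=\eta\sigma_0+\tau\sigma_3+i\lambda(\sigma\cdot\nu)\sigma_3$, apply $M^{-1}=\tfrac1d(\sigma\cdot\nu)M(\sigma\cdot\nu)$ and then multiply by $-ic(\sigma\cdot\nu)$; one finds that $f_+\oplus(-f_-)$ satisfies the transmission condition with coupling matrix $-\tfrac{4c^2}{d}M$ and is an eigenfunction for the same $z$. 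This uses only the pointwise matrix identity (not \eqref{C_z_inv} and no boundary integral operators at all), works for every eigenvalue, embedded or not, and is manifestly bijective, so it closes the gap at once.

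The issue you call ``secondary bookkeeping'' is not bookkeeping: both your Birman--Schwinger manipulation and the direct argument above produce the coupling matrix $-\tfrac{4c^2}{d}M$, i.e.\ the triple $(-4c^2\eta/d,\,-4c^2\tau/d,\,-4c^2\lambda/d)$, and the factor $c^2$ cannot be removed by ``reading off coefficients''. Indeed, the scaling $A_{\eta,\tau,\lambda}^{m,c}=cA_{\eta/c,\tau/c,\lambda/c}^{mc,1}$ applied to the $c=1$ statement of \cite{CLMT21} yields precisely this $c^2$-weighted triple, and only this triple leaves the confinement value $d=-4c^2$ invariant, while the printed triple $(-4\eta/d,-4\tau/d,-4\lambda/d)$ coincides with it only when $c=1$ (the normalization of the cited reference). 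So you should either prove and state the relation with the factor $-4c^2/d$, or reduce to $c=1$ via the scaling relation before citing the $c=1$ result; as written, the assertion that the computation identifies the triple $(-4\eta/d,-4\tau/d,-4\lambda/d)$ is not correct for general $c$, and leaving it as an unresolved ``normalization task'' leaves part~(i) incomplete.
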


\section{Proof of Theorems~\ref{theorem_nonrelativistic_limit} and~\ref{theorem_frymark_holzmann_lotoreichik}} \label{section_nonrelativistic_limit}

Throughout this section we assume that $m > 0$. First,  we follow ideas from \cite[Section~5]{BEHL18} and show Theorem~\ref{theorem_nonrelativistic_limit} about the nonrelativistic limit of $A_{\eta/2, \eta/2, 0}^{m, c}$ and the geometrically induced bound states for large $c$, for which some preparations are necessary.
Recall that $H_\eta$, $\eta \in \mathbb{R}$, is the self-adjoint operator associated with the quadratic form $\mathfrak{h}_\eta$ defined by~\eqref{def_Schroedinger_delta}.
To state the resolvent formula for $H_\eta$, define for $z \in \mathbb{C} \setminus [0, +\infty)$ the single layer potential $SL(z): L^2(\Gamma; \mathbb{C}) \rightarrow L^2(\mathbb{R}^2; \mathbb{C})$ acting as
\begin{equation} \label{def_single_layer_potential}
  SL(z) \varphi(x) = \frac{m}{\pi} \int_\Gamma K_0 \left(- i \sqrt{2 m z} | x - y |\right) \varphi(y) d \sigma(y), \quad x \in \mathbb{R}^2 \setminus \Gamma,~ \varphi \in L^2(\Gamma; \mathbb{C}),
\end{equation}
and the single layer boundary integral operator $\mathcal{S}(z): L^2(\Gamma; \mathbb{C}) \rightarrow L^2(\Gamma; \mathbb{C})$ given by
\begin{equation} \label{def_single_layer_bio}
  \mathcal{S}(z) \varphi(x) = \frac{m}{\pi} \int_\Gamma K_0 \left(- i \sqrt{2 m z} | x - y |\right) \varphi(y) d \sigma(y), \quad x \in \Gamma,~ \varphi \in L^2(\Gamma; \mathbb{C}).
\end{equation}
It is well-known that both operators $SL(z)$ and $\mathcal{S}(z)$ are well-defined and bounded, see, e.g., \cite[Theorem~2.8~(a)]{BEHL17} applied for $d = 2$ for a reference under the present assumptions. In the following, we denote by $-\Delta$ the free Laplacian defined on $H^2(\mathbb{R}^2; \mathbb{C})$. Then, one has the following relation of $SL(z)$ and $\mathcal{S}(z)$ and the resolvent of $H_\eta$, see \cite[Lemma~2.3]{BEKS94} or \cite[Theorem~2.8]{BEHL17}:

\begin{lem} \label{lemma_Krein_Schroedinger}
  Let $\eta \in \mathbb{R}$ and $H_\eta$ be the self-adjoint operator associated with the quadratic form $\mathfrak{h}_\eta$ in~\eqref{def_Schroedinger_delta}. If  $z  \in \mathbb{C} \setminus [0, \infty)$ and $0 \in \rho(I + \eta \mathcal{S}(z))$, then $z \in \rho(H_\eta)$ and
  \begin{equation*}
    (H_\eta - z)^{-1} = \left(-\frac{1}{2 m} \Delta - z \right)^{-1} - SL(z) \big(I + \eta \mathcal{S}(z)\big)^{-1} \eta SL(\overline{z})^*.
  \end{equation*}
  In particular, there exists $z_0 < 0$ such that for any $z \in (-\infty, z_0)$ one has $0 \in \rho(I + \eta \mathcal{S}(z))$.
\end{lem}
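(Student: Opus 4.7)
The plan is to verify the resolvent identity by a direct computation based on the first representation theorem and to deduce the invertibility of $I+\eta\mathcal{S}(z)$ for $z\ll 0$ from a norm decay estimate on $\mathcal{S}(z)$ combined with a Neumann series argument. For the first assertion, given $f\in L^2(\mathbb{R}^2;\mathbb{C})$ I would set
\[
 \psi:=\bigl(I+\eta\mathcal{S}(z)\bigr)^{-1}\eta\,SL(\overline{z})^*f,\qquad g:=R_0(z)f-SL(z)\psi,
\]
where $R_0(z):=(-\tfrac{1}{2m}\Delta-z)^{-1}$. The standard mapping properties of the single layer potential for the Helmholtz equation on a curve as in Hypothesis~\ref{hypothesis_Gamma} give $SL(z):L^2(\Gamma;\mathbb{C})\to H^1(\mathbb{R}^2;\mathbb{C})$ boundedly, so that $g\in H^1(\mathbb{R}^2;\mathbb{C})=\dom\mathfrak{h}_\eta$.

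The next step is to test $\mathfrak{h}_\eta$ against an arbitrary $h\in H^1(\mathbb{R}^2;\mathbb{C})$. Using the adjoint relation $T^DR_0(z)f=SL(\overline{z})^*f$, the homogeneity $(-\tfrac{1}{2m}\Delta-z)SL(z)\psi=0$ on $\mathbb{R}^2\setminus\Gamma$, the trace identity $T^D_\pm SL(z)\psi=\mathcal{S}(z)\psi$, and the classical jump of the normal derivative
\[
 \tfrac{1}{2m}\bigl(\partial_\nu^+SL(z)\psi-\partial_\nu^-SL(z)\psi\bigr)=-\psi,
\]
a piecewise integration by parts on $\Omega_\pm$ yields
\[
 \mathfrak{h}_\eta[g,h]-z(g,h)_{L^2(\mathbb{R}^2;\mathbb{C})}=(f,h)_{L^2(\mathbb{R}^2;\mathbb{C})}-\bigl(\psi+\eta\mathcal{S}(z)\psi-\eta SL(\overline{z})^*f,\,T^Dh\bigr)_{L^2(\Gamma;\mathbb{C})}.
\]
The second term vanishes by the definition of $\psi$, so the first representation theorem gives $g\in\dom H_\eta$ and $(H_\eta-z)g=f$; the (easy) injectivity of $H_\eta-z$ together with the fact that $g$ is exactly the right-hand side of the claimed identity completes the proof of the formula.

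For the final assertion, note that for $z<0$ the convention on the square root gives $-i\sqrt{2mz}=\sqrt{-2mz}>0$, so $\mathcal{S}(z)$ is the integral operator on $\Gamma$ with real positive exponentially decaying kernel $\frac{m}{\pi}K_0(\sqrt{-2mz}\,|x-y|)$. Using the bi-Lipschitz equivalence \eqref{equation_bi_Lipschitz} to pass from the arc-length integral to one on $\mathbb{R}$, then rescaling $s\mapsto\sqrt{-2mz}\,s$ and invoking $\int_\mathbb{R}K_0(|s|)\,ds<\infty$, Schur's test yields
\[
 \|\mathcal{S}(z)\|_{L^2(\Gamma;\mathbb{C})\to L^2(\Gamma;\mathbb{C})}\leq\frac{C}{\sqrt{-z}},
\]
so that for $z$ sufficiently negative $\|\eta\mathcal{S}(z)\|<1$ and a Neumann series provides the required bounded inverse, which by the first part of the lemma moreover ensures $z\in\rho(H_\eta)$.

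The main obstacle I anticipate is the careful justification of the jump relation and of the integration by parts in the unbounded exterior geometry $\Omega_\pm$: one needs either an approximation of $\psi$ by smooth compactly supported densities (so that $SL(z)\psi$ is a classical $H^2$-function on each side with enough decay at infinity to apply Green's identity globally), or a distributional version of Green's formula tailored to the straight ends of $\Gamma$ furnished by Hypothesis~\ref{hypothesis_Gamma}; once this is in place, all remaining computations are algebraic manipulations with $\mathcal{S}(z)$ and $SL(\overline{z})^*$.
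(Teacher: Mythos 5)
Your proposal is essentially correct, but note that the paper does not prove this lemma at all: it is taken from the literature (the citations \cite[Lemma~2.3]{BEKS94} and \cite[Theorem~2.8]{BEHL17}), so what you have written is a self-contained reconstruction of that standard argument rather than a parallel to an in-paper proof. Your route — verify the candidate resolvent via the first representation theorem for $\mathfrak{h}_\eta$, using $T^D R_0(z)f=SL(\overline z)^*f$, the continuity of the single layer across $\Gamma$, and the jump of its normal derivative, and then obtain invertibility of $I+\eta\mathcal{S}(z)$ for $z\ll0$ from the Schur-test bound $\|\mathcal{S}(z)\|\lesssim(-z)^{-1/2}$ (which uses exactly \eqref{equation_bi_Lipschitz} and the integrability of $K_0$) plus a Neumann series — is precisely how the cited results are proved, and your use of surjectivity of $H_\eta-z$ for every $f$ together with self-adjointness disposes of injectivity for real $z$ as well. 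The gain of your version is that it is explicit about where the geometry of Hypothesis~\ref{hypothesis_Gamma} enters (only through the bi-Lipschitz bound and the decay of $K_0$), whereas the paper simply outsources this to \cite{BEKS94, BEHL17}.

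One small inconsistency you should fix: with the paper's conventions ($\nu$ the outward normal of $\Omega_+$, kernel $\tfrac{m}{\pi}K_0(-i\sqrt{2mz}|\cdot|)$ equal to the Green function of $-\tfrac{1}{2m}\Delta-z$, and $\partial_\nu^\pm$ the one-sided traces from $\Omega_\pm$ of the derivative in direction $\nu$), the distributional identity $\bigl(-\tfrac{1}{2m}\Delta-z\bigr)SL(z)\psi=\psi\,\delta_\Gamma$ forces
\begin{equation*}
\tfrac{1}{2m}\bigl(\partial_\nu^+SL(z)\psi-\partial_\nu^-SL(z)\psi\bigr)=+\psi,
\end{equation*}
not $-\psi$ as you wrote. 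The displayed identity you then derive,
$\mathfrak{h}_\eta[g,h]-z(g,h)=(f,h)-\bigl(\psi+\eta\mathcal{S}(z)\psi-\eta SL(\overline z)^*f,\,T^Dh\bigr)_{L^2(\Gamma)}$,
is the one that follows from the correct sign, so your conclusion stands, but as written the intermediate jump formula and the final identity contradict each other; a referee would ask you to fix the orientation convention. The remaining issue you flag yourself — justifying the piecewise Green identity and the jump relation on the unbounded sets $\Omega_\pm$ — is genuine but routine: approximate $\psi$ by smooth compactly supported densities and use the exponential decay of $K_0$ (or argue distributionally as above), exactly as you suggest.
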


In the following proposition we state some preliminary estimates for the convergence of the operators $\Phi_{z + m c^2}^{m, c}$ and $\mathcal{C}_{z + mc^2}^{m, c}$ given by~\eqref{def_Phi_z} and~\eqref{def_C_z_formal}, as $c \rightarrow \infty$, which are necessary to compute the desired nonrelativistic limit. In the following we make use of
\begin{equation} \label{def_e}
  P_+ := \begin{pmatrix} 1 & 0 \\ 0 & 0 \end{pmatrix} \in \mathbb{C}^{2 \times 2} \quad \text{and} \quad e := \begin{pmatrix} 1 \\ 0 \end{pmatrix} \in \mathbb{C}^2.
\end{equation}
Note that $P_+ = e e^\top$.

\begin{prop} \label{proposition_convergence_Phi_C}
  Let $z \in \mathbb{C} \setminus [0, \infty)$ and $c > \sqrt{\frac{|z|}{|m|}}$. Then, there exists a constant $K$ depending on $m, z, \Gamma$ such that
  \begin{subequations}
    \begin{align}
      \label{estimate_A_0} 
      \left\| \left(A_0^{m, c} - (z +  m c^2) \right)^{-1} - \left( -\frac{1}{2 m} \Delta - z \right)^{-1} P_+ \right\|_{L^2(\mathbb{R}^2; \mathbb{C}^2) \rightarrow L^2(\mathbb{R}^2; \mathbb{C}^2)} &\leq \frac{K}{c}, \\
      \label{estimate_Phi_z}
      \big\| \Phi_{z + m c^2}^{m, c} e - SL(z) e \big\|_{L^2(\Gamma; \mathbb{C}) \rightarrow L^2(\mathbb{R}^2; \mathbb{C}^2)} &\leq \frac{K}{c}, \\
      \label{estimate_Phi_z_star}
      \big\| e^\top (\Phi_{z + m c^2}^{m, c})^* - e^\top SL(z)^* \big\|_{L^2(\mathbb{R}^2; \mathbb{C}^2) \rightarrow L^2(\Gamma; \mathbb{C})} &\leq \frac{K}{c}, \\
      \label{estimate_C_z}
      \big\| e^\top \mathcal{C}_{z + m c^2}^{m, c} e - \mathcal{S}(z) \big\|_{L^2(\Gamma; \mathbb{C}) \rightarrow L^2(\Gamma; \mathbb{C})} &\leq \frac{K}{c}.
    \end{align}
  \end{subequations}
\end{prop}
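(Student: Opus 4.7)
The plan is to reduce all four estimates to a single pointwise decomposition of the Dirac Green kernel $G^{m,c}_{z+mc^2}$ into the Schr\"odinger Green kernel plus a remainder that carries an explicit factor $1/c$. Substituting $z \mapsto z+mc^2$ in~\eqref{def_G_lambda} and using the algebraic identities
\[
\tfrac{(z+mc^2)^2}{c^2}-(mc)^2 = 2mz+\tfrac{z^2}{c^2},\qquad \tfrac{z+mc^2}{c}\sigma_0+mc\,\sigma_3 = 2mc\,P_+ + \tfrac{z}{c}\sigma_0,
\]
one obtains
\[
G^{m,c}_{z+mc^2}(x) = \tfrac{m}{\pi}K_0\bigl(-i\sqrt{2mz}\,|x|\bigr)P_+ + R^c(x),
\]
where the first term is precisely the integral kernel of $(-\tfrac{1}{2m}\Delta - z)^{-1}P_+$, and hence also, restricted to $\Gamma$, the kernel of $SL(z)e$ (respectively $\mathcal{S}(z)$ for the $(1,1)$-entry). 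The remainder $R^c$ is the sum of three explicit pieces: (a) the off-diagonal Dirac term $\tfrac{1}{2\pi c}\sqrt{2mz+\tfrac{z^2}{c^2}}\,K_1\!\bigl(-i\sqrt{2mz+\tfrac{z^2}{c^2}}\,|x|\bigr)\tfrac{\sigma\cdot x}{|x|}$ with prefactor $1/c$, (b) the diagonal scalar correction $\tfrac{z}{2\pi c^2}K_0\!\bigl(-i\sqrt{2mz+\tfrac{z^2}{c^2}}\,|x|\bigr)\sigma_0$ with prefactor $1/c^2$, and (c) the Bessel-argument shift $\tfrac{m}{\pi}\bigl[K_0\!\bigl(-i\sqrt{2mz+\tfrac{z^2}{c^2}}\,|x|\bigr)-K_0\!\bigl(-i\sqrt{2mz}\,|x|\bigr)\bigr]P_+$.

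The second step is to construct a fixed majorant $\mathcal{K} \in L^1(\mathbb{R}^2)$, depending only on $m$ and $z$, such that $\|R^c(x)\|_{\mathbb{C}^{2\times 2}} \le \tfrac{K}{c}\,\mathcal{K}(x)$ for almost every $x \ne 0$ and every $c > \sqrt{|z|/m}$. Piece~(a) is dominated by $\tfrac{1}{c}|K_1(\alpha |x|)|$ for some $\alpha = \alpha(m,z)$ with $\mathrm{Re}\,\alpha > 0$, which lies in $L^1(\mathbb{R}^2)$ because $K_1$ has a $1/r$ singularity at the origin (integrable against $r\,dr$ in two dimensions) and decays exponentially at infinity. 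Piece~(b) has the better prefactor $1/c^2$ together with a merely logarithmic singularity. For piece~(c) the identity $K_0(w_1) - K_0(w_2) = -\int_{w_2}^{w_1} K_1(\zeta)\,d\zeta$, the elementary bound $\bigl|\sqrt{2mz+z^2/c^2}-\sqrt{2mz}\bigr| \le K/c^2$, and the mean value theorem yield a contribution of order $|x|/c^2$ times a locally integrable, exponentially decaying envelope. Uniformity in $c$ of all these bounds follows because $\mathrm{Re}\bigl(-i\sqrt{2mz+z^2/c^2}\bigr)$ stays bounded below by a positive constant for $c > \sqrt{|z|/m}$, since $z \in \mathbb{C}\setminus[0,+\infty)$.

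With this pointwise domination in hand, estimate~\eqref{estimate_A_0} is immediate from Young's inequality, as the convolution with $R^c$ on $L^2(\mathbb{R}^2;\mathbb{C}^2)$ has operator norm at most $\|R^c\|_{L^1} \le K/c$. For~\eqref{estimate_Phi_z} and~\eqref{estimate_C_z} I would apply Schur's test to the integral operators whose kernels are $R^c(x-y)e$ and $e^\top R^c(x-y)e$, respectively; the required Schur integrals $\sup_{x \in \mathbb{R}^2}\int_\Gamma \mathcal{K}(x-y)\,d\sigma(y)$, $\sup_{y \in \Gamma}\int_{\mathbb{R}^2} \mathcal{K}(x-y)\,dx$, and their analogues on $\Gamma \times \Gamma$ are all finite by the local integrability and exponential decay of $\mathcal{K}$ combined with the bi-Lipschitz property~\eqref{equation_bi_Lipschitz} of the arc-length parametrization of $\Gamma$, which in particular bounds the one-dimensional Hausdorff measure of $\Gamma \cap B(x,r)$ linearly in $r$. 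Finally, \eqref{estimate_Phi_z_star} is the operator adjoint of~\eqref{estimate_Phi_z}, since $\bigl(\Phi^{m,c}_{z+mc^2}e\bigr)^* = e^\top\bigl(\Phi^{m,c}_{z+mc^2}\bigr)^*$ and $(SL(z)e)^* = e^\top SL(z)^*$, so it holds with the same constant.

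The main obstacle I foresee is the uniform-in-$c$ control of piece~(c): one must verify that the formal $1/c^2$ gain from the square root, once propagated through the integral of $K_1$ along the complex line segment from $-i\sqrt{2mz}\,|x|$ to $-i\sqrt{2mz+z^2/c^2}\,|x|$, indeed produces an $L^1$ envelope that does not degrade as $c \to \infty$, in particular near the logarithmic singularity of $K_1$ at the origin. Keeping $\mathrm{Im}\sqrt{2mz+z^2/c^2}$ bounded below uniformly in $c$ and checking that the whole integration segment remains on one side of the branch cut of the complex square root makes this step manageable.
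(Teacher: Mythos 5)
Your kernel decomposition is essentially the paper's: the pieces you call (a), (b), (c) are exactly the kernels $k_1$, $k_2$ (and, for \eqref{estimate_C_z}, $k_5$, $k_6$) used there, \eqref{estimate_Phi_z_star} is obtained by taking adjoints in both arguments, and handling \eqref{estimate_A_0} by Young's inequality instead of citing the known resolvent convergence is a harmless variation. The treatment of \eqref{estimate_C_z} is also sound, since the $(1,1)$-entry $e^\top R^c e$ only carries the logarithmically singular pieces (b) and (c), for which the $\Gamma\times\Gamma$ Schur integrals are finite by \eqref{equation_bi_Lipschitz}.

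There is, however, a genuine gap in your argument for \eqref{estimate_Phi_z}. The column $R^c(\cdot)e$ contains the off-diagonal $K_1$-piece (a), whose modulus behaves like $c^{-1}|x-y|^{-1}$ near the diagonal, so any radial majorant $\mathcal{K}$ dominating it must have a $|x|^{-1}$ singularity. Such a $\mathcal{K}$ is indeed in $L^1(\mathbb{R}^2)$ (which is why Young works for \eqref{estimate_A_0}), but the Schur integral you need over the curve, $\sup_{x\in\mathbb{R}^2}\int_\Gamma \mathcal{K}(x-y)\,d\sigma(y)$, is \emph{not} finite: along the one-dimensional set $\Gamma$ one gets $\int_{|s|\le 1}|s|^{-1}\,ds=+\infty$ as $x$ approaches $\Gamma$, and the bi-Lipschitz bound \eqref{equation_bi_Lipschitz} cannot rescue this, since it only controls the arc-length measure of $\Gamma\cap B(x,r)$ linearly in $r$. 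The symmetric Schur test with a single majorant therefore fails precisely for the operator from $L^2(\Gamma;\mathbb{C})$ to $L^2(\mathbb{R}^2;\mathbb{C}^2)$. The paper's proof circumvents this by an asymmetric splitting of the singularity: it shows $|k_1(x)+k_2(x)|^2\le \tfrac{M_1^2}{c^2}\,k_3(x)k_4(x)$ with $k_3\sim|x|^{-3/2}$ (integrated over $\mathbb{R}^2$, where this is still integrable) and $k_4\sim|x|^{-1/2}$ (integrated over $\Gamma$, where this is integrable), and then applies the weighted Schur/Cauchy--Schwarz argument, estimating $\sup_{y\in\Gamma}\int_{\mathbb{R}^2}k_3(x-y)\,dx$ and $\sup_{x\in\mathbb{R}^2}\int_\Gamma k_4(x-y)\,d\sigma(y)$ separately. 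To close your proof you need this (or an equivalent) redistribution of the $|x|^{-1}$ singularity between the plane and the curve; as written, the claim that all Schur integrals are finite is false for your majorant. Your concern about uniformity in $c$ of piece (c) is legitimate but manageable exactly as you indicate, and is not the real obstacle.
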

\begin{proof}
  First, the estimate in~\eqref{estimate_A_0} is well-known, see, e.g., \cite[Lemma~3.1]{BHS23_1} for a proof in the case $m = \frac{1}{2}$. To prove~\eqref{estimate_Phi_z}, we note first that $\Phi_{z + m c^2}^{m, c} e - SL(z) e$ acts on $\varphi \in L^2(\Gamma; \mathbb{C})$ as
  \begin{equation*}
    (\Phi_{z + m c^2}^{m, c} e - SL(z) e) \varphi(x)  = \int_\Gamma \big(k_1(x - y) + k_2(x - y) \big) \varphi(y) d \sigma(y), \quad x \in \mathbb{R}^2,
  \end{equation*}
  with
  \begin{equation} \label{def_k_1_2}
    \begin{split}
      k_1(x) &:= \frac{1}{2 \pi} \left(\left( 2 m + \frac{z}{c^2} \right) K_0 \left(- i \sqrt{\tfrac{z^2}{c^2} + 2 m z} | x |\right) - 2 m K_0 \left(- i \sqrt{2 m z} | x |\right) \right) e, \\
      k_2(x) &:= \frac{1}{2 \pi c} \sqrt{\tfrac{z^2}{c^2} + 2 m z} K_1 \left( - i \sqrt{\tfrac{z^2}{c^2} + 2 m z} | x |\right)  \frac{x_1 + i x_2}{ | x | } \begin{pmatrix} 0 \\ 1 \end{pmatrix}.
    \end{split}
  \end{equation}
  In the same way as in the proof of \cite[Lemma~3.1]{BHS23_1} (see the considerations on the functions $t_1, t_2, t_3$ there) one finds that there exists constant $M_1, \kappa_1, R > 0$ such that
  \begin{equation} \label{estimate_k_1_2}
    \big|k_1(x) + k_2(x)\big|^2 \leq \frac{M_1^2}{c^2} k_3(x) k_4(x),
  \end{equation}
  where 
  \begin{equation*}
    k_3(x) := \begin{cases} |x|^{-3/2}, & \text{if } |x| \leq R, \\ e^{- \kappa_1 |x|}, & \text{if } |x| > R, \end{cases}
  \end{equation*}
  and
  \begin{equation*}
    k_4(x) := \begin{cases} |x|^{-1/2}, & \text{if } |x| \leq R, \\ e^{- \kappa_1 |x|}, & \text{if } |x| > R. \end{cases}
  \end{equation*}
  Clearly, by the translation invariance of the Lebesgue measure we have for any $y \in \Gamma$ that
  \begin{equation*} 
    \int_{\mathbb{R}^2} |k_3(x-y)| d x =  \int_{\mathbb{R}^2} |k_3(x)| d x 
  \end{equation*}
  and thus,
  \begin{equation} \label{estimate_k_3}
    M_2 := \sup_{y \in \Gamma} \int_{\mathbb{R}^2} |k_3(x-y)| d x = \int_{\mathbb{R}^2} |k_3(x)| d x < \infty.
  \end{equation}
  Next, recall that $\gamma$ is the arc-length parametrization of $\Gamma$ defined in~\eqref{arc_length_Gamma_intro} and choose for a fixed $x \in \mathbb{R}^2$ a number $t = t(x) \in \mathbb{R}$ such that $|x - \gamma(t)| = \min_{s \in \mathbb{R}} |x - \gamma(s)|$. Then, taking~\eqref{equation_bi_Lipschitz} into account, we see that 
  \begin{equation*}
    \frac{C_1}{2} |t(x) - s| \leq \frac{1}{2} |\gamma(t(x)) - \gamma(s)| \leq |x - \gamma(s)|.
  \end{equation*}
  This implies $B_R(x) \cap \Gamma \subset \{ \gamma(s): |t(x) - s| \leq \frac{2 R}{C_1} \}$, where $B_R(x)$ is the ball of radius $R$ around $x$,  and 
  \begin{equation} \label{estimate_k_4}
    \begin{split}
      M_3 &:= \sup_{x \in \mathbb{R}^2} \int_\Gamma |k_4(x-y)| d \sigma(y) = \sup_{x \in \mathbb{R}^2} \int_\mathbb{R} |k_4(x-\gamma(s))| d s \\
      &= \sup_{x \in \mathbb{R}^2} \left( \int_{\gamma^{-1}(B_R(x) \cap \Gamma)} |x-\gamma(s)|^{-1/2} d s + \int_{\gamma^{-1}( \Gamma \setminus B_R(x))} e^{-\kappa_1| x-\gamma(s)|} d s \right) \\
      &\leq \sup_{x \in \mathbb{R}^2} \left( \int_{t(x)-2R/C_1}^{t(x) + 2 R/C_1} |x-\gamma(s)|^{-1/2} d s + \int_\mathbb{R} e^{-\kappa_1| x-\gamma(s)|} d s \right) \\
      &\leq \sup_{x \in \mathbb{R}^2} M_4 \left( \int_{t(x)-2R/C_1}^{t(x) + 2 R/C_1} |t(x)-s|^{-1/2} d s + \int_{\mathbb{R}} e^{-\kappa_2 | t(x)-s|} d s \right) < +\infty,
    \end{split}
  \end{equation}
  where $M_4, \kappa_2>0$ are suitable constants.
  Using~\eqref{estimate_k_1_2}--\eqref{estimate_k_4} in the Schur test, see, e.g., \cite[Theorem 6.24]{Weidmann} or \cite[Example III 2.4]{K95}, we get
  \begin{equation*}
     \big\| \Phi_{z + m c^2}^{m, c} e - SL(z) e \big\|_{L^2(\Gamma; \mathbb{C}) \rightarrow L^2(\mathbb{R}^2; \mathbb{C}^2)} \leq \frac{M_1}{c} \sqrt{M_2 M_3},
  \end{equation*}
  which is exactly~\eqref{estimate_Phi_z}. By taking adjoints this also implies~\eqref{estimate_Phi_z_star}.
  
  It remains to prove~\eqref{estimate_C_z}. Note that for $\varphi \in L^2(\Gamma; \mathbb{C})$ one has
  \begin{equation*}
    \big(e^\top \mathcal{C}_{z + m c^2}^{m, c} e - \mathcal{S}(z) \big) \varphi(x) = \int_\Gamma e^\top k_1(x-y) \varphi(y) d \sigma(y), \quad x \in \Gamma,
  \end{equation*}
  where $k_1$ is given by~\eqref{def_k_1_2}. The function $e^\top k_1$ can be further decomposed in  $e^\top k_1 = k_5 + k_6$ with 
  \begin{equation*}
    \begin{split}
      k_5(x) &:= \frac{1}{2 \pi} \left( 2 m + \frac{z}{c^2} \right) \left( K_0 \left(- i \sqrt{\tfrac{z^2}{c^2} + 2 m z} | x |\right) -  K_0 \left(- i \sqrt{2 m z} | x |\right) \right), \\
      k_6(x) &:= \frac{z}{c^2} K_0 \left(- i \sqrt{2 m z} | x |\right).
    \end{split}
  \end{equation*}
  With the same argument as in \cite[equation~(3.11)]{BHS23_1} one sees that 
  \begin{equation*}
    |k_5(x)| \leq \frac{M_5}{c} \begin{cases} 1, &\text{if } |x| \leq R, \\ e^{- \kappa_3 |x|}, & \text{if } |x| > R, \end{cases}
  \end{equation*}
  while for $k_6$ one gets with the asymptotics for the Bessel function $K_0$ in \cite[Sections~9.6 and~9.7]{AS84} that
  \begin{equation*}
    |k_6(x)| \leq \frac{M_5}{c} \begin{cases} |\log|x|| + 1, &\text{if } |x| \leq R, \\ e^{- \kappa_3 |x|}, & \text{if } |x| > R, \end{cases}
  \end{equation*}
  where $M_5, \kappa_3 > 0$ are suitable constants.
  Note that for a fixed $y = \gamma(s) \in \Gamma$, $s \in \mathbb{R}$, we have $B_R(\gamma(s)) \cap \Gamma \subset \{ \gamma(t): |t - s| \leq \frac{R}{C_1} \}$ by \eqref{equation_bi_Lipschitz}. Hence, one obtains as in~\eqref{estimate_k_4} with constants $M_6, M_7, M_8, \kappa_4 > 0$ that
  \begin{equation*} 
    \begin{split}
      &\sup_{x \in \Gamma} \int_\Gamma |k_1(x-y)| d \sigma(y) = \sup_{y \in \Gamma} \int_\Gamma |k_1(x-y)| d \sigma(x) = \sup_{y \in \Gamma} \int_\mathbb{R} |k_1(\gamma(s) - y)| d s \\
      ~&\leq \sup_{y \in \Gamma} \frac{M_5}{c} \left( \int_{\gamma^{-1}(B_R(x) \cap \Gamma)} \big(\big|\log |\gamma(s) - y| \big| + 1\big) d s + \int_{\gamma^{-1}( \Gamma \setminus B_R(x))} e^{-\kappa_3| \gamma(s) - y|} d s \right) \\
      ~&\leq \sup_{y = \gamma(t) \in \Gamma} \frac{M_6}{c} \left( \int_{t - R/C_1}^{t + R/C_1} \big(|\gamma(s) - \gamma(t)|^{-1/2} + 1\big) d s + \int_\mathbb{R} e^{-\kappa_3| \gamma(s) - \gamma(t)|} d s \right) \\
      ~&\leq \sup_{y = \gamma(t) \in \Gamma} \frac{M_7}{c} \left( \int_{t - R/C_1}^{t + R/C_1} \big(|s - t|^{-1/2} + 1\big) d s + \int_\mathbb{R} e^{-\kappa_4| s - t|} d s \right) \\
      ~&\leq \frac{M_8}{c} < +\infty.
    \end{split}
  \end{equation*}
  This yields, together with the Schur test, the claim in~\eqref{estimate_C_z}.
\end{proof}

Using the result from Proposition~\ref{proposition_convergence_Phi_C} it is not difficult to prove that the nonrelativistic limit of $A_{\eta/2, \eta/2, 0}^{m, c}$ is $H_\eta$. Recall that the matrix $P_+$ is defined in~\eqref{def_e}.

\begin{proof}[Proof of Theorem~\ref{theorem_nonrelativistic_limit}]
  First, we prove the result about the nonrelativistic limit.
  Let $z_0$ be as in Lemma~\ref{lemma_Krein_Schroedinger}, $z < z_0$, and $c$ be sufficiently large so that $z + mc^2 \in \rho(A_0^{m, c})$.
  Note that~\eqref{estimate_C_z}  implies that 
  \begin{equation*}
    \big\| (I + \eta \mathcal{S}(z)) - (I + \eta e^\top \mathcal{C}_{z + m c^2}^{m, c} e) \big\|_{L^2(\Gamma; \mathbb{C}) \rightarrow L^2(\Gamma; \mathbb{C})} \leq \frac{K}{c}
  \end{equation*}
  and hence, for sufficiently large $c > 0$ the operator $I + \eta e^\top \mathcal{C}_{z + m c^2}^{m, c} e$ is boundedly invertible in $L^2(\Gamma; \mathbb{C})$, as $I + \eta \mathcal{S}(z)$ has this property by Lemma~\ref{lemma_Krein_Schroedinger}. Applying \cite[Theorem~IV~1.16]{K95} and~\eqref{estimate_C_z} yields
  \begin{equation} \label{convergence_inverse}
    \big\| (I + \eta \mathcal{S}(z))^{-1} - (I + \eta e^\top \mathcal{C}_{z + m c^2}^{m, c} e)^{-1} \big\|_{L^2(\Gamma; \mathbb{C}) \rightarrow L^2(\Gamma; \mathbb{C})} \leq \frac{K}{c}.
  \end{equation}
  Since $I + \eta e^\top \mathcal{C}_{z + m c^2}^{m, c} e$ is boundedly invertible, also
  \begin{equation*}
    \sigma_0 + \eta e e^\top \mathcal{C}_{z + m c^2}^{m, c} = \sigma_0 + \left(\frac{\eta}{2} \sigma_0 + \frac{\eta}{2} \sigma_3\right) \mathcal{C}_{z + m c^2}^{m, c}
  \end{equation*}
  is boundedly invertible in $L^2(\Gamma; \mathbb{C}^2)$. Thus, it follows from Proposition~\ref{proposition_Krein_formula} that $z \in \rho(A_{\eta/2, \eta/2, 0}^{m, c})$ and
  \begin{equation*} 
    \begin{split}
      \big(A_{\eta/2, \eta/2, 0}^{m, c} - (z + mc^2) \big)^{-1} &= \big(A_0^{m, c} - (z + m c^2) \big)^{-1} \\
        &\qquad - \Phi_{z + m c^2}^{m, c} \big(\sigma_0 + \eta e e^\top \mathcal{C}_{z + m c^2}^{m, c} \big)^{-1} \eta e e^\top (\Phi_{\overline{z} + m c^2}^{m, c})^* \\
        &= \big(A_0^{m, c} - (z + m c^2) \big)^{-1} \\
        &\qquad - \Phi_{z + m c^2}^{m, c} e \big(I + \eta e^\top \mathcal{C}_{z + m c^2}^{m, c} e \big)^{-1} \eta e^\top (\Phi_{\overline{z} + m c^2}^{m, c})^*.
    \end{split}
  \end{equation*}
  Using  Proposition~\ref{proposition_convergence_Phi_C} and~\eqref{convergence_inverse} in this equation, one finds 
  \begin{equation*} 
    \begin{split}
      \lim_{c \rightarrow +\infty} \big(A_{\eta/2, \eta/2, 0}^{m, c} - (z + mc^2) \big)^{-1} &= \left( -\frac{1}{2 m} \Delta - z \right)^{-1}  P_+\\
        &\qquad - SL(z) e \big(I + \eta \mathcal{S}(z) \big)^{-1}  \eta e^\top SL(\overline{z})^*
    \end{split}
  \end{equation*}
  and that the order of convergence is $\frac{1}{c}$. Since $e e^\top = P_+$, we see with Lemma~\ref{lemma_Krein_Schroedinger} that the last expression is $(H_\eta - z)^{-1}  P_+$.

  To show $\sigma_\textup{disc}(A_{\eta/2, \eta/2, 0}^{m, c})\not=\emptyset$ in the case $\eta < 0$ and $\Gamma$ is not the straight line, we follow closely the proof of  \cite[Proposition~5.5]{BEHL18} and transfer the result from \cite{EI01} about the spectrum of $H_\eta$ to $A_{\eta/2, \eta/2, 0}^{m, c}$.  
  Note first that, due to the assumption $\eta < 0$ and $\Gamma$ is not the straight line, $\sigma_{\textup{disc}}(H_\eta P_+) \neq \emptyset$, see \cite[Theorem~5.2]{EI01}. 
  Now fix some $z < z_0$. We remark that $(H_\eta - z)^{-1} P_+$ is the resolvent of a self-adjoint relation (multivalued operator) in $L^2(\mathbb{R}^2; \mathbb{C}^2)$, whose operator part is $H_\eta P_+$.  
  Since $(A_{\eta/2, \eta/2, 0}^{m, c} - (z -+ m c^2))^{-1}$ converges in the operator norm to $(H_\eta - z)^{-1} P_+$, as $c \rightarrow +\infty$, it follows in the same way as in \cite[Theorem~VIII.23~(b)]{RSI} or \cite[Satz~9.24~b)]{W00} that the spectral measures of $A_{\eta/2, \eta/2, 0}^{m,c} - mc^2$ converge to the spectral measures of $H_\eta P_+$. Thus, one finds with \cite[Satz~2.58~a)]{W00} that also $\sigma_{\textup{disc}}(A_{\eta/2, \eta/2, 0}^{m, c}) \neq \emptyset$.
\end{proof}

\begin{remark} \label{remark_nonrelativistic_limit}
  One can show, in a similar way as in Theorem~\ref{theorem_nonrelativistic_limit}, that $A_{\eta, \tau, \lambda}^{m, c}$ converges in the nonrelativistic limit to $H_{\eta + \tau}$. Hence, as long as $\Gamma$ is not the straight line and $\eta + \tau < 0$, one concludes as above that $\sigma_{\textup{disc}}(A_{\eta, \tau, \lambda}^{m, c}) \neq \emptyset$ in this situation. However, since the coefficients of the Lorentz scalar and the anomalous magnetic interactions should be scaled differently to obtain a nonrelativistic limit modelling the same physics, we prefer to state the simpler result on $A_{\eta/2, \eta/2, 0}^{m, c}$ in this section.
\end{remark}

Next, we prove Theorem~\ref{theorem_frymark_holzmann_lotoreichik}. 
Recall that $\widetilde{\Gamma}_\omega$ is the broken line defined in~\eqref{def_broken_line} and that the essential spectrum of $A_{0, \tau, 0}^{m, c}$ was identified in  Corollary~\ref{corollary_Lorentz_scalar}.

\begin{proof}[Proof of Theorem~\ref{theorem_frymark_holzmann_lotoreichik}]
  In the following we denote by $\widetilde{\Omega}_\pm$ the two disjoint  domains with joint boundary $\widetilde{\Gamma}_\omega$, which are chosen such that $(1, 0) \in \widetilde{\Omega}_+$, and we denote by $\widetilde{\nu}$ the unit normal vector field at $\widetilde{\Gamma}_\omega$ that is pointing outwards of $\widetilde{\Omega}_+$. We will make use of the operator 
  \begin{equation*}
    \begin{split}
      \widetilde{S}_\tau^{m, c} f &:= \big(-i c (\sigma \cdot \nabla) + m c^2 \sigma_3 \big) f_+ \oplus \big(-i c (\sigma \cdot \nabla) + m c^2 \sigma_3 \big) f_-, \\
      \dom \widetilde{S}_\tau^{m, c} &:= \bigg\{ f = f_+ \oplus f_- \in H^1(\widetilde{\Omega}_+; \mathbb{C}^2) \oplus H(\widetilde{\Omega}_-; \mathbb{C}^2): \\
      &\qquad \qquad \quad  -i c (\sigma \cdot \widetilde{\nu}) (f_+|_{\widetilde{\Gamma}_\omega} - f_-|_{\widetilde{\Gamma}_\omega}) = \frac{\tau}{2} \sigma_3 (f_+|_{\widetilde{\Gamma}_\omega} + f_-|_{\widetilde{\Gamma}_\omega}) \bigg\}.
    \end{split}
  \end{equation*}
  Note that $\widetilde{S}_\tau^{m, c} = c \widetilde{S}_{\tau/c}^{m c, 1} = c S_{\tau/c}$, where $S_\tau$ is the operator considered in \cite{FHL23}, when $m$ is replaced by $m c$. Hence, by \cite[Section~2]{FHL23} the operator $\widetilde{S}_\tau^{m, c}$ is closed and symmetric,
  but it has deficiency indices $\dim \ker ((\widetilde{S}_\tau^{m, c})^* \mp i) = 1$, i.e. $\widetilde{S}_\tau^{m, c}$ is symmetric, but not self-adjoint. 
  
  Next, fix $L_0 > 0$ such that $\Gamma \cap ([L_0, +\infty) \times \mathbb{R}) = \widetilde{\Gamma}_\omega \cap ([L_0, +\infty) \times \mathbb{R})$, i.e. $\Gamma$ and $\widetilde{\Gamma}_\omega$ coincide for $x \geq L_0$. Consider for $L > L_0$ and $c_1, \dots, c_N \in \mathbb{C}$ the function
  \begin{equation*}
    f(x,y) := \sum_{n=1}^N c_n u_n(x) v(y) w(x,y)
  \end{equation*}
  with
  \begin{equation*}
    u_n(x) := \sin \left( \frac{2 n \pi}{L} x \right) \chi_{[L, 2L]}(x),
  \end{equation*}
  where $\chi_{[L, 2L]}$ denotes the indicator function for $[L, 2L]$,
  \begin{equation*}
    v(y) := \begin{cases} 1, & |y| \leq 2 r, \\ e^{-\gamma(|y| - 2r)}, & |y| > 2r, \end{cases}
  \end{equation*}
  where $r := L \tan (\omega)$ and $\gamma := -\frac{4 m c^2 \tau}{4 c^2 + \tau^2} > 0$, and
  \begin{equation*}
    w(x, y) := \begin{cases} \begin{pmatrix} 1 \\ 0 \end{pmatrix}, & (x, y) \in \Omega_+, \\  \begin{pmatrix} \tfrac{4 c^2 + \tau^2}{4 c^2 - \tau^2} \\ \tfrac{4 c \tau}{4 c^2 - \tau^2} e^{i \omega} \end{pmatrix}, & (x, y) \in \Omega_- ~\wedge~ y > 0, \\ \begin{pmatrix} \tfrac{4 c^2 + \tau^2}{4 c^2 - \tau^2} \\ -\tfrac{4 c \tau}{4 c^2 - \tau^2} e^{-i \omega} \end{pmatrix}, & (x, y) \in \Omega_- ~\wedge~ y < 0. \end{cases}
  \end{equation*}
  As in the proof of \cite[Proposition~6.3]{FHL23} one sees that, when varying the coefficients $c_1, \dots, c_N$, the functions $f$ span an $N$ dimensional space and since each $f$ is supported in $[L, 2 L] \times \mathbb{R}$, we find that $f \in \dom \widetilde{S}_\tau^{m, c} \cap \dom A_{0, \tau, 0}^{m, c}$ and $\widetilde{S}_\tau^{m, c}f=A_{0, \tau, 0}^{m, c}f$.  Recall that by the scaling property $\widetilde{S}_\tau^{m, c} = c S_{\tau/c}$ one has to replace $m$ and $\tau$ 
in the formulas in \cite{FHL23} by $mc$ and $\tau/c$, respectively.
  Hence, one gets with \cite[equation~(6.3)]{FHL23}
  \begin{equation*}
    \begin{split}
      \| A_{0, \tau, 0}^{m, c} f &\|_{L^2(\mathbb{R}^2; \mathbb{C}^2)}^2 - \left( m c^2 \frac{\tau^2 - 4 c^2}{\tau^2 + 4 c^2} \right)^2 \|f \|_{L^2(\mathbb{R}^2; \mathbb{C}^2)}^2 \\
      &= \| \widetilde{S}_\tau^{m, c} f \|_{L^2(\mathbb{R}^2; \mathbb{C}^2)}^2 - \left( m c^2 \frac{\tau^2 - 4 c^2}{\tau^2 + 4 c^2} \right)^2 \|f \|_{L^2(\mathbb{R}^2; \mathbb{C}^2)}^2 \\
      &= c^2 \left( \| S_{\tau/c} f \|_{L^2(\mathbb{R}^2; \mathbb{C}^2)}^2 - \left( m c \frac{\tau^2 - 4 c^2}{\tau^2 + 4 c^2} \right)^2 \|f \|_{L^2(\mathbb{R}^2; \mathbb{C}^2)}^2 \right) \\
      &< c^2 \sum_{n=1}^N |c_n|^2 \bigg\{ \tan(\omega) \left[ 3 + \frac{(4 c^2 + \tau^2)^2 + 16 c^2 \tau^2}{(4 c^2 - \tau^2)^2} \right] (2 N^2 \pi^2 + m^2 c^2 L^2) \\
      &\qquad \qquad \qquad \qquad  + \frac{4 m c^2 L \tau}{4 c^2 + \tau^2} - \frac{N^2 \pi^2 [(4 c^2 + \tau^2)^2 + 16 \tau^2 c^2] (4 c^2 + \tau^2)}{2 m c^2 L \tau (4c^2 - \tau^2)^2} \bigg\}.
    \end{split}
  \end{equation*}
Since we consider $m>0$ and $\tau<0$, the sum of the second and the third term in the curly brackets is negative, when $L$ is sufficiently large. For these values of $L$, one can choose $\omega_\star = \omega_\star(L)$ such that the right hand side of the last displayed formula is negative for any $\omega < \omega_\star$. As the space of all $f$'s has dimension $N$, we find with the min-max principle that $(A_{0, \tau, 0}^{m, c})^2$ has at least $N$ discrete eigenvalues below the bottom of its essential spectrum $\big( m c^2 \frac{\tau^2 - 4 c^2}{\tau^2 + 4 c^2} \big)^2$; cf. Corollary~\ref{corollary_Lorentz_scalar}. Therefore, by the spectral theorem, $A_{0, \tau, 0}^{m, c}$ has at least $N$ discrete eigenvalues in the gap of its essential spectrum. This finishes the proof of this proposition.
\end{proof}

\subsection*{Acknowledgements} 
J. Behrndt and M. Holzmann gratefully acknowledge financial support by the Austrian Science Fund (FWF): P 33568-N. P.~Exner and M.~Tu\v{s}ek were supported by the grant No.~21-07129S of the Czech Science Foundation (GA\v{C}R). J.~Behrndt, M.~Holzmann, and M.~Tu\v{s}ek thank the Ministry of Education, Youth and Sports of the Czech Republic and the Austrian Agency for International Cooperation in Education and Research (OeAD) for support within the project 8J23AT025 and CZ 16/2023, respectively. This publication is based upon work from COST Action CA 18232 MAT-DYN-NET, supported by COST (European Cooperation in Science and Technology), www.cost.eu.

\begin{appendix}
\section{Boundary triples} \label{appendix_bt}

In this section we briefly recall some notions related to the theory of (ordinary) boundary triples. Boundary triples are an abstract tool in the extension and spectral theory of symmetric and self-adjoint operators and they are particularly useful to handle boundary value and transmission problems. Here, we only state the definitions and results needed in the main part of the paper, for proofs and further results we refer to \cite{BHS19, BGP, DM91, DM95}.

Throughout this section $\mathcal{H}$ is a complex Hilbert space and $S$ is a densely defined closed symmetric operator in $\mathcal{H}$.

\begin{definition} \label{definition_bt}
  Let $\mathcal{G}$ be a complex Hilbert space and $\Gamma_0, \Gamma_1: \dom S^* \rightarrow \mathcal{G}$ be linear mappings. Then $\{ \mathcal{G}, \Gamma_0, \Gamma_1 \}$ is called a boundary triple for $S^*$, if the following conditions are fulfilled:
  \begin{itemize}
    \item[(i)] For all $f, g \in \dom S^*$ the abstract Green's identity
    \begin{equation*}
      (S^* f, g)_\mathcal{H} - (f, S^* g)_\mathcal{H} = (\Gamma_1 f, \Gamma_0 g)_\mathcal{G} - (\Gamma_0 f, \Gamma_1 g)_\mathcal{G}
    \end{equation*}
    holds.
    \item[(ii)] The map $(\Gamma_0, \Gamma_1): \dom S^* \rightarrow \mathcal{G} \times \mathcal{G}$ is surjective.
  \end{itemize}
\end{definition}

We note that a boundary triple for $S^*$ exists if and only if $S$ admits self-adjoint extensions. In the following we always  assume that this is the case and that $\{ \mathcal{G}, \Gamma_0, \Gamma_1 \}$ is a boundary triple associated with $S^*$. Then, the extension $B_0 := S^* \upharpoonright \ker \Gamma_0$ of $S$ is self-adjoint and one has 
the direct sum decomposition
\begin{equation*}
  \dom S^* = \dom B_0 \dot{+} \ker (S^* - z) = \ker \Gamma_0 \dot{+} \ker(S^*-z), \quad z \in \rho(B_0).
\end{equation*}
Hence, the following mappings are well defined: The $\gamma$-field
\begin{equation*}
  \rho(B_0) \ni z \mapsto \gamma(z) := \big(\Gamma_0 \upharpoonright \ker (S^*-z) \big)^{-1},
\end{equation*}
whose values $\gamma(z)$, $z \in \rho(B_0)$, are bounded and everywhere defined operators from $\mathcal{G}$ to $\mathcal{H}$, and the Weyl function
\begin{equation*}
  \rho(B_0) \ni z \mapsto M(z) := \Gamma_1 \big(\Gamma_0 \upharpoonright \ker (S^*-z) \big)^{-1},
\end{equation*}
whose values $M(z)$, $z \in \rho(B_0)$, are bounded and everywhere defined operators in $\mathcal{G}$. We remark that 
\begin{equation} \label{gamma_star}
  \gamma(z)^* = \Gamma_1 (B_0 - \overline{z})^{-1},\quad z \in \rho(B_0).
\end{equation}

In the following we discuss how the self-adjointness of certain extensions of $S$ can be shown and how their spectral properties can be analyzed with the help of a boundary triple. Let $\Theta$ be a linear operator in $\mathcal{G}$. Then we define in $\mathcal{H}$ the linear operator $B_{\Theta}$ as the restriction of $S^*$ onto
\begin{equation} \label{def_B_Theta}
  \dom B_{\Theta} := \big\{ f \in \dom S^*: \Gamma_1 f = \Theta \Gamma_0 f \big\}.
\end{equation}
Note that in the above definition the equation $\Gamma_1 f = \Theta \Gamma_0 f$ contains the condition $\Gamma_0 f \in \dom \Theta$. 
We remark that for the description of all closed extensions of $S$ via \eqref{def_B_Theta} it is necessary to extend the class of parameters $\Theta$ to the class of closed linear relations in $\mathcal G$; cf. \cite[Chapter 2.2]{BHS19}.
The following theorem shows that several properties of $B_{\Theta}$ are encoded in the parameter $\Theta$.

\begin{thm} \label{theorem_bt}
  Let $\{ \mathcal{G}, \Gamma_0, \Gamma_1 \}$ be a boundary triple for $S^*$ with associated $\gamma$-field $\gamma$ and Weyl function $M$. Let $\Theta$ be a linear operator in $\mathcal{G}$ and let $B_\Theta$ be defined by~\eqref{def_B_Theta}. Then $B_{\Theta}$ is self-adjoint in $\mathcal{H}$ if and only if $\Theta$ is self-adjoint in $\mathcal{G}$. If $\Theta$ is self-adjoint, then the following holds for all $z \in \rho(B_0)$:
  \begin{itemize}
    \item[(i)] $z \in \sigma(B_{\Theta})$ if and only if $0 \in \sigma(\Theta - M(z) )$.
    \item[(ii)] $z \in \sigma_\textup{p}(B_{\Theta})$ if and only if $0 \in \sigma_\textup{p}(\Theta - M(z) )$.
    \item[(iii)] $z \in \sigma_\textup{ess}(B_{\Theta})$ if and only if $0 \in \sigma_\textup{ess}(\Theta - M(z) )$.
    \item[(iv)] For $z \in \rho(B_{\Theta})$ one has
    \begin{equation*}
      (B_{\Theta} - z)^{-1} = (B_0 - z)^{-1} + \gamma(z) \big( \Theta - M(z) \big)^{-1} \gamma(\overline{z})^*.
    \end{equation*}
  \end{itemize}
\end{thm}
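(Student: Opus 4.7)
The plan is to treat the self-adjointness equivalence first, then derive the Krein resolvent formula in item~(iv), and finally read off the spectral correspondences (i)--(iii) from that resolvent formula together with a direct eigenvalue calculation for (ii).

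\textbf{Self-adjointness.} The first step is to observe that if $f,g\in\dom B_\Theta$, then $\Gamma_1 f=\Theta\Gamma_0 f$ and $\Gamma_1 g=\Theta\Gamma_0 g$, so the abstract Green's identity in Definition~\ref{definition_bt}~(i) yields
\begin{equation*}
(B_\Theta f,g)_\mathcal H-(f,B_\Theta g)_\mathcal H=(\Theta\Gamma_0 f,\Gamma_0 g)_\mathcal G-(\Gamma_0 f,\Theta\Gamma_0 g)_\mathcal G.
\end{equation*}
Thus $B_\Theta$ is symmetric iff $\Theta$ is symmetric on its domain. The harder direction (and this is the main technical point of the whole theorem) is maximality: given $\Theta$ self-adjoint, one must show $\ran(B_\Theta-z)=\mathcal H$ for some nonreal $z$. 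I would obtain this simultaneously with item~(iv) by verifying the Krein formula directly, because $z\in\rho(B_0)\cap\mathbb C\setminus\mathbb R$ lies in $\rho(\Theta-M(z))$ (since $\Theta$ self-adjoint and $\mathrm{Im}\,M(z)$ is positive/negative definite for $\mathrm{Im}\,z\gtrless 0$), so the right-hand side of~(iv) provides an everywhere defined right inverse of $B_\Theta-z$. The converse implication, that $\Theta$ is self-adjoint when $B_\Theta$ is, follows from the surjectivity of $(\Gamma_0,\Gamma_1)$ in Definition~\ref{definition_bt}~(ii), which lets one lift any candidate defect element in $\mathcal G$ to a defect element of $B_\Theta$ in $\mathcal H$ and conclude it must vanish.

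\textbf{Krein formula (iv).} Fix $z\in\rho(B_0)\cap\rho(B_\Theta)$ and $f\in\mathcal H$. Using the decomposition $\dom S^*=\dom B_0\dot+\ker(S^*-z)$, I look for the resolvent in the ansatz form
\begin{equation*}
u:=(B_0-z)^{-1}f+\gamma(z)\varphi,\qquad\varphi\in\mathcal G.
\end{equation*}
Since $\gamma(z)\varphi\in\ker(S^*-z)$, automatically $(S^*-z)u=f$. The condition $u\in\dom B_\Theta$ reduces to $\Gamma_1 u=\Theta\Gamma_0 u$. Using $\Gamma_0(B_0-z)^{-1}f=0$ and $\Gamma_0\gamma(z)=I_\mathcal G$, I get $\Gamma_0 u=\varphi$, and with the identity $\gamma(\overline z)^*=\Gamma_1(B_0-z)^{-1}$ from~\eqref{gamma_star} together with $\Gamma_1\gamma(z)=M(z)$, I get $\Gamma_1 u=\gamma(\overline z)^*f+M(z)\varphi$. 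Therefore $u\in\dom B_\Theta$ iff $(\Theta-M(z))\varphi=\gamma(\overline z)^*f$, which, under the standing assumption $z\in\rho(B_\Theta)$ (equivalently $0\in\rho(\Theta-M(z))$), has the unique solution $\varphi=(\Theta-M(z))^{-1}\gamma(\overline z)^*f$. Inserting this back yields the Krein formula. The main obstacle here is the interplay between the resolvent sets of $B_\Theta$ and of $\Theta-M(z)$, which has to be established in parallel; it is a consequence of the same ansatz argument, run in both directions, and ultimately comes down to the injectivity statement in item~(ii).

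\textbf{Point spectrum (ii).} If $u\in\ker(B_\Theta-z)$ with $u\neq 0$, then $u\in\ker(S^*-z)$, so $u=\gamma(z)\Gamma_0 u$; for $z\in\rho(B_0)$ one has $\ker(S^*-z)\cap\dom B_0=\{0\}$, whence $\varphi:=\Gamma_0 u\neq 0$. The boundary condition becomes $M(z)\varphi=\Theta\varphi$, that is, $0\in\sigma_\textup{p}(\Theta-M(z))$. Conversely, any nontrivial $\varphi\in\ker(\Theta-M(z))$ produces an eigenfunction $u=\gamma(z)\varphi$ of $B_\Theta$ at $z$.

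\textbf{Items (i) and (iii).} These are now corollaries of the Krein formula and of (ii). For (i) I would argue: if $0\in\rho(\Theta-M(z))$ then the formula in~(iv) provides a bounded inverse of $B_\Theta-z$, hence $z\in\rho(B_\Theta)$; conversely, if $z\in\rho(B_\Theta)$ the formula can be solved for $(\Theta-M(z))^{-1}=$ (a bounded combination of resolvent differences), giving $0\in\rho(\Theta-M(z))$. Item (iii) is obtained by combining (i) and (ii) with the fact that the resolvent difference $(B_\Theta-z)^{-1}-(B_0-z)^{-1}=\gamma(z)(\Theta-M(z))^{-1}\gamma(\overline z)^*$ factors through $\mathcal G$, which implies via a Weyl-sequence/factorization argument that a point is in $\sigma_\textup{ess}(B_\Theta)\setminus\sigma_\textup{ess}(B_0)$ iff $0\in\sigma_\textup{ess}(\Theta-M(z))$; one then handles points of $\sigma_\textup{ess}(B_0)$ separately by noting that eigenvalues of infinite multiplicity and accumulation points transfer between $B_\Theta-z$ and $\Theta-M(z)$ under the same factorization.
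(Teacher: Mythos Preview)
The paper does not actually prove Theorem~\ref{theorem_bt}; it is stated in Appendix~\ref{appendix_bt} as a known result from the general theory of ordinary boundary triples, with the explicit remark that ``for proofs and further results we refer to \cite{BHS19, BGP, DM91, DM95}.'' Your sketch follows precisely the standard route taken in those references (ansatz $u=(B_0-z)^{-1}f+\gamma(z)\varphi$, reduction of the boundary condition to $(\Theta-M(z))\varphi=\gamma(\overline z)^*f$, and reading off (i)--(iii) from the resulting Krein formula), so in spirit it agrees with what the cited literature does.

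One point where your sketch is genuinely thin is item~(iii). The phrase ``factors through $\mathcal G$, which implies via a Weyl-sequence/factorization argument'' is not a proof: in the general setting $\mathcal G$ need not be finite-dimensional, so the resolvent difference need not be compact, and a naive Weyl-sequence transfer does not go through. The standard argument (as in \cite{DM91,BHS19}) is instead to use the bijective correspondence $\ker(B_\Theta-z)\leftrightarrow\ker(\Theta-M(z))$ from~(ii) together with the fact that the Krein formula also gives a bijection between closed complements, so that $B_\Theta-z$ is Fredholm of index zero if and only if $\Theta-M(z)$ is; this is what yields the essential-spectrum equivalence. Your final sentence about ``handling points of $\sigma_\textup{ess}(B_0)$ separately'' is also misplaced, since the statement only concerns $z\in\rho(B_0)$.
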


Finally, we shortly recall an efficient way how a boundary triple can be constructed, see \cite{P04, P08} or also \cite[Section~1.4.2]{BGP}. Let $B$ be a self-adjoint operator in a Hilbert space $\mathcal{H}$, $\mathcal{G}$ be another Hilbert space, and assume that $\mathcal{T}: \dom B \rightarrow \mathcal{G}$ is surjective, continuous (when $\dom B$ is endowed with the graph norm of $B$), and that $\ker \mathcal{T}$ is dense in $\mathcal{H}$. Then, $S := B \upharpoonright \ker \mathcal{T}$ is a densely defined closed symmetric operator in $\mathcal{H}$. 

To proceed, define for $z \in \rho(B)$ the operator
\begin{equation} \label{def_G_z_bt}
  \mathcal{G}_z := \big( \mathcal{T} (B - \overline{z})^{-1} \big)^*.
\end{equation}
It can be shown that $\mathcal{G}_z$ is bounded and injective. Moreover, one has $\ran \mathcal{G}_z  = \ker (S^* - z)$ and that for any $z \in \rho(B)$ the direct sum decomposition 
\begin{equation*}
  \dom S^* = \dom B \dot{+} \ker (S^* - z) = \dom B \dot{+} \ran \mathcal{G}_z
\end{equation*}
holds. Hence, for any $f \in \dom S^*$ there exist $f_z \in \dom B$ and $\xi \in \mathcal{G}$ such that $f = f_z + \mathcal{G}_z \xi$; it can be shown that $\xi$ is independent of the choice of $z \in \rho(B)$. Then, the following result holds:

\begin{prop} \label{proposition_boundary_triple_singular_perturbation}
  Let $\zeta \in \rho(B)$ be fixed and define the mappings $\Gamma_0, \Gamma_1: \dom S^* \rightarrow \mathcal{G}$ acting on $f = f_\zeta + \mathcal{G}_\zeta \xi = f_{\overline{\zeta}} + \mathcal{G}_{\overline{\zeta}} \xi$ with $f_\zeta, f_{\overline{\zeta}} \in \dom B$, $\xi \in \mathcal{G}$, as
  \begin{equation*}
    \Gamma_0 f = \xi \quad \text{and} \quad \Gamma_1 f = \frac{1}{2} \mathcal{T} (f_\zeta + f_{\overline{\zeta}}).
  \end{equation*}
  Then, $\{ \mathcal{G}, \Gamma_0, \Gamma_1 \}$ is a boundary triple for $S^*$. Moreover, the values of the $\gamma$-field and Weyl function are given by $\gamma(z) = \mathcal{G}_z$ and $M(z) = \mathcal{T}(\mathcal{G}_z - \frac{1}{2} (\mathcal{G}_\zeta + \mathcal{G}_{\overline{\zeta}}))$, $z \in \rho(B)$.
\end{prop}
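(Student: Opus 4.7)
The plan is to verify the two defining properties of a boundary triple and then read off the $\gamma$-field and Weyl function from the definitions of $\Gamma_0, \Gamma_1$. The direct sum decompositions stated just before the proposition give, for any $f \in \dom S^*$, unique $f_\zeta, f_{\overline{\zeta}} \in \dom B$ and a single $\xi \in \mathcal{G}$ with $f = f_\zeta + \mathcal{G}_\zeta \xi = f_{\overline{\zeta}} + \mathcal{G}_{\overline{\zeta}} \xi$; in particular $(\mathcal{G}_\zeta - \mathcal{G}_{\overline{\zeta}})\xi = f_{\overline{\zeta}} - f_\zeta \in \dom B$, so $\mathcal{T}(\mathcal{G}_\zeta - \mathcal{G}_{\overline{\zeta}})\xi$ makes sense. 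This immediately gives well-definedness of both $\Gamma_0 f = \xi$ and $\Gamma_1 f = \tfrac{1}{2}\mathcal{T}(f_\zeta + f_{\overline{\zeta}})$.

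For the abstract Green's identity I would use $S^* f = B f_\zeta + \zeta\, \mathcal{G}_\zeta \xi_f$ (since $S^*$ extends $B$ and $\ran\mathcal{G}_\zeta \subset \ker(S^*-\zeta)$) and expand $(S^*f, g)_\mathcal{H} - (f, S^*g)_\mathcal{H}$ with the $\zeta$-decompositions of $f$ and $g$. The term $(Bf_\zeta, g_\zeta) - (f_\zeta, Bg_\zeta)$ drops out by self-adjointness of $B$, while the mixed terms simplify through $\mathcal{G}_\zeta = (\mathcal{T}(B-\overline{\zeta})^{-1})^*$ and the identity $(B-\overline{\zeta})^{-1}B = I + \overline{\zeta}(B-\overline{\zeta})^{-1}$, yielding
\[
(S^*f, g)_\mathcal{H} - (f, S^*g)_\mathcal{H} = (\mathcal{T}f_\zeta, \xi_g)_\mathcal{G} - (\xi_f, \mathcal{T}g_\zeta)_\mathcal{G} + (\zeta - \overline{\zeta})(\mathcal{G}_\zeta \xi_f, \mathcal{G}_\zeta \xi_g)_\mathcal{H}.
\]
To match $(\Gamma_1 f, \Gamma_0 g)_\mathcal{G} - (\Gamma_0 f, \Gamma_1 g)_\mathcal{G}$, I rewrite $\tfrac{1}{2}\mathcal{T}(f_\zeta + f_{\overline{\zeta}}) - \mathcal{T}f_\zeta = \tfrac{1}{2}\mathcal{T}(\mathcal{G}_\zeta - \mathcal{G}_{\overline{\zeta}})\xi_f$ and use the key identity $\mathcal{T}(\mathcal{G}_\zeta - \mathcal{G}_{\overline{\zeta}}) = (\zeta - \overline{\zeta})\,\mathcal{G}_\zeta^* \mathcal{G}_\zeta$; this is derived by applying $(B-\zeta)$ to $(\mathcal{G}_\zeta - \mathcal{G}_{\overline{\zeta}})\xi \in \dom B$, which equals $(\zeta - \overline{\zeta})\mathcal{G}_{\overline{\zeta}}\xi$, and then using $\mathcal{T}(B-\zeta)^{-1} = \mathcal{G}_{\overline{\zeta}}^*$ together with the commutation $(B-\zeta)^{-1}\mathcal{G}_{\overline{\zeta}} = (B-\overline{\zeta})^{-1}\mathcal{G}_\zeta$. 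Self-adjointness of $\mathcal{G}_\zeta^*\mathcal{G}_\zeta$ then exactly absorbs the residual $(\zeta - \overline{\zeta})$-term.

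Surjectivity of $(\Gamma_0, \Gamma_1)\colon \dom S^* \to \mathcal{G} \times \mathcal{G}$ is then straightforward: given $(\xi_0, \xi_1) \in \mathcal{G} \times \mathcal{G}$, the surjectivity of $\mathcal{T}$ lets me choose $u \in \dom B$ with $\mathcal{T} u = \xi_1 - \tfrac{1}{2}\mathcal{T}(\mathcal{G}_\zeta - \mathcal{G}_{\overline{\zeta}})\xi_0$, and setting $f := u + \mathcal{G}_\zeta \xi_0$ one reads off $\Gamma_0 f = \xi_0$ and, with $f_\zeta = u$ and $f_{\overline{\zeta}} = u + (\mathcal{G}_\zeta - \mathcal{G}_{\overline{\zeta}})\xi_0$, also $\Gamma_1 f = \xi_1$. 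For the $\gamma$-field and Weyl function, take $\xi \in \mathcal{G}$; the element $\mathcal{G}_z \xi \in \ker(S^*-z)$ admits the decompositions $\mathcal{G}_z \xi = (\mathcal{G}_z - \mathcal{G}_\zeta)\xi + \mathcal{G}_\zeta \xi = (\mathcal{G}_z - \mathcal{G}_{\overline{\zeta}})\xi + \mathcal{G}_{\overline{\zeta}}\xi$, so $\Gamma_0 \mathcal{G}_z\xi = \xi$ gives $\gamma(z) = \mathcal{G}_z$, and $\Gamma_1 \mathcal{G}_z \xi = \tfrac{1}{2}\mathcal{T}\bigl[(\mathcal{G}_z - \mathcal{G}_\zeta) + (\mathcal{G}_z - \mathcal{G}_{\overline{\zeta}})\bigr]\xi = \mathcal{T}\bigl(\mathcal{G}_z - \tfrac{1}{2}(\mathcal{G}_\zeta + \mathcal{G}_{\overline{\zeta}})\bigr)\xi$, matching the claimed formula for $M(z)$.

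The main obstacle is the resolvent-type identity $\mathcal{T}(\mathcal{G}_\zeta - \mathcal{G}_{\overline{\zeta}}) = (\zeta - \overline{\zeta})\,\mathcal{G}_\zeta^*\mathcal{G}_\zeta$ driving the Green's identity step: it is exactly what makes the symmetric choice $\Gamma_1 = \tfrac{1}{2}\mathcal{T}(\cdot_\zeta + \cdot_{\overline{\zeta}})$ (rather than, say, $\mathcal{T}f_\zeta$) compatible with the Hilbert-space duality in the boundary form, and once it is in hand every other step reduces to bookkeeping on the direct sum decompositions and to unwinding the definition $\mathcal{G}_z = (\mathcal{T}(B-\overline{z})^{-1})^*$.
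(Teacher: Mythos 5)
Your proof is correct. Note, however, that the paper itself does not prove Proposition \ref{proposition_boundary_triple_singular_perturbation}: it is recalled from the literature (Posilicano's construction, \cite{P04, P08}, and \cite[Section~1.4.2]{BGP}), so there is no in-paper argument to compare against. Your verification is essentially the standard one from those references: expand the boundary form using the $\zeta$-decompositions, use $\mathcal{G}_\zeta^* = \mathcal{T}(B-\overline{\zeta})^{-1}$ to convert the mixed terms into $(\mathcal{T}f_\zeta,\xi_g)_\mathcal{G} - (\xi_f,\mathcal{T}g_\zeta)_\mathcal{G}$, and absorb the remaining $(\zeta-\overline{\zeta})(\mathcal{G}_\zeta\xi_f,\mathcal{G}_\zeta\xi_g)_\mathcal{H}$ term via the identity $\mathcal{T}(\mathcal{G}_\zeta - \mathcal{G}_{\overline{\zeta}}) = (\zeta-\overline{\zeta})\,\mathcal{G}_\zeta^*\mathcal{G}_\zeta$, which indeed follows from $\mathcal{G}_\zeta - \mathcal{G}_{\overline{\zeta}} = (\zeta-\overline{\zeta})(B-\zeta)^{-1}\mathcal{G}_{\overline{\zeta}} = (\zeta-\overline{\zeta})(B-\overline{\zeta})^{-1}\mathcal{G}_\zeta$; the self-adjointness of $\mathcal{G}_\zeta^*\mathcal{G}_\zeta$ is exactly what makes the symmetrized choice of $\Gamma_1$ work, as you say. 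The surjectivity argument and the identification of $\gamma(z)$ and $M(z)$ via the decompositions $\mathcal{G}_z\xi = (\mathcal{G}_z-\mathcal{G}_\zeta)\xi + \mathcal{G}_\zeta\xi$ are also fine. One small point you leave implicit: to speak of the $\gamma$-field and Weyl function on $\rho(B)$ in the sense of Definition \ref{definition_bt} and the text following it, you should record that $\ker\Gamma_0 = \dom B$, i.e.\ the distinguished self-adjoint extension $B_0 = S^*\upharpoonright\ker\Gamma_0$ coincides with $B$; this is immediate from $\Gamma_0 f = \xi$ and the directness of the decomposition $\dom S^* = \dom B \dot{+} \ran\mathcal{G}_\zeta$, but it is the step that legitimizes evaluating $\gamma$ and $M$ at $z\in\rho(B)$.
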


\section{Proof of Proposition~\ref{proposition_C_z_Gamma}} \label{appendix_int_op}

First, we provide an abstract result about the compactness of integral operators under suitable assumptions on the integral kernel that will be used several times in the proof of Proposition~\ref{proposition_C_z_Gamma}.

\begin{lem} \label{lemma_int_op_compact}
  Let $k: \mathbb{R}^2 \rightarrow \mathbb{C}$ be continuous in $\mathbb{R}^2$ and differentiable on the set $\{ (s, t) \in \mathbb{R}^2: s \neq t \}$. Assume that there exist constants $R, C, \kappa > 0$ and $\alpha \in [0, 1)$ such that for $p \in \{ 0, 1\}$ and all $(s, t) \in \mathbb{R}^2$ with $s \neq t$
  \begin{equation} \label{assumption_k}
    \max \left\{ \left|\frac{\partial^p}{\partial s^p} k(s, t)\right|, \left|\frac{\partial^p}{\partial s^p} \overline{k(t, s)}\right| \right\} \leq C \begin{cases} |s - t|^{-\alpha}, & |s|, |t| \leq 3 R, \\ e^{-\kappa |s - t|}, & |s - t| > 2 R, \\ 0, & s, t > R \text{ or } s, t < -R, \end{cases}
  \end{equation}
  holds. Then, for any $r \in [-1, 0]$ the mapping 
  \begin{equation*}
    \mathcal{K} u(s) = \int_\mathbb{R} k(s,t) u(t) dt, \quad u \in C_0^\infty(\mathbb{R}; \mathbb{C}), ~ s \in \mathbb{R},
  \end{equation*}
  can be extended to a compact operator $\mathcal{K}: H^{r}(\mathbb{R}; \mathbb{C}) \rightarrow H^{r+1}(\mathbb{R}; \mathbb{C})$.
\end{lem}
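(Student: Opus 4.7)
My plan is to establish compactness at $r=0$, extend it to $r=-1$ by duality, and interpolate for $r\in(-1,0)$. The first step is a kernel splitting: I would fix $\phi \in C_c^\infty(\mathbb{R})$ with $\phi \equiv 1$ on $[-3R,3R]$ and $\operatorname{supp}\phi \subset [-4R,4R]$, and write
\begin{equation*}
  k(s,t) = k_1(s,t) + k_2(s,t), \qquad k_1(s,t) := \phi(s)\phi(t) k(s,t),
\end{equation*}
so that $k_1$ is supported in $[-4R,4R]^2$ and $k_2$ vanishes on $[-3R,3R]^2$.

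\textbf{Treatment of the tail.} On $\operatorname{supp} k_2$, hypothesis~(iii) excludes the quadrants $\{s,t>R\}$ and $\{s,t<-R\}$, and a quick case distinction shows that $k_2$ is automatically supported in $\{|s-t|>2R\}$ (e.g.\ if $s>3R$ and $k_2(s,t)\neq 0$ then $t\le R$, hence $s-t>2R$). The exponential bound~(ii) therefore gives $|k_2(s,t)|, |\partial_s k_2(s,t)| \le C' e^{-\kappa|s-t|}$ on four regions (such as $\{s>3R,\,t\le R\}$) where $e^{-\kappa|s-t|}$ additionally enforces decay in at least one variable; a direct integration then yields $k_2,\partial_s k_2 \in L^2(\mathbb{R}^2)$. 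Consequently the integral operators with kernels $k_2$ and $\partial_s k_2$ are Hilbert-Schmidt on $L^2(\mathbb{R})$, so $\mathcal{K}_2 : L^2(\mathbb{R}) \to H^1(\mathbb{R})$ is compact.

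\textbf{Treatment of the core.} The kernel $k_1$ is compactly supported with the weakly singular bound $|k_1(s,t)| \le C|s-t|^{-\alpha}$, $\alpha\in[0,1)$, and the product rule $\partial_s k_1 = \phi'(s)\phi(t)k + \phi(s)\phi(t)\partial_s k$ shows that $\partial_s k_1$ inherits the same structure (the first term is even bounded, because on $|s|\in(3R,4R)$ the support of $k$ is confined to $|s-t|>2R$, where~(ii) applies). I will then invoke the classical compactness of weakly singular integral operators on bounded intervals: truncating $k_1^{(\varepsilon)} := k_1 \mathbf{1}_{|s-t|>\varepsilon}$ produces a Hilbert-Schmidt operator, while the Schur-test estimate
\begin{equation*}
  \sup_s \int_{-4R}^{4R} \bigl|k_1(s,t)-k_1^{(\varepsilon)}(s,t)\bigr|\, dt \le \frac{2C\,\varepsilon^{\,1-\alpha}}{1-\alpha} \longrightarrow 0 \quad\text{as } \varepsilon \to 0^+
\end{equation*}
(together with its analogue in $t$) shows that $\mathcal{K}_1$ and $\partial_s \mathcal{K}_1$ are compact on $L^2(\mathbb{R})$, hence $\mathcal{K}_1 : L^2(\mathbb{R}) \to H^1(\mathbb{R})$ is compact.

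\textbf{Closing the argument and main obstacle.} Summing gives the case $r=0$. The hypothesis~\eqref{assumption_k} is symmetric in $k(s,t)$ and $\overline{k(t,s)}$, so the same reasoning applied to the adjoint kernel produces $\mathcal{K}^*: L^2(\mathbb{R}) \to H^1(\mathbb{R})$ compact, which by $L^2$-duality yields $\mathcal{K}: H^{-1}(\mathbb{R}) \to L^2(\mathbb{R})$ compact. An interpolation theorem preserving compactness (Lions--Peetre, or Cwikel) then covers every $r\in[-1,0]$. The main anticipated obstacle is the $L^2(\mathbb{R}^2)$-integrability of $k_2$: a translation-invariant bound like $e^{-\kappa|s-t|}$ alone is \emph{not} square-integrable on $\mathbb{R}^2$, so it is essential that the vanishing hypothesis~(iii) amputates the diagonal stripe at infinity, thereby converting the decay in $|s-t|$ into genuine decay in $|s|+|t|$ on each piece of $\operatorname{supp} k_2$.
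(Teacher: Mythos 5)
Your proposal is correct in substance and follows the same three-stage architecture as the paper's proof: settle the case $r=0$ by showing that both the operator with kernel $k$ and the operator with kernel $\partial_s k$ are compact on $L^2(\mathbb{R};\mathbb{C})$, obtain $r=-1$ by running the same argument for the adjoint kernel $\overline{k(t,s)}$ (this is exactly why the hypothesis is symmetric in $k(s,t)$ and $\overline{k(t,s)}$) and dualizing, and then interpolate with a compactness-preserving interpolation theorem (the paper cites Cwikel--Kalton, as you anticipate). The difference is local: the paper shows directly, by a region-by-region estimate exploiting the vanishing on the quadrants $\{s,t>R\}$, $\{s,t<-R\}$ and the exponential off-diagonal decay, that $k$ and $\partial_s k$ belong to $L^2(\mathbb{R}^2)$, so both kernel operators are Hilbert--Schmidt; you instead cut off a compactly supported core and treat its diagonal singularity by truncation plus a Schur-test limit, keeping Hilbert--Schmidt only for the tail. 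Your variant is in fact slightly more robust: a pointwise bound $|s-t|^{-\alpha}$ is square integrable across the diagonal only for $\alpha<\tfrac12$, while your truncation argument covers the full range $\alpha\in[0,1)$ stated in the lemma (in the paper's application the singularity is merely logarithmic, so this distinction is harmless there). Your observation that the support condition converts decay in $|s-t|$ into decay in $|s|+|t|$ on the tail is precisely the mechanism the paper uses as well.

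The one step you pass over silently is the identification of the weak derivative, $\frac{d}{ds}\,\mathcal{K}u(s)=\int_{\mathbb{R}}\partial_s k(s,t)\,u(t)\,dt$, which is what allows you to deduce $\mathcal{K}u\in H^1(\mathbb{R};\mathbb{C})$ from the compactness of the two kernel operators. Since $\partial_s k$ is singular on the diagonal, this is not a routine differentiation under the integral sign: the paper devotes its Step~2 to it, integrating against a test function, integrating by parts on $\mathbb{R}\setminus(t-\varepsilon,t+\varepsilon)$, and using the continuity of $k$ \emph{across} the diagonal so that the boundary terms cancel as $\varepsilon\searrow 0$. This is exactly where the continuity hypothesis is consumed; if $k$ had a jump across $s=t$, the derivative would acquire an extra multiplication term and the gain of one Sobolev order (hence compactness into $H^{r+1}$) would fail. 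Supplying this verification makes your argument complete.
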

\begin{proof}
  Define for $p \in \{ 0, 1 \}$ the function $k_p(s, t) := \frac{\partial^p}{\partial s^p} k(s,t)$, $s \neq t$, and the integral operator $\mathcal{K}_p$
  \begin{equation*}
    \mathcal{K}_p: L^2(\mathbb{R}; \mathbb{C}) \rightarrow L^2(\mathbb{R}; \mathbb{C}), \qquad \mathcal{K}_p u(s) := \int_\mathbb{R} k_p(s, t) u(t) d t.
  \end{equation*}
  The proof is split into several steps: In \textit{Step~1} we show that $k_p \in L^2(\mathbb{R}^2; \mathbb{C})$, which implies that $\mathcal{K}_p$ is a Hilbert-Schmidt operator in $L^2(\mathbb{R}; \mathbb{C})$ and hence, it is well-defined and compact. In \textit{Step~2} we verify that for any $u \in L^2(\mathbb{R}; \mathbb{C})$ the function $\mathcal{K} u = \mathcal{K}_0 u$ is weakly differentiable and that $\frac{d}{d s} \mathcal{K}_0 u = \mathcal{K}_1 u$. Using this, it is proved in \textit{Step~3} that $\mathcal{K}$ gives rise to a compact operator from $L^2(\mathbb{R}; \mathbb{C})$ to $H^1(\mathbb{R}; \mathbb{C})$. Then, in \textit{Step~4} it is argued that $\mathcal{K}$ admits a compact extension from $H^{-1}(\mathbb{R}; \mathbb{C})$ to $L^2(\mathbb{R}; \mathbb{C})$. Finally, in \textit{Step~5} an interpolation argument is used to deduce that also for any $r \in (-1, 0)$ the map $\mathcal{K}$ has a compact extension from $H^r(\mathbb{R}; \mathbb{C})$ to $H^{r+1}(\mathbb{R}; \mathbb{C})$.

  \textit{Step~1}.
  We claim that $k_{p} \in L^2(\mathbb{R}^2; \mathbb{C})$. For two intervals $I_1$ and $I_2$ set 
  \begin{equation*}
    \mathcal{I}(I_1, I_2) := \int_{I_1 \times I_2} |k_{p}(s,t)|^2 ds dt.
  \end{equation*}
  Then 
  \begin{equation} \label{decomposition_I_2}
    \| k_p \|_{L^2(\mathbb{R}^2; \mathbb{C})}^2 = \mathcal{I}((-\infty, -R], \mathbb{R}) + \mathcal{I}((-R, R), \mathbb{R}) + \mathcal{I}([R, +\infty), \mathbb{R}).
  \end{equation}
  We further decompose 
  \begin{equation} \label{decomposition_I_1}
    \begin{split}
      \mathcal{I}((-\infty, -R], \mathbb{R}) &= \mathcal{I}((-\infty, -R], (-\infty, -R]) + \mathcal{I}((-\infty, -3 R], (-R, R)) \\
      &\qquad+ \mathcal{I}((-3 R, -R], (-R, R)) + \mathcal{I}((-\infty, -R], [R, +\infty)).
    \end{split}
  \end{equation}
  By~\eqref{assumption_k} we have
  \begin{equation} \label{I_j_bounded1}
    \mathcal{I}((-\infty, -R], (-\infty, -R]) = 0
  \end{equation}
  and also
  \begin{equation} \label{I_j_bounded2}
     \mathcal{I}((-\infty, -R], [R, +\infty)) \leq C^2 \int_{-\infty}^{-R} \int_R^{+\infty} e^{2 \kappa (t-s)} ds dt < +\infty.
  \end{equation}
  In the same way one finds that $\mathcal{I}((-\infty, -3 R], (-R, R)) < +\infty$.
  Eventually, taking the properties of $k$ from~\eqref{assumption_k} for $|s|, |t| \leq 3 R$ into account, 
  one gets for bounded intervals $I_1, I_2 \subset [-3R, 3R]$ that
  \begin{equation} \label{I_j_bounded3}
    \mathcal{I}(I_1, I_2) < +\infty.
  \end{equation}
  Using~\eqref{I_j_bounded1}--\eqref{I_j_bounded3} in~\eqref{decomposition_I_1} one concludes that $\mathcal{I}((-\infty, -R], \mathbb{R}) < +\infty$. In exactly the same way one gets that $\mathcal{I}([R, +\infty), \mathbb{R}) < +\infty$. Finally, employing again similar estimates as in \eqref{I_j_bounded2}--\eqref{I_j_bounded3} one finds that
  \begin{equation*} 
    \begin{split}
      \mathcal{I}((-R, R), \mathbb{R}) = \mathcal{I}((-R, R), (-\infty, -3 R]) &+ \mathcal{I}((-R, R), (-3 R, 3 R)) \\
      &+ \mathcal{I}((-R, R), [3 R, +\infty)) < +\infty.
    \end{split}
  \end{equation*}
  Hence, we get from~\eqref{decomposition_I_2} that $k_{p} \in L^2(\mathbb{R}^2; \mathbb{C})$ showing that $\mathcal{K}_{p}$ is a Hilbert-Schmidt operator and thus, compact in $L^2(\mathbb{R}; \mathbb{C})$.

  \textit{Step~2}.
  Let $u \in C_0^\infty(\mathbb{R}; \mathbb{C})$ be fixed. We claim that $\mathcal{K}_0 u \in H^1(\mathbb{R}; \mathbb{C})$ and $\frac{d}{d s} \mathcal{K}_0 u = \mathcal{K}_1 u$. Taking the result from \textit{Step~1} into account, it suffices to prove the latter identity for the weak derivative. Let $\varphi \in C_0^\infty(\mathbb{R}; \mathbb{C})$. By~\eqref{assumption_k} one has for any fixed $t \in \mathbb{R}$ that $\varphi' k(\cdot, t) \in L^1(\mathbb{R}; \mathbb{C})$. 
  Hence, we can apply Fubini's theorem and the dominated convergence theorem and get
  \begin{equation*}
    \begin{split}
      \int_\mathbb{R} \varphi'(s) \mathcal{K}_0 u(s) d s &= \int_\mathbb{R} \int_\mathbb{R} \varphi'(s) k(s, t) d s \, u(t) d t \\
      &= \int_\mathbb{R} \lim_{\varepsilon \searrow 0} \int_{\mathbb{R} \setminus (t - \varepsilon, t + \varepsilon)} \varphi'(s) k(s, t) d s \, u(t) d t.
    \end{split}
  \end{equation*}
  Next, we use integration by parts, which is allowed, as for a fixed $t$ the map $s \mapsto k(s, t)$ is differentiable in $\mathbb{R} \setminus (t - \varepsilon, t + \varepsilon)$. Using the continuity of $k$ and $\varphi$, the dominated convergence theorem, which is applicable, since for any fixed $t \in \mathbb{R}$ one has $\varphi \frac{\partial}{\partial s} k(\cdot, t) \in L^1(\mathbb{R}; \mathbb{C})$ due to~\eqref{assumption_k}, and Fubini's theorem we find that
  \begin{equation*}
    \begin{split}
      \int_\mathbb{R} \varphi'(s) \mathcal{K}_0 u(s) d s &= \int_\mathbb{R} \lim_{\varepsilon \searrow 0} \left[ - \varphi(s) k(s, t)\Big|_{t - \varepsilon}^{t + \varepsilon} - \int_{\mathbb{R} \setminus (t - \varepsilon, t + \varepsilon)} \varphi(s) \frac{\partial}{\partial s} k(s, t) d s\right] u(t) d t \\
      &= -\int_\mathbb{R} \int_{\mathbb{R}} \varphi(s) \frac{\partial}{\partial s} k(s, t) d s \, u(t) d t = -\int_\mathbb{R} \varphi(s) \mathcal{K}_1 u(s) d s.
    \end{split}
  \end{equation*}
  This shows the claim about the weak derivative of $\mathcal{K}_0 u$. Note that by density and continuity $\frac{d}{d s} \mathcal{K}_0 u = \mathcal{K}_1 u$ also holds for $u \in L^2(\mathbb{R}; \mathbb{C})$.

  \textit{Step~3}.
  We show the claim of the lemma for $r=0$, i.e. that the mapping
  \begin{equation*}
    \mathcal{K}: L^2(\mathbb{R}; \mathbb{C}) \rightarrow H^1(\mathbb{R}; \mathbb{C}), \qquad \mathcal{K} u(s) := \int_\mathbb{R} k(s, t) u(t) d t,
  \end{equation*}
  is compact. First, note that $\mathcal{K}$ is well-defined by the results in \textit{Step 1 \& 2}. Next, consider the map 
  \begin{equation*}
    \mathcal{D}: H^1(\mathbb{R}; \mathbb{C}) \rightarrow L^2(\mathbb{R}; \mathbb{C}), \qquad \mathcal{D} f = f + f'.
  \end{equation*}
  With the help of the Fourier transform it is not difficult to see that $\mathcal{D}$ is bijective. Using the results from \textit{Step 1 \& 2} we find that $\mathcal{D} \mathcal{K} = \mathcal{K}_0 + \mathcal{K}_1$ and the right hand side defines a compact operator in $L^2(\mathbb{R}; \mathbb{C})$. Therefore, $\mathcal{K} = \mathcal{D}^{-1} (\mathcal{K}_0 + \mathcal{K}_1)$ is compact as an operator from $L^2(\mathbb{R}; \mathbb{C})$ to $H^1(\mathbb{R}; \mathbb{C})$.

  \textit{Step~4}.
  We claim that $\mathcal{K}$ admits  a compact extension $\widetilde{\mathcal{K}}:H^{-1}(\mathbb{R}; \mathbb{C})\rightarrow L^2(\mathbb{R}; \mathbb{C})$. Due to the assumptions on the function $\overline{k(t, s)}$ in~\eqref{assumption_k} it follows as in \textit{Step~1--3} that the map 
  \begin{equation*}
    \mathcal{L}: L^2(\mathbb{R}; \mathbb{C}) \rightarrow H^1(\mathbb{R}; \mathbb{C}), \qquad \mathcal{L} u(s) := \int_\mathbb{R} \overline{k(t, s)} u(t) d t,
  \end{equation*}
  is compact. Hence, also the anti-dual operator
  \begin{equation*}
    \widetilde{\mathcal{K}} := \mathcal{L}': H^{-1}(\mathbb{R}; \mathbb{C}) \rightarrow L^2(\mathbb{R}; \mathbb{C})
  \end{equation*}
  is compact. We claim that $\widetilde{\mathcal{K}}$ is an extension of $\mathcal{K}$. To see this, we note that $\mathcal{L} u = \mathcal{K}_0^* u$ holds for any $u \in L^2(\mathbb{R}; \mathbb{C})$, and compute for $u, v \in L^2(\mathbb{R}; \mathbb{C})$
  \begin{equation*}
    \begin{split}
      \big( \widetilde{\mathcal{K}} u, v \big)_{L^2(\mathbb{R}; \mathbb{C})} = \big( u, \mathcal{L} v \big)_{H^{-1}(\mathbb{R}; \mathbb{C}) \times H^{1}(\mathbb{R}; \mathbb{C})} &= \big( u, \mathcal{L} v \big)_{L^2(\mathbb{R}; \mathbb{C})}\\
      &= \big( \mathcal{K}_0 u, v \big)_{L^2(\mathbb{R}; \mathbb{C})} = \big( \mathcal{K} u, v \big)_{L^2(\mathbb{R}; \mathbb{C})};
    \end{split}
  \end{equation*}
  in the latter computation we used the symbol $( \cdot, \cdot )_{H^{-1}(\mathbb{R}; \mathbb{C}) \times H^{1}(\mathbb{R}; \mathbb{C})}$ for the sesquilinear duality product in $H^{-1}(\mathbb{R}; \mathbb{C}) \times H^{1}(\mathbb{R}; \mathbb{C})$.
  Since this is true for all $v \in L^2(\mathbb{R}; \mathbb{C})$, we conclude that $\widetilde{\mathcal{K}} u = \mathcal{K} u$, which shows that $\widetilde{\mathcal{K}}$ is a compact extension of $\mathcal{K}$.

  \textit{Step~5}.
  By the results in \textit{Step~3 \& 4} the claim of the lemma is true for $r=-1$ and $r=0$. By interpolation, one can apply, e.g., \cite[Theorem~10]{CW95} with $X_0 = H^{-1}(\mathbb{R}; \mathbb{C})$, $X_1  = L^2(\mathbb{R}; \mathbb{C})$, and $W = H^{-2}(\mathbb{R}; \mathbb{C})$, it follows that also for $r \in (-1, 0)$ the map $\mathcal{K}$ gives rise to a compact operator from $H^r(\mathbb{R}; \mathbb{C})$ to $H^{r+1}(\mathbb{R}; \mathbb{C})$.
\end{proof}

\begin{proof}[Proof of Proposition~\ref{proposition_C_z_Gamma}]
  Define the operator 
  \begin{equation*} 
    \mathcal{K} := U_{\gamma} V (\sigma \cdot \nu) \mathcal{C}_z^{m, c} (\sigma \cdot \nu) V^* U_\gamma^{-1} - U_x \sigma_2 \mathcal{C}_z^{m, c, \Sigma} \sigma_2 U_x^{-1}.
  \end{equation*}
  The map $\mathcal{K}$ is an integral operator and we will do a fine analysis of the integral kernel of $\mathcal{K}$, so that we can apply Lemma~\ref{lemma_int_op_compact} for $r = -\frac{1}{2}$ to each of the entries of the $2\times 2$ block operator matrix $\mathcal{K}$ 
  to conclude the claimed result.

  Recall that $\gamma$ is given by~\eqref{arc_length_Gamma_intro}. We use for $y = (y_1, y_2) \in \mathbb{R}^2$ the notation $\boldsymbol{y} = y_1 + i y_2$, implying that $|y|_{\mathbb{R}^2} = |\boldsymbol{y}|_\mathbb{C}$. 
  Furthermore, for $s \in \mathbb{R}$ one has $\boldsymbol{t}(\gamma(s)) = \dot{\boldsymbol{\gamma}}(s)$ and $\boldsymbol{\nu}(\gamma(s)) = \dot{\gamma}_2(s) - i \dot{\gamma_1}(s) = -i \dot{\boldsymbol{\gamma}}(s)$, and thus
  \begin{equation*}
    V(\gamma(s)) = \begin{pmatrix} 1 & 0 \\ 0 & \overline{\dot{\boldsymbol{\gamma}}(s)} \end{pmatrix} \quad \text{and} \quad \sigma \cdot \nu(\gamma(s)) = \begin{pmatrix} 0 & i \overline{\dot{\boldsymbol{\gamma}}(s)} \\ -i \dot{\boldsymbol{\gamma}}(s) & 0 \end{pmatrix}.
  \end{equation*}
  Therefore, by using the definition of $U_\gamma, U_x$, and $\mathcal{C}_z^{m, c}$ we see for $u \in L^2(\mathbb{R}; \mathbb{C}^2)$ that
  \begin{equation} \label{def_K_int_op} 
    \begin{split}
      \mathcal{K} u(s) 
      &= \lim_{\varepsilon \searrow 0} \int_{\mathbb{R} \setminus B(s, \varepsilon)} k(s, t) u (t) d t,
    \end{split}
  \end{equation}
  where
  \begin{equation*} 
    \begin{split}
      k(s, t) := \begin{pmatrix} 0 & i \overline{\dot{\boldsymbol{\gamma}}(s)} \\ -i & 0 \end{pmatrix} G_z^{m, c}\big(\gamma(s) - \gamma(t)\big) \begin{pmatrix} 0 & i \\ -i \dot{\boldsymbol{\gamma}}(t) & 0 \end{pmatrix} - \sigma_2 G_z^{m, c} (s - t, 0) \sigma_2.
    \end{split}
  \end{equation*}  
  Define the number $\zeta(z) := - i \sqrt{\tfrac{z^2}{c^2} - (m c)^2}$ and note that
  \begin{equation*}
    \begin{split}
      \frac{\sigma \cdot (\gamma(s) - \gamma(t))}{|\gamma(s) - \gamma(t)|} &= \frac{1}{|\gamma(s) - \gamma(t)|} \begin{pmatrix} 0 & \overline{\boldsymbol{\gamma}(s)} - \overline{\boldsymbol{\gamma}(t)} \\ \boldsymbol{\gamma}(s) - \boldsymbol{\gamma}(t) & 0\end{pmatrix} \\
      &= |\gamma(s) - \gamma(t)| \begin{pmatrix} 0 & \frac{1}{\boldsymbol{\gamma}(s) - \boldsymbol{\gamma}(t)} \\ \frac{1}{\overline{\boldsymbol{\gamma}(s)} - \overline{\boldsymbol{\gamma}(t)}} & 0\end{pmatrix}.
    \end{split}
  \end{equation*}
  Then, by the definition of $G_z^{m, c}$ in~\eqref{def_G_lambda}, the function $k$ can be further decomposed as $k(s, t) = k_1(s, t) + k_2(s, t) + k_3(s, t)$ with
  \begin{equation*}
    \begin{split}
      k_1(s, t) &:= \frac{1}{2 \pi c} \overline{\dot{\boldsymbol{\gamma}}(s)} \big(\dot{\boldsymbol{\gamma}}(t) - \dot{\boldsymbol{\gamma}}(s))   \begin{pmatrix} \tfrac{z}{c} - m c  & 0 \\ 0 & 0 \end{pmatrix} K_0 \left(\zeta(z) | \gamma(s) - \gamma(t) |\right),\\
      k_2(s, t) &:= \frac{1}{2 \pi c} \left( \frac{z}{c} \sigma_0 - m c \sigma_3\right) \big( K_0 \left(\zeta(z) | \gamma(s) - \gamma(t) |\right) - K_0 \left(\zeta(z) | s - t |\right) \big), \\
      k_3(s, t) &:= \frac{i \zeta(z)}{2 \pi c} \Bigg( K_1 \left( \zeta(z) | s - t |\right) |s - t| \begin{pmatrix} 0 & \frac{1}{s-t} \\ \frac{1}{s-t} & 0 \end{pmatrix} \\
      &\qquad  - K_1 \left( \zeta(z) | \gamma(s) - \gamma(t) |\right) | \gamma(s) - \gamma(t) | \Bigg( \begin{matrix} 0 & \tfrac{\overline{\dot{\boldsymbol{\gamma}}(s)}}{\overline{\boldsymbol{\gamma}(s)} - \overline{\boldsymbol{\gamma}(t)}} \\ \tfrac{\dot{\boldsymbol{\gamma}}(t)}{\boldsymbol{\gamma}(s) - \boldsymbol{\gamma}(t)} & 0\end{matrix} \Bigg) \Bigg).
    \end{split}
  \end{equation*}
  Denote the corresponding integral operators by $\mathcal{K}_j$, $j \in \{ 1, 2, 3 \}$, i.e. 
  \begin{equation*}
    \mathcal{K}_j \varphi(s) = \lim_{\varepsilon \searrow 0} \int_{\mathbb{R} \setminus B(s, \varepsilon)} k_j(s, t) \varphi (t) d t,\quad s \in \mathbb{R},~\varphi \in C_0^\infty(\mathbb{R}; \mathbb{C}^2).
  \end{equation*}
  We show that the kernels $k_j$ satisfy the assumptions in Lemma~\ref{lemma_int_op_compact} implying that each of these operators is not strongly singular (i.e. the limit for $\varepsilon \searrow 0$ can be removed) and has the claimed mapping properties.

  Let $M > 0$ be the number specified in the definition of $\gamma$ in~\eqref{arc_length_Gamma_intro}. First, we note that, due to the asymptotic properties of the modified Bessel functions and their derivatives for large arguments \cite[Sections 9.6 and 9.7]{AS84} and~\eqref{equation_bi_Lipschitz}, there exist constants $M_1, \kappa_1 > 0$ such that for $j \in \{ 1,2,3 \}$ and $p \in \{ 0, 1 \}$
  \begin{equation} \label{equation_k_1_2_large_arguments}
    \max \left\{ \left|\frac{\partial^{p} k_j(s, t)}{\partial s^p }\right|, \left|\frac{\partial^{p} \overline{k_j(t, s)}}{\partial s^p }\right| \right\} \leq M_1 e^{\kappa_1 |s - t|}, \quad |s-t| \geq M.
  \end{equation}
  
  Next, taking the power series representation of $K_0$ from \cite[equation~9.6.13]{AS84} into account, there exist a constant $M_2$ and holomorphic functions $g_1, g_2$ such that
  \begin{equation} \label{decomposition_K_0}
    K_0(\xi) = - \log \xi + M_2 + g_1(\xi^2) + \xi^2 g_2(\xi^2) \log \xi = - \log \xi + g_3(\xi), \quad \xi \in \mathbb{C} \setminus \{ 0 \},
  \end{equation}
  with the $C^1$-smooth function $g_3(\xi) := M_2 + g_1(\xi^2) + \xi^2 g_2(\xi^2) \log \xi$; cf. the proof of \cite[Lemma~3.2]{BHOP20} for a similar consideration. 
  We will often use that the function
  \begin{equation} \label{equation_log_smooth}
    \log\big( \zeta(z) |\gamma(s) - \gamma(t)| \big) - \log \big( \zeta(z) |s - t| \big) = \log\left| \frac{\boldsymbol{\gamma}(s) - \boldsymbol{\gamma}(t)}{s-t} \right|
  \end{equation}
  is $C^\infty$-smooth. For $s \neq t$, this follows from the injectivity and the smoothness of $\gamma$, and for $s=t$ from a Taylor series expansion of $\boldsymbol{\gamma}$ and~\eqref{equation_bi_Lipschitz}.

  In the following, the properties of $\mathcal{K}_1$ and its kernel $k_1$ are further analyzed.
  With~\eqref{decomposition_K_0} and~\eqref{equation_log_smooth} it is not difficult to see that $k_1$ can be written as
  \begin{equation*}
    \begin{split} 
      k_1(s, t) &= \frac{1}{2 \pi c} \overline{\dot{\boldsymbol{\gamma}}(s)} \big(\dot{\boldsymbol{\gamma}}(t) - \dot{\boldsymbol{\gamma}}(s)\big)   \begin{pmatrix} \tfrac{z}{c} - m c  & 0 \\ 0 & 0 \end{pmatrix} \\
      &\qquad \cdot \left( - \log \big(\zeta(z) | s - t |\big) - \log\left| \frac{\boldsymbol{\gamma}(s) - \boldsymbol{\gamma}(t)}{s-t} \right| + g_3 \big(\zeta(z) | \gamma(s) - \gamma(t) |\big) \right).
    \end{split}
  \end{equation*}  
  Since $\boldsymbol{\gamma}$ is smooth, the function $(\dot{\boldsymbol{\gamma}}(t) - \dot{\boldsymbol{\gamma}}(s)) \log (\zeta(z) | s - t |)$ is continuous in $\mathbb{R}^2$, differentiable for $s \neq t$, and its first derivatives are bounded by $ \widetilde{M}_1 \big|\log |s - t| \big| + \widetilde{M}_2$ for some constants $\widetilde{M}_1, \widetilde{M}_2 > 0$.
  Hence, $k_1$ is continuous in $\mathbb{R}^2$ and differentiable for $s \neq t$, for $p \in \{ 0, 1 \}$ one has 
  \begin{equation} \label{estimate_k_1} 
    \max \left\{ \left|\frac{\partial^{p} k_1(s, t)}{\partial s^p }\right|, \left|\frac{\partial^{p} \overline{k_1(t, s)}}{\partial s^p }\right| \right\} \leq M_3 \big|\log |s - t| \big| + M_4, \quad |s|, |t| \leq 2 M,
  \end{equation}
  where $M_3, M_4$ are positive constants, and $k_1(s, t) = 0$, if $s, t < -M$ or $s, t > M$, as in that case $\dot{\boldsymbol{\gamma}}(s) = \dot{\boldsymbol{\gamma}}(t)$. Since $k_1$ fulfils~\eqref{equation_k_1_2_large_arguments}, this function satisfies the assumptions in Lemma~\ref{lemma_int_op_compact} and thus, $\mathcal{K}_1: H^{-1/2}(\mathbb{R}; \mathbb{C}^2) \rightarrow H^{1/2}(\mathbb{R}; \mathbb{C}^2)$ is compact.
  
  Next, $\mathcal{K}_2$ and its kernel $k_2$ are considered.
  With~\eqref{decomposition_K_0} and~\eqref{equation_log_smooth} one finds that $k_2 \in C^1(\mathbb{R}^2; \mathbb{C}^2)$. In particular, this shows for a constant $M_5 > 0$ that
  \begin{equation*}
    \max \left\{ \left|\frac{\partial^{p} k_2(s, t)}{\partial s^p }\right|, \left|\frac{\partial^{p} \overline{k_2(t, s)}}{\partial s^p }\right| \right\} \leq M_5, \quad |s|, |t| \leq 2 M.
  \end{equation*}
  Finally, since for $s, t \geq M$ 
  \begin{equation} \label{equation_absolute_value_gamma}
    |\gamma(s) - \gamma(t)| = \left| \begin{pmatrix} a_+ \\  c_+ \end{pmatrix} (s-t) \right| = |s-t|
  \end{equation}
  holds, we get that $k_2(s, t) = 0$, if $s, t \geq M$. A similar consideration can be done also for $s, t \leq -M$. With~\eqref{equation_k_1_2_large_arguments} one sees that $k_2$ satisfies~\eqref{assumption_k} and hence, by Lemma~\ref{lemma_int_op_compact} the map $\mathcal{K}_2: H^{-1/2}(\mathbb{R}; \mathbb{C}^2) \rightarrow H^{1/2}(\mathbb{R}; \mathbb{C}^2)$ is compact. 
  
  It remains to analyze $\mathcal{K}_3$ and its kernel $k_3$. By the power series representation of $K_1$ in \cite[equation~9.6.11]{AS84} there exist holomorphic functions $g_4, g_5$ such that
  \begin{equation}  \label{eq:K_1_exp}
    K_1(\xi) = \frac{1}{\xi} + \xi g_4(\xi^2) \log \xi + \xi g_5(\xi^2), \quad \xi \in \mathbb{C} \setminus \{ 0 \};
  \end{equation}
  cf. the proof of \cite[Lemma~3.2]{BHOP20} for a similar consideration.
  Hence, we can further decompose $k_3(s, t)$ as $k_3(s, t) = k_4(s, t) + k_5(s, t) + k_6(s, t)$ with
  \begin{equation*}
    \begin{split}
      k_4(s, t) &:= \frac{i}{2 \pi c} \begin{pmatrix} 0 & \frac{1}{s-t} \\ \frac{1}{s-t} & 0 \end{pmatrix} - \frac{i}{2 \pi c} \begin{pmatrix} 0 & \frac{\overline{\dot{\boldsymbol{\gamma}}(s)}}{\overline{\boldsymbol{\gamma}(s)} - \overline{\boldsymbol{\gamma}(t)}} \\ \frac{\dot{\boldsymbol{\gamma}}(t)}{\boldsymbol{\gamma}(s) - \boldsymbol{\gamma}(t)} & 0\end{pmatrix}, \\
      k_5(s, t) &:= \frac{i \zeta(z)^2}{2 \pi c} \left( g_4(\zeta(z)^2 | s - t |^2) \log (\zeta(z) | s - t |) \begin{pmatrix} 0 & s-t \\ s-t & 0 \end{pmatrix}\right. \\
      &\qquad \qquad \qquad -  g_4(\zeta(z)^2 | \gamma(s) - \gamma(t) |^2) \log (\zeta(z) | \gamma(s) - \gamma(t) |) \\
      &\qquad \qquad\qquad \qquad \cdot \left. \begin{pmatrix} 0 & \overline{\dot{\boldsymbol{\gamma}}(s)} (\boldsymbol{\gamma}(s) - \boldsymbol{\gamma}(t)) \\ \dot{\boldsymbol{\gamma}}(t)(\overline{\boldsymbol{\gamma}(s)} - \overline{\boldsymbol{\gamma}(t)}) & 0\end{pmatrix} \right),\\
      k_6(s, t) &:= \frac{i \zeta(z)^2}{2 \pi c} \left( g_5(\zeta(z)^2 | s - t |^2) \begin{pmatrix} 0 & s-t \\ s-t & 0 \end{pmatrix}\right. \\
      &\qquad  - \left. g_5(\zeta(z)^2 | \gamma(s) - \gamma(t) |^2)  \begin{pmatrix} 0 & \overline{\dot{\boldsymbol{\gamma}}(s)} (\boldsymbol{\gamma}(s) - \boldsymbol{\gamma}(t)) \\ \dot{\boldsymbol{\gamma}}(t)(\overline{\boldsymbol{\gamma}(s)} - \overline{\boldsymbol{\gamma}(t)}) & 0\end{pmatrix} \right).
    \end{split}
  \end{equation*}
  Since $g_5$ is analytic, we observe that $k_6$ can be extended to a $C^\infty$-smooth function in $\mathbb{R}^2$. The same is also true for $k_4$. Indeed, employing the fundamental theorem of calculus twice, we get
  \begin{equation*}
    \begin{split}
      \frac{1}{s-t} - \frac{\dot{\boldsymbol{\gamma}}(t)}{\boldsymbol{\gamma}(s) - \boldsymbol{\gamma}(t)}  &= \frac{\boldsymbol{\gamma}(s) - \boldsymbol{\gamma}(t) - \dot{\boldsymbol{\gamma}}(t) (s-t)}{(s-t)(\boldsymbol{\gamma}(s) - \boldsymbol{\gamma}(t))} \\
      &= \frac{\int_0^1 \dot{\boldsymbol{\gamma}}(t + \xi (s-t)) (s-t) d \xi - \dot{\boldsymbol{\gamma}}(t) (s-t)}{(s-t)(\boldsymbol{\gamma}(s) - \boldsymbol{\gamma}(t))} \\
      &= \int_0^1 \int_0^1 \ddot{\boldsymbol{\gamma}}(t + \zeta \xi (s-t)) \xi  d \zeta d \xi \cdot \frac{s - t}{\boldsymbol{\gamma}(s) - \boldsymbol{\gamma}(t)}.
    \end{split}
  \end{equation*}
  The first factor in the last line is a smooth function everywhere in $\mathbb{R}^2$, and due to~\eqref{equation_bi_Lipschitz} the same is true also for the second factor. Therefore, $k_4$ can be extended to a $C^\infty$-function at $s=t$.
  
  Next, we analyze the properties of $k_5(s, t)$ for $s, t$ being close to each other. In a very similar way as in the consideration of $k_1$ one gets that $k_5$ is continuous everywhere in $\mathbb{R}^2$ and that there exist  constants $M_6, M_7>0$ such that for 
  $p \in \{ 0, 1 \}$
  \begin{equation*} 
    \max \left\{ \left|\frac{\partial^{p} k_5(s, t)}{\partial s^p }\right|, \left|\frac{\partial^{p} \overline{k_5(t, s)}}{\partial s^p }\right| \right\} \leq M_6 \big|\log|s-t|\big| +M_7, \quad |s|,|t| \leq 2 M,
  \end{equation*}
  holds; cf.~\eqref{estimate_k_1}. Since $k_4$ and $k_6$ are smooth functions and $k_3 = k_4 + k_5 + k_6$, we conclude that there exist  constants $M_8, M_9>0$ such that for 
  $p \in \{ 0, 1 \}$
  \begin{equation} \label{equation_k_3_small_arguments}
   \max \left\{ \left|\frac{\partial^{p} k_3(s, t)}{\partial s^p }\right|, \left|\frac{\partial^{p} \overline{k_3(t, s)}}{\partial s^p }\right| \right\} \leq M_8 \big|\log|s-t|\big| +M_9, \quad |s|, |t| \leq 2 M,
  \end{equation}
  is satisfied.
  
  Finally, we show that $k_3(s, t) = 0$ for $s, t > M$ or $s, t < -M$. Note that for $s, t \geq M$ we have~\eqref{equation_absolute_value_gamma}
  and 
\begin{equation*}
\frac{\dot{\boldsymbol{\gamma}}(t)}{\boldsymbol{\gamma}(s)-\boldsymbol{\gamma}(t)}=\frac{a_+ +ic_+}{a_+(s-t)+ic_+(s-t)}=\frac{1}{s-t}.
\end{equation*}
Similar identities are also true for $s, t \leq -M$. Hence, we conclude from the definition of $k_3$ that $k_3(s, t) = 0$ for $s, t > M$ or $s, t < -M$. Combining this with~\eqref{equation_k_1_2_large_arguments} and~\eqref{equation_k_3_small_arguments}, it follows from Lemma~\ref{lemma_int_op_compact} that also $\mathcal{K}_3: H^{-1/2}(\mathbb{R}; \mathbb{C}^2) \rightarrow H^{1/2}(\mathbb{R}; \mathbb{C}^2)$ is compact. Together with the mapping properties of $\mathcal{K}_1$ and $\mathcal{K}_2$ shown above this yields that 
\begin{equation*}
  \mathcal{K} = \mathcal{K}_1 + \mathcal{K}_2 + \mathcal{K}_3: H^{-1/2}(\mathbb{R}; \mathbb{C}^2) \rightarrow H^{1/2}(\mathbb{R}; \mathbb{C}^2)
\end{equation*}
is a compact operator. The proof of Proposition~\ref{proposition_C_z_Gamma} is complete.
\end{proof}
    
\end{appendix}




\begin{thebibliography}{99}

\bibitem{AS84}
M.~Abramowitz, I.~Stegun:
{\it Handbook of mathematical functions with formulas, graphs, and
  mathematical tables}.
Wiley, Washington, D.C., 1984. 

\bibitem{AGHH05}
S.~Albeverio, F.~Gesztesy, R.~H\o egh-Krohn, H.~Holden: 
\emph{Solvable Models in Quantum Mechanics}.
2nd edition, AMS Chelsea Publishing, Providence, R.I., 2005.


\bibitem{AMV14}
N.~Arrizabalaga, A.~Mas, L.~Vega:
{\it Shell interactions for {D}irac operators.}
J.~Math. Pures Appl. 102 (2014) 617--639.

\bibitem{AMV15}
N.~Arrizabalaga, A.~Mas, L.~Vega:
{\it Shell interactions for {D}irac operators: on the point spectrum and  the confinement.}
SIAM J. Math. Anal. 47 (2015) 1044--1069.


\bibitem{Be08}
C.W.J.~Beenakker: 
{\em Colloquium: Andreev reflection and Klein tunneling in graphene.}
Rev. Mod. Phys. 80 (2008) 1337-1354.

\bibitem{BEHL17}
J.~Behrndt, P.~Exner, M. Holzmann, V.~Lotoreichik:
{\em Approximation of Schrödinger operators with $\delta$-interactions supported on hypersurfaces}.
Math. Nachr. 290 (2017) 1215--1248.

\bibitem{BEHL18}
J.~Behrndt, P.~Exner, M.~Holzmann, V.~Lotoreichik:
{\it On the spectral properties of Dirac operators with electrostatic {$\delta$}-shell interactions.}
J. Math. Pures Appl. 111 (2018) 47--78.

\bibitem{BEHL19_1}
J.~Behrndt, P.~Exner, M. Holzmann, V.~Lotoreichik:
{\em On Dirac operators in $\mathbb{R}^3$ with electrostatic and Lorentz scalar $\delta$-shell interactions}.
Quantum Stud. Math. Found.  6 (2019) 295--314.

\bibitem{BHS19}
J.~Behrndt, S.~Hassi, H.S.V.~de Snoo: {\it Boundary Value Problems, Weyl Functions, and Differential Operators}.
Monographs in Mathematics 108, Birkh\"{a}user/Springer, Cham, 2020. 

\bibitem{BH19}
J.~Behrndt, M. Holzmann:
{\em On Dirac operators with electrostatic $\delta$-shell interactions of critical strength}.
J. Spectr. Theory 10(1) (2020) 147--184.

\bibitem{BHOP20}
J. Behrndt, M. Holzmann, T. Ourmi{\`e}res-Bonafos, K. Pankrashkin:
{\em Two-dimensional Dirac operators with singular interactions supported on closed curves}.
J. Funct. Anal. 279 (2020) 108700 (47 pages).

\bibitem{BHS23}
J.~Behrndt, M.~Holzmann, C. Stelzer:
{\em Approximation of Dirac operators with $\boldsymbol{\delta}$-shell potentials in the norm resolvent sense}.
Preprint: arXiv:2308.13344, 2023.

\bibitem{BHSS22}
J.~Behrndt, M.~Holzmann, C. Stelzer, G. Stenzel:
{\em Boundary triples and Weyl functions for Dirac operators with singular interactions}.
To appear in Rev. Math. Phys. (2023).

\bibitem{BHS23_1}
J.~Behrndt, M.~Holzmann, G. Stenzel:
{\em Schr\"odinger operators with oblique transmission conditions in $\mathbb{R}^2$}.
Comm. Math. Phys. 401 (2023) 3149--3167.

\bibitem{BHT22} J. Behrndt, M. Holzmann, M. Tu\v sek:
{\em Spectral transition for Dirac operators with electrostatic $\delta$-shell potentials supported on the straight line.}
Integral Equations Operator Theory 94 (2022) 33 (13 pages).

\bibitem{BHT23} J. Behrndt, M. Holzmann, M. Tu\v sek:
{\em Two-dimensional Dirac operators with general $\delta$-shell interactions supported on a straight line}.
J. Phys. A 56 (2023) 045201 (29 pages).


\bibitem{BFSV17}
R.~D.~Benguria, S.~Fournais, E.~Stockmeyer, H.~Van~Den~Bosch:
\emph{Self-adjointness of two-dimensional Dirac operators on domains.}
Ann. Henri Poincar\'e 18 (2017) 1371--1383.

\bibitem{Ben21}
B. Benhellal:
{\em Spectral properties of the Dirac operator coupled with $\delta$-shell interactions}.
Lett. Math. Phys. 112(6) (2022) 52 (52 pages).

\bibitem{BoPi_21}
T.~Ourmières-Bonafos, F.~Pizzichillo: {\em Dirac Operators and Shell Interactions: A Survey.} In: Michelangeli, A. (eds) Mathematical Challenges of Zero-Range Physics, Springer INdAM Series, vol 42, Springer, 2021.

\bibitem{BEKS94} 
J.~Brasche, P.~Exner, Y.~Kuperin, P. \v Seba:
{\em Schr\"odinger operators with singular interactions}.
J. Math. Anal. Appl. 184 (1994) 112--139.

\bibitem{BGP}
J.~Br\"uning, V.~Geyler, K.~Pankrashkin:
{\it Spectra of self-adjoint extensions and applications to solvable Schr\"odinger operators}.
Rev. Math. Phys. 20 (2008) 1--70.

\bibitem{CLMT21}
B.~Cassano, V.~Lotoreichik, A.~Mas, M.~Tu\v{s}ek:
{\em General $\delta$-shell interactions for the two-dimensional {D}irac operator: self-adjointness and approximation}.
Rev. Mat. Iberoam. 39 (2023), 1443--1492.

\bibitem{CJJTW74}
A.~Chodos, R.L.~Jaffe, K.~Johnson, C.B.~Thorn, and V.F.~Weisskopf:
{\em New extended model of hadrons}.
Phys. Rev. D (3) 9(12) (1974) 3471--3495.

\bibitem{CW95}
M. Cwikel, N. J. Kalton:
{\em Interpolation of compact operators by the methods of {C}alder\'{o}n and {G}ustavsson-{P}eetre}.
Proc. Edinburgh Math. Soc. (2) 38 (2) (1995) 261--276.

\bibitem{DM91}
V.A.~Derkach, M.M.~Malamud:
\emph{Generalized resolvents and the boundary value problems for
  {H}ermitian operators with gaps}.
J. Funct. Anal. 95 (1991) 1--95.

\bibitem{DM95}
V.A.~Derkach, M.M.~Malamud:
{\it The extension theory of Hermitian operators and the moment problem}.
J. Math. Sciences 73 (1995) 141--242.

\bibitem{DES89}
J.~Dittrich, P.~Exner, P.~{\v{S}}eba:
{\em Dirac operators with a spherically symmetric {$\delta$}-shell interaction}.
J. Math. Phys. 30(12) (1989) 2875--2882.



\bibitem{EI01}
P.~Exner, T.~Ichinose:
{\em Geometrically induced spectrum in curved leaky wires}.
J. Phys. A, Math. Gen. 34 (2001) 1439--1450.

\bibitem{EK15}
P.~Exner and H.~Kova{\v{r}}{\'{\i}}k:
\newblock {\em Quantum Waveguides}.
\newblock Theoretical and Mathematical Physics. Springer, Cham, 2015.

\bibitem{FHL23}
D. Frymark, M. Holzmann, V. Lotoreichik:
{\em Spectral analysis of the Dirac operator with a singular interaction on a broken line}.
Preprint: arXiv:2306.04976, 2023.

\bibitem{FL23}
D.~Frymark, V.~Lotoreichik:
{\em Self-adjointness of the
2D Dirac operator with singular interactions supported on star graphs}.
Ann. Henri Poincaré 24 (2023) 179–221.

\bibitem{GS87}
F.~Gesztesy, P.~{\v{S}}eba:
{\em New analytically solvable models of relativistic point interactions}.
Lett. Math. Phys. 13(4) (1987) 345--358.

\bibitem{G14}
L.~Grafakos: 
{\it Modern Fourier Analysis}.
Graduate Texts in Mathematics 250, Springer-Verlag New York, 2014. 

\bibitem{GL06}
M. J. Gruber, M. Leitner:
{\em Spontaneous edge currents for the {D}irac equation in two space dimensions}.
Lett. Math. Phys. 75 (2006) 25--37.

\bibitem{J05}
J.~Jost: {\it Riemannian Geometry and Geometric Analysis}.
Universitext, Springer-Verlag, Berlin, 2005. 

\bibitem{K95}
	 T.~Kato.
	 {\em Perturbation Theory for Linear Operators.}
	 \newblock Classics in Mathematics. Springer-Verlag, Berlin, 1995.
	 \newblock Reprint of the 1980 edition.

\bibitem{OV18}
T.~Ourmi{\`e}res-Bonafos, L.~Vega:
{\em A strategy for self-adjointness of {D}irac operators: application to the {MIT} bag model and $\delta$-shell interactions}. 
Publ. Mat. 62 (2018) 397--437.

\bibitem{P17} K. Pankrashkin:
{\it Variational proof of the existence of eigenvalues for star  graphs}, in ``Functional
Analysis and Operator Theory for Quantum Physics. The Pavel Exner Anniversary Volume'', EMS
Series of Congress Reports, ed. J. Dittrich, H. Kova\v{r}ik and A. Laptev, EMS Publishing
House, Berlin (2018), 447--458.

\bibitem{PvdB22} 
F.~Pizzichillo, H.~Van Den Bosch:
{\em Self-adjointness of two-dimensional Dirac operators on corner domains}.
J.~Spectr.~Theory 11 (2021) 1043--1079.

\bibitem{P04} 
A. Posilicano: 
\emph{Boundary triples and Weyl functions for singular perturbations of self-adjoint operators.}
Meth. Funct. Anal. Topol. 10 (2004) 57--63. 

\bibitem{P08} 
A. Posilicano: 
\emph{Self-adjoint extensions of restrictions.} 
Oper. Matr. 2 (2008) 483--506.

\bibitem{R21}
V.~Rabinovich:
{\em Two-Dimensional Dirac Operators with Interactions on Unbounded Smooth Curves}.
Russ. J. Math. Phys. 28 (2021) 524--542.

\bibitem{R22}
V.~Rabinovich:
{\em Dirac Operators with Delta-Interactions on Smooth Hypersurfaces in $\mathbb{R}^n$}. 
J. Fourier Anal. Appl. 28(2) (2022) 20 (26 pages).

\bibitem{RSI}
M.~Reed, B.~Simon:
\textit{Methods of Modern Mathematical Physics I. Functional Analysis}.
Academic Press, 1972.


\bibitem{T92}
B.~Thaller:
\emph{The {D}irac equation.}
Texts and Monographs in Physics. Springer-Verlag, Berlin, 1992.

\bibitem{W00}
J.~Weidmann:
\emph{Lineare Operatoren in Hilbertr\"aumen. Teil 1.}
Teubner, Stuttgart, 2000. 

\bibitem{Weidmann}
J.~Weidmann:
\emph{Linear Operators in Hilbert Space}
Springer-Verlag, New York, 1980.

\end{thebibliography}
\end{document}